\newtheorem{thm}{Theorem}[section]
\newtheorem{lem}[thm]{Lemma}
\newtheorem{prop}[thm]{Proposition}
\newtheorem{exam}[thm]{Example}
\theoremstyle{definition}
\newtheorem{defi}[thm]{Definition}
\newtheorem{rema}[thm]{Remark}
\numberwithin{equation}{section}
\newcommand{\E}{{\mathcal E}}
\newcommand{\Real}{\mathbb{R}}
\newcommand{\R}{\Real}
\newcommand{\Z}{{\mathbb Z}}
\def\supp{{\textup{\textrm{supp}}}}
\DeclareMathOperator{\esssup}{ess\,sup}
\newcommand{\Rmnum}[1]{\expandafter\@slowromancap\romannumeral #1@}
\begin{document}
	\baselineskip=17pt
	\setcounter{figure}{0}
	
	\title[Differential equations]
	{Differential equations defined by Kre\u{\i}n-Feller operators on Riemannian manifolds}
	\date{\today}
	\author[S.-M. Ngai]{Sze-Man Ngai}
	\address{Beijing Institute of Mathematical Sciences and Applications, Huairou District, 101400, Beijing,  China,  and Key Laboratory of High Performance Computing and Stochastic
		Information Processing (HPCSIP) (Ministry of Education of China), College of
		Mathematics and Statistics, Hunan Normal University, Changsha, Hunan
		410081,  China.}
	\email{ngai@bimsa.cn}
	
	\author[L. Ouyang]{Lei Ouyang}
	\address{Key Laboratory of High Performance Computing and Stochastic Information
		Processing (HPCSIP) (Ministry of Education of China, College of Mathematics and Statistics, Hunan Normal University, Changsha, Hunan 410081, China.}
	\email{ouyanglei@hunnu.edu.cn}

	\subjclass[2010]{Primary: 28A80; Secondary: 35D30, 35K05, 35Q41, 35L05.}
	\keywords{Riemannian manifold; Kre\u{\i}n-Feller operator; wave equation; heat equation; Schr\"odinger equation.}

	\thanks{The authors are supported in part by the National Natural Science Foundation of China, grant 12271156, and Construct Program of the Key Discipline in Hunan Province. The second author was supported in part by Beijing Institute of Mathematical Sciences and Applications (BIMSA)}

	\begin{abstract}
	We study linear and semi-linear wave, heat, and Schr\"odinger equations defined by Kre\u{\i}n-Feller operator $-\Delta_\mu$ on a complete Riemannian $n$-manifolds $M$, where $\mu$ is a finite positive  Borel measure on a bounded open subset $\Omega$ of $M$ with support contained in $\overline{\Omega}$. Under the assumption that $\underline{\operatorname{dim}}_{\infty}(\mu)>n-2$, we prove that for a linear or semi-linear equation of each of the above three types, there exists a unique weak  solution. We study the crucial condition $\dim_(\mu)>n-2$ and provide examples of measures on $\mathbb{S}^2$ and $\mathbb{T}^2$ that satisfy  the condition. We also study weak solutions of linear equations of the above three classes by using examples on $\mathbb{S}^1$.
		\tableofcontents
	\end{abstract}
	
	\maketitle
	\section{Introduction}\label{S:IN}
	\setcounter{equation}{0}
The Kre\u{\i}n-Feller operator, studied independently by Kre\u{\i}n and Feller, is of particular interest in fractal geometry. It is a natural generalization of the Laplace or Laplace-Beltrami operator, and has been applied successfully to study analytic properties of certain fractal measures. Let $\mu$ be a finite positive  Borel measure on a bounded open subset $\Omega$ of $\R^n$ with support contained in $\overline{\Omega}$.  If $\mu$ satisfies the Poincar\'e inequality on $\Omega$, then there exists a  Laplacian $\Delta_\mu$ defined by $\mu$. $\Delta_\mu$ is also called a Kre\u{\i}n-Feller operator (see definition in Section \ref{S:Pre}) and  has been studied extensively in connection with fractal geometry, such as existence of an orthonormal basis of eigenfunctions, spectral dimension and spectral asymptotics, eigenvalues and eigenfunctions, eigenvalue estimates, wave equations and wave speed, heat equation and heat kernel estimates, Schr\"odinger equation (see \cite{Hu-Lau-Ngai_2006, Freiberg_2003, Freiberg_2005, Freiberg_2011, Freiberg-Zahle_2002, Gu-Hu-Ngai_2020, Naimark-Solomyak_1994, Naimark-Solomyak_1995, Ngai_2011, Ngai-Tang-Xie_2018, Ngai-Xie_2020, Ngai-Xie_2021, Bird-Ngai-Teplyaev_2003,  Deng-Ngai_2015, Pinasco-Scarola_2019, Chan-Ngai-Teplyaev_2015, Pinasco-Scarola_2021, Kessebohmer-Niemann_2022-2,Tang-Ngai_2022,Kessebohmer-Niemann_2022-3} and references therein).  
	Chan {\em et al.}\cite{Chan-Ngai-Teplyaev_2015} studied approximations of the solution of the wave equation defined by a one-dimensional fractal Laplacian. Tang and Wang \cite{Tang-Wang_2022} proved the existence and uniqueness of weak solution of the strong damping linear wave equation.
		Tang and Ngai \cite{Tang-Ngai_2022} studied the heat equation on a bounded open set $U\subseteq\R^n$ supporting a Borel measure and
		obtained asymptotic bounds for the solution.
		For a Schr\"odinger operator defined by a fractal measure with a continuous
		potential and a coupling parameter, Ngai and Tang \cite{Ngai-Tang_2019}  obtained an analog of a semiclassical asymptotic formula for the number of bound states as the parameter tends
		to infinity. 
		Under the assumption that the Laplacian has compact resolvent, Ngai and Tang \cite{Ngai-Tang_2023} proved that there exists a unique weak solution for a linear Schr\"odinger equation, and obtained the existence and uniqueness of a weak solution of a semi-linear Schr\"odinger equation.

Fractal phenomena on manifolds have been observed by physicists  (see, e.g., \cite{Ambjorn-Jurkiewicz-Loll_2005,Calcagni_2010, Calcagni_2011,Benedetti-Henson_2009}); this partly motivated our work.
	In this paper, we let $M$ be a complete oriented smooth Riemannian $n$-manifold, let  $\Omega\subseteq M$ be a bounded open set, and let $\mu$ be a finite positive  Borel measure on $M$ such that ${\rm supp}(\mu) \subseteq \overline{\Omega}$ and $\mu(\Omega)>0$. Under the assumption $\underline{\operatorname{dim}}_{\infty}(\mu)>n-2$ (see definition in \eqref{eq:dim})
	the
	authors \cite{Ngai-Ouyang_2023,Ngai-Ouyang_2024} proved that a Kre\u{\i}n-Feller operator defined on $M$ has compact resolvent so that there exists an orthonormal basis of $L^{2}(\Omega, \mu)$ consisting of  eigenfunctions of $\Delta_{\mu}$. They also proved the Hodge theorem concerning the eigenvalues and eigenfunctions of the Kre\u{\i}n-Feller operator and generalization to the space of differential forms. The first objective of this paper is to study the solution of the following semi-linear wave equation defined by a Kre\u{\i}n-Feller operator on $M$, subject to the specified initial and boundary conditions:
	\begin{align}\label{eq:wnonlinear}
		\left\{
		\begin{array}{lll}
			\partial_{tt}u-\Delta_\mu u=F(u)\quad &\text{on}\,\,\,\Omega\times[0,T],\\
			u=0\quad  &\text{on}\,\,\,\partial\Omega\times[0,T],\\
			u=g,\,\,\partial_{t}u=h\quad  &\text{on}\,\,\,\Omega\times\{t=0\},
		\end{array}
		\right.
	\end{align}
	where $u:=u(t)$ is an  $L^2([0,T],\operatorname{dom}\mathcal{E})$-valued function of $t$ and $F(\cdot)\in {\rm Lip}(\operatorname{dom}\mathcal{E})$. The notations $\operatorname{dom}\mathcal{E}$, ${\rm Lip}(\operatorname{dom}\mathcal{E})$, and $L^2([0,T], \operatorname{dom}\mathcal{E})$ are defined in Section \ref{S:Pre}. Throughout this paper, if $\partial\Omega=\emptyset$, it is understood that the condition $u=0$ on $\partial\Omega\times[0,T]$ imposes no restriction on the solution. The existence and uniqueness of weak solution of equation \eqref{eq:wnonlinear} will be proved in Theorem \ref{thm:wmain2}.

	The second objective of this paper is to study the solution of the following semi-linear heat equation defined by $\Delta_\mu$:
	\begin{align}\label{eq:hnonlinear}
		\left\{
		\begin{array}{lll}
			\partial_tu-\Delta_\mu u=F(u)\quad &\text{on}\,\,\,\Omega\times[0,T],\\
			u=0\quad  &\text{on}\,\,\,\partial\Omega\times[0,T],\\
			u=g\quad  &\text{on}\,\,\,\Omega\times\{t=0\}.
		\end{array}
		\right.
	\end{align}
The existence and uniqueness of weak solution of this  equation will be described in Theorem \ref{thm:hmain2}.

	The third objective of this paper is to study the solution of the following semi-linear Schr\"odinger equation defined by $\Delta_\mu$:
	\begin{align}\label{eq:snonlinear}
			\left\{
			\begin{array}{lll}
				i\partial_tu+\Delta_\mu u=F(u)\quad &\text{on}\,\,\,\Omega\times[0,T],\\
				u=0\quad  &\text{on}\,\,\,\partial\Omega\times[0,T],\\
				u=g\quad  &\text{on}\,\,\,\Omega\times\{t=0\}.
			\end{array}
			\right.
	\end{align}
The existence and uniqueness of weak solution of  equation \eqref{eq:snonlinear} will be described in Theorem \ref{thm:smain2}.

	We also give two classes of measures that satisfy the condition $\underline{\operatorname{dim}}_{\infty}(\mu)>n-2$.  They are the invariant measures $\mu$ of iterated function systems (IFS) on $\mathbb{S}^2$ consisting of bi-Lipschitz mappings (Example \ref{exam1}), and graph iterated function systems (GIFS) of similitudes on the flat torus $\mathbb{T}^2$ (Example \ref{exam2}).  We also study weak solutions of linear equations of the above three classes by using examples  constructed on $\mathbb{S}^1$.
	
	This paper is organized as follows. In Section \ref{S:Pre}, we summarize some definitions and results that will be needed throughout the paper.  The
		existence and uniqueness of weak solutions of the wave, heat, and Schr\"odinger equations are studied in Sections \ref{S:wave}, \ref{S:heat}, and \ref{S:sch}, respectively.  In Section \ref{S:exam1}, we give an example of the invariant measures $\mu$ of iterated function systems (IFS) consisting bi-Lipschitz mappings on $\mathbb{S}^2$ that satisfy $\underline{\operatorname{dim}}_{\infty}(\mu)>0$. In Section \ref{S:exam2}, we give another class of the invariant measures $\mu$ of graph iterated function systems (GIFS) of similitudes on the torus $\mathbb{T}^2$ that satisfy $\underline{\operatorname{dim}}_{\infty}(\mu)>0$. Section \ref{S:So} illustrates weak solutions of the linear wave, heat, and Schr\"odinger equations by using examples.

	\section{Preliminaries}\label{S:Pre}
	\setcounter{equation}{0}
	In this section, we summarize some notation, definitions, and  preliminary results used
	throughout the rest of the paper. 
	\subsection{Notation}\label{s:no}

	\begin{defi}\label{defi:diffe}
		Let $X$ be a Banach space, $u: (a,b)\subseteq\R \to X$, and $t_0\in(a,b)$. We say that
			$u$ is {\em differentiable} at $t_0$ in the norm $\|\cdot\|_X$ if there exists $v_0\in X$ such that
		\begin{align*}
			\lim_{h\to 0}	\Big\|\frac{u(t_0+h)-u(t_0)}{h}-v_0\Big\|_{X}=0.
		\end{align*}
		$v_0$ is called the {\em derivative} of $u$ at $t_0$, and we write
		\begin{align*}
			v_0:=\partial_tu(t_0):=\lim_{h\to 0}	\frac{u(t_0+h)-u(t_0)}{h}.
		\end{align*}
		Similarly, the {\em second-order derivative} of $u$ at $t_0$ denoted $v_1$, is defined as
		\begin{align*}
			v_1:=\partial_{tt}u(t_0):=\lim_{h\to 0}	\frac{\partial_tu(t_0+h)-\partial_tu(t_0)}{h}.
		\end{align*}
	\end{defi}

	\begin{defi}\label{defi:l^P}(see \cite{Chan-Ngai-Teplyaev_2015})
		Let $X$ be a separable Banach space with norm $\|\cdot\|_X$. Let 
			$L^p([0,T],X)$ be the space of all measurable functions $u :[0, T ]\to X$ satisfying
		\begin{enumerate}
			\item[(1)] $\|u\|_{L^p([0,T],X)}:=\Big(\int_0^T\|u(t)\|_X^p\,dt\Big)^{1/p}< \infty$, if $1\leq p<\infty$, and
			\item[(2)] $\|u\|_{L^\infty([0,T],X)}:= \mathop{\esssup}\limits_{0\leq t\leq T}\|u(t)\|_X< \infty$, if $p=\infty$.
		\end{enumerate}	
	When no confusion is possible, we abbreviate these norms as $\|u\|_{p,X}$  and $\|u\|_{\infty,X}$.
	\end{defi}

	\begin{rema}\label{rema:bana}
		For each $1 \leq p \leq \infty$,	$L^p([0,T],X)$ is a Banach space. Moreover, if $0 \leq p_1 \leq p_2 \leq \infty$, then
		$L^{p_1}([0,T],X)\subseteq L^{p_2} ([0, T],X)$.  If $(X,\langle\cdot,\cdot\rangle_X )$ is a separable Hilbert space, then $L^2([0,T], X)$
		is a Hilbert space with the inner product
		\begin{align*}
			\langle u,v\rangle_{L^2([0,T], X)}:=\int_0^T\langle u(t), v(t)\rangle_X\,dt
		\end{align*}
		(see, e.g., \cite{Adams_1975,Wloka_1987}).
	\end{rema}
	
	\begin{defi}\label{defi:c}
		Let $X$ be a Banach space. We define $C([0,T],X)$ as {\em the vector space of all continuous functions} $u:[0, T ]\to X$ such that
		\begin{align*}
			\|u\|_{C([0,T],X)}:=\max_{0\leq t\leq T}\|u(t)\|_X<\infty.
		\end{align*}
		Similarly, we define $C^1([0,T],X)$ to be {\em the vector space of all continuous differentiable functions} $u:[0, T ]\to X$ such that
		\begin{align*}
			\|u\|_{C^1([0,T],X)}:=\max_{0\leq t\leq T}\big(\|u(t)\|_X+\|\partial_tu(t)\|_X\big)<\infty.
		\end{align*}
	\end{defi}
	\begin{defi}\label{defi:c}
		Let $\mathcal{H}$ be a Hilbert space with norm $\|\cdot\|_{\mathcal{H}}$. We say that a map $F:\mathcal{H}\to \mathcal{H}$ is {\em Lipschitz continuous} on $\mathcal{H}$ if there exists some constant $c>0$ such that 
		\begin{align*}
			\|F(u)-F(v)\|_{\mathcal{H}}\leq c	\|u-v\|_{\mathcal{H}} \quad \text{for all}\,\,u,v\in \mathcal{H}.
		\end{align*}
	We denote {\em the space of Lipschitz continuous function} on $\mathcal{H}$ by ${\rm Lip}(\mathcal{H})$.
	\end{defi}
	
	Throughout this paper, we assume that a Riemannian manifold is smooth and oriented. Also, whenever the Riemannian
	distance function is involved, we assume that the manifold is connected. Let $(M,g)$ be a Riemannian $n$-manifold with Riemannian metric $g$. Let $\nu$ be the Riemannian volume measure on $M$, i.e.,
	$$\,d\nu=\sqrt{\det g_{ij}}\, dx,$$
	where $g_{ij}$ are the components of $g$ in a coordinate chart, and $dx$ is the Lebesgue measure on $\R^n$. For any $F\subseteq M$, we let $\overline{F}$, $\partial F$, and $F^{\circ}$ denote, respectively, the closure, boundary, and interior of $F$. For a bounded open set $\Omega\subseteq M$, $C_c(\Omega)$, $C^{\infty}(\Omega)$, and $C_c^{\infty}(\Omega)$ denote, respectively, the following spaces of functions on $\Omega$: continuous functions with compact support,  $C^{\infty}$ functions, and $C^{\infty}$ functions with compact support. For $u\in C^{\infty}(\Omega)$, in local coordinates, $\nabla u=g^{ij}\partial_i u \partial_j$, where $g^{ij}=(g_{ij})^{-1}$ and $\partial_j=\partial/\partial x^j$. Let $\mu$ be a finite positive Borel measure on $M$ with ${\rm supp}(\mu)\subseteq\overline{\Omega}$. Let $L^2(\Omega,\mu):=L^2(\Omega,\mu,\mathbb{C})$ be the space of measurable functions $u\in \Omega\to \mathbb{C}$ such that 
	$$\|u\|_\mu:=\Big(\int_\Omega |u|^2\,d\mu\Big)^{1/2}<\infty.$$
	We regard $L^2(\Omega,\mu)$ as a real Hilbert space with the scalar product $\langle\cdot,\cdot\rangle_\mu$ associated to $\|\cdot\|_{\mu}$ defined as
	$$\langle u, v\rangle_\mu:={\rm re}\int_\Omega u\bar{v}\,d\mu.$$
	Let $W^{1,2}(\Omega)$ be the real Hilbert space equipped with the norm
	\begin{align}\label{eq:H}
		\|u\|_{W^{1,2}(\Omega)}:=\Big(\int_{\Omega}|u|^2\,d\nu+\int_{\Omega}|\nabla u|^2\,d\nu\Big)^{\frac{1}{2}}.
	\end{align}
	The scalar product $\langle\cdot,\cdot\rangle_{W^{1,2}(\Omega)}$ associated to $\|\cdot\|_{W^{1,2}(\Omega)}$ is defined as
	$$ \left\langle u, v\right\rangle_{W^{1,2}(\Omega)} :={\rm re}\int_{\Omega}u\bar{v} \,d\nu +{\rm re}\int_{\Omega}\langle\nabla u, \nabla \bar{v}\rangle_g \,d\nu,$$
	where $\langle\cdot,\cdot\rangle_g= g(\cdot,\cdot)$.  Let $W^{1,2}_0(\Omega)$ denote the closure of $C^\infty_c(\Omega)$ in the $W^{1,2}(\Omega)$ norm.

	We denote the {\em Euclidean distance} by $d_{\mathbb{E}}(\cdot, \cdot)$. For a connected Riemannian $n$-manifold $M$, we denote the {\em Riemannian distance} by $d_M(\cdot,\cdot)$. Let
	\begin{align*}
		&B(x,r):=\{y\in \R^n: d_{\mathbb{E}}(x,y)<r\}, \quad x\in \R^n,\\
		&B^M(p,r):=\{q\in M: d_M(p,q)<r\},\quad p\in M.
	\end{align*}
	Let $\mu$ be a finite positive Borel measure on $M$.  Recall that the {\em lower} and {\em upper $L^{\infty}$-dimensions of $\mu$}
	are defined respectively as
	\begin{align}\label{eq:dim}
		\underline{\dim}_\infty(\mu):=\displaystyle{\varliminf_{\delta\to 0^+}}\frac{\ln (\sup_x \mu(B^{M}(x,\delta)))}{\ln \delta}
	\end{align}
	where the supremum is taken over all $x\in{\rm supp}(\mu)$. Similarly, one can define $\overline{\dim}_\infty(\mu)$.
	If the limit exists, we denote the common value by $\dim_\infty(\mu)$.
	
	\subsection{ Laplacian defined by a measure}\label{s:lap}
	Let $M$ be a complete oriented smooth Riemannian $n$-manifold. Let  $\Omega\subseteq M$ be a bounded open set and let $\mu$ be a finite positive Borel measure on $M$ with ${\rm supp}(\mu)\subseteq\overline{\Omega}$. Throughout this paper, we assume $\mu(\Omega)>0$. We introduce the following {\em Poincar\'e inequalities for a measure $\mu$:}
\begin{enumerate}
			\item[(MPID)] (Dirichlet boundary condition)  There exists a constant $C>0$ such that for all $u\in C^{\infty}_c(\Omega)$,
			\begin{align}\label{eq:PI}
				\int_\Omega |u|^2\,d\mu\leq C\int_\Omega |\nabla u|^2\,d\nu.
			\end{align}
			\item[(MPIE)] ($\partial\Omega=\emptyset$) There exists a constant $C>0$ such that for all $u\in C^{\infty}_c(\Omega)$,
			\begin{align}\label{eq:PIN}
				\int_\Omega |u|^2\,d\mu\leq C\Big(\int_\Omega |\nabla u|^2\,d\nu+\int_\Omega |u|^2\,d\nu\Big).
			\end{align}
		\end{enumerate}
		(MPID) (resp. (MPIE)) implies that each equivalence class $u \in W^{1,2}_0(\Omega)$ (resp. $u \in W^{1,2}(\Omega)$) contains a unique (in $L^{2}(\Omega, \mu)$ sense) member $\overline{u}$ that belongs to $L^{2}(\Omega, \mu)$ and satisfies both conditions below:
		\begin{enumerate}
			\item[(1)] There exists a sequence $\left\{u_{k}\right\}$ in $C_{c}^{\infty}(\Omega)$ such that $u_{k} \rightarrow \overline{u}$ in $W^{1,2}_0(\Omega)$ (resp. $W^{1,2}(\Omega)$) and $u_{k} \rightarrow \overline{u}$ in $L^{2}(\Omega, \mu)$;
			\item[(2)] $\overline{u}$ satisfies the inequality in \eqref{eq:PI} (resp. \eqref{eq:PIN}).
		\end{enumerate}
		
		We call $\overline{u}$ the {\em $L^{2}(\Omega, \mu)$-representative} of $u$. Assume  $\mu$ satisfies (MPID) (resp. (MPIE)) on $\Omega$ and define a mapping $I_D: W^{1,2}_0(\Omega) \rightarrow L^{2}(\Omega, \mu)$ (resp. $I_E: W^{1,2}(\Omega) \rightarrow L^{2}(\Omega, \mu)$) by
		$$
		I_D(u)=\overline{u},\qquad(\text{resp.}\,\,I_E(u)=\overline{u}).
		$$
		Notice that $I_D$ and $I_E$ are bounded linear operators but are not necessarily injective. Hence we consider a subspace $\mathcal{N}_D$ of $W^{1,2}_0(\Omega)$  defined as
		$$
		\mathcal{N}_D:=\left\{u \in W^{1,2}_0(\Omega):\|I_D(u)\|_{\mu}=0\right\}.$$
		Similarly, we can define $\mathcal{N}_E$. Since $\mu$ satisfies (MPID) (resp. (MPIE)) on $\Omega$, $\mathcal{N}_D$ (resp. $\mathcal{N}_E$) is a closed subspace of $W^{1,2}_0(\Omega)$ (resp. $W^{1,2}(\Omega)$). Let $\mathcal{N}_D^{\perp}$ (resp. $\mathcal{N}_E^{\perp}$) be the orthogonal complement of $\mathcal{N}_D$ in $W^{1,2}_0(\Omega)$ (resp. $\mathcal{N}_E$ in $W^{1,2}(\Omega)$). Then $I_D: \mathcal{N}_D^{\perp} \rightarrow L^{2}(\Omega, \mu)$ (resp. $I_E: \mathcal{N}_E^{\perp} \rightarrow L^{2}(\Omega, \mu)$) is injective. Throughout this paper, if no confusion is possible, we will denote $\overline{u}$ simply by $u$.
		
		Now, we consider non-negative bilinear forms $\mathcal{E}_D(\cdot, \cdot)$ and $\mathcal{E}_E(\cdot, \cdot)$ on $L^{2}(\Omega, \mu)$ given by
		\begin{align}\label{eq(1.1)}
			\mathcal{E}_E(u, v)=\mathcal{E}_D(u, v):=\int_{\Omega} \langle\nabla u, \nabla v\rangle\,  d \nu
		\end{align}
		with $\operatorname{dom}(\mathcal{E}_D)=\mathcal{N}_D^{\perp}$ and $\operatorname{dom}(\mathcal{E}_E)=\mathcal{N}_E^{\perp}$.
		$(\mathcal{E}_D, \operatorname{dom}(\mathcal{E}_D))$ and $(\mathcal{E}_E, \operatorname{dom}(\mathcal{E}_E))$ are closed quadratic forms on $L^{2}(\Omega,\mu)$, and therefore there exists a non-negative definite self-adjoint operator $-\Delta_\mu^D$ on $L^{2}(\Omega,\mu)$ such that $\operatorname{dom}\mathcal{E}_D=\operatorname{dom}((\Delta_\mu^D)^{1/2})$ and
		$$\mathcal{E}_D(u,v)=\langle (-\Delta_\mu^D)^{1/2}u, (-\Delta_\mu^D)^{1/2}u \rangle_{\mu} \quad \text{for all}\,\, u, v\in \operatorname{dom}\mathcal{E}_D.$$
		We call $-\Delta_\mu^D$ the \textit{Dirichlet Laplacian with respect to $\mu$}. We also call $-\Delta_{\mu}^D$ a {\em Kre\u{\i}n-Feller operator}.  Similarly, we can define $\Delta_\mu^E$. In this paper, if no confusion is possible, we will denote  $\Delta_\mu^D$ and $\Delta_\mu^E$  simply by $\Delta_\mu$. Similarly, we denote $\operatorname{dom}(\mathcal{E}_D)$ and $\operatorname{dom}(\mathcal{E}_E)$ simply by $\operatorname{dom}(\mathcal{E})$.
	
	It is known that $u \in \operatorname{dom}(\Delta_\mu)$ if and only if $u \in \operatorname{dom}\mathcal{E}$ and there exists $f \in L^{2}(\Omega, \mu)$ such that $\mathcal{E}(u, v)=\langle f, v\rangle_{\mu}$ for all $v \in \operatorname{dom}\mathcal{E}$ (see, e.g., \cite{Davies_1995}). In this paper, we let $$\|\cdot\|_{\operatorname{dom}\mathcal{E}}:=\sqrt{\mathcal{E}(\cdot, \cdot)}.$$ 
Note that $\|\cdot\|_{\operatorname{dom}\mathcal{E}}$ is a norm on $\mathcal{E}/ \{\text{constants}\}$.

	The authors proved the following results (\cite[Theorem 2.2]{Ngai-Ouyang_2023}): Let $n\geq 1$, $M$ be a complete connected Riemannian $n$-manifold, and $\Omega \subseteq M$ be a bounded open set. Let $\mu$ be a finite positive  Borel measure on $M$ such that $\operatorname{supp}(\mu) \subseteq \overline{\Omega}$ and $\mu(\Omega)>0$. Assume that $\underline{\operatorname{dim}}_{\infty}(\mu)>n-2$.   Then there exists an
	orthonormal basis $\left\{\varphi_{k}\right\}_{k=0}^{\infty}$ of $L^{2}(\Omega, \mu)$ consisting of  eigenfunctions of $-\Delta_{\mu}.$ The eigenvalues $\left\{\lambda_{k}\right\}_{k=0}^{\infty}$ satisfy $0=\lambda_0<\lambda_{1} \leq \lambda_{2} \leq \cdots$,  where $\lambda_{0}=0$ only in the $\partial\Omega=\emptyset$ case. If $\dim(\operatorname{dom}\mathcal{E}) =\infty$, then $\lim _{k \to \infty} \lambda_{k}=\infty$. Hence we have
\begin{align}\label{eq:dome}
			\operatorname{dom}\mathcal{E}&=\Big\{\sum_{k=0}^\infty a_k\varphi_k:\sum_{k=0}^\infty|a_k|^2\lambda_k<\infty\Big\}\qquad \text{and}\\
			\operatorname{dom}(\Delta_\mu)&=\Big\{\sum_{k=0}^\infty a_k\varphi_k:\sum_{k=0}^\infty|a_k|^2\lambda_k^2<\infty\Big\}.
	\end{align}

	By the Lax-Milgram theorem, for any $w\in (\operatorname{dom}\mathcal{E})'$, there exists a unique $u\in \operatorname{dom}\mathcal{E}$ such that
	$$	\mathcal{E}(u, v):=\langle w,  v\rangle\qquad \text{for all}\,\,v\in \operatorname{dom}\mathcal{E},$$
	 where throughout this paper, $\langle \cdot,\cdot\rangle$ denotes the pairing between $(\operatorname{dom}\mathcal{E})'$ and $\operatorname{dom}\mathcal{E}$. 
Hence we can define a bijective operator $L$ from $\operatorname{dom}\mathcal{E}$ to $(\operatorname{dom}\mathcal{E})'$ by 
	\begin{align}\label{eq:L}
		Lu=w,
	\end{align}
	and equip $(\operatorname{dom}\mathcal{E})'$ with the scalar product
	$$\langle u,v\rangle_{(\operatorname{dom}\mathcal{E})'}:=\mathcal{E}(L^{-1}u, L^{-1}v)$$
	with the norm
		\begin{align*}
			\|w\|_{(\operatorname{dom}\mathcal{E})'}:=\|L^{-1}w\|_{\operatorname{dom}\mathcal{E}}\qquad \text{for}\,\,w\in (\operatorname{dom}\mathcal{E})'.
	\end{align*}
	Note that $\operatorname{dom} L=\operatorname{dom}\mathcal{E}$ and  $\langle w,  v\rangle=\langle w,  v\rangle_\mu$ for all $w\in (\operatorname{dom}\mathcal{E})'$ and $v\in \operatorname{dom}\mathcal{E}$.
	It follows that $L$ is an extension of $-\Delta_{\mu}$. 
	
	By identifying $L^2(\Omega,\mu)$ with $(L^2(\Omega,\mu))'$,  we have that $\operatorname{dom}\mathcal{E}\hookrightarrow L^2(\Omega,\mu)\hookrightarrow(\operatorname{dom \mathcal{E}})' $, and $\{\varphi_k\}_{k=0}^\infty$ is a complete orthogonal set of $\operatorname{dom}\mathcal{E}$. Hence $w=\sum_{k=0}^\infty a_k\varphi_k\in (\operatorname{dom}\mathcal{E})'$ if and only if there exists a unique $L^{-1}w=\sum_{k=0}^\infty b_k\varphi_k\in \operatorname{dom}\mathcal{E}$ such that $\mathcal{E}(L^{-1}w, u)=\langle w,  u\rangle$ for all $v\in \operatorname{dom}\mathcal{E}$ (see \cite{Ngai-Tang_2023}). Substituting $v=\varphi_k$ for $k\geq 1$, we get $a_k=\langle w, \varphi_k \rangle=\mathcal{E}(L^{-1}w, \varphi_k)=b_k\lambda_k$. For $k=0$, we have $\lambda_k=0$. Hence $w=\sum_{k=0}^\infty a_k\varphi_k\in (\operatorname{dom\mathcal{E}})'$ if and only if $\|w\|_{(\operatorname{dom \mathcal{E}})'}^2=\|L^{-1}w\|_{\operatorname{dom \mathcal{E}}}^2=\sum_{k=0}^\infty a_k^2/\lambda_k<\infty$. Therefore, for every $u=\sum_{k=0}^\infty a_k\varphi_k\in \operatorname{dom}\mathcal{E}$, we have $Lu=\sum_{k=0}^\infty a_k\lambda_k\varphi_k\in (\operatorname{dom}\mathcal{E})'$, and 
		\begin{align*}
			(\operatorname{dom}\mathcal{E})'=\Big\{\sum_{k=0}^\infty a_k\varphi_k:\sum_{k=1}^\infty|a_k|^2/\lambda_k<\infty\Big\}.
	\end{align*} 
	
	\begin{defi}
		For $\alpha\geq 0$, define
		\begin{align*}
			E_\alpha(\Omega,\mu):=\Big\{\sum_{k=0}^\infty b_k\varphi_k:\sum_{k=0}^\infty |b_k|^2\lambda_k^\alpha<\infty\Big\}
		\end{align*}
		with the norm $\|\cdot\|_{E_\alpha(\Omega,\mu)}$ given by
		\begin{align*}
			\|u\|_{E_\alpha(\Omega,\mu)}:=\Big(\sum_{k=0}^\infty |b_k|^2\lambda_k^\alpha\Big)^{1/2}\qquad\text{for}\,\,u=\sum_{k=0}^\infty b_k\varphi_k.
		\end{align*}
	\end{defi}
	\begin{prop}\label{prop:2.8}
		Let $M$ be a complete oriented smooth Riemannian $n$-manifold. Let  $\Omega\subseteq M$ be a bounded open set and let $\mu$ be a finite positive Borel measure on $M$ with ${\rm supp}(\mu)\subseteq\overline{\Omega}$. For all $\alpha\geq0$, $(E_\alpha(\Omega,\mu),\|\cdot\|_{E_\alpha(\Omega,\mu)})$ is a Hilbert space. In particular, $E_0(\Omega,\mu)=L^2(\Omega, \mu)$, $E_1(\Omega,\mu)=\operatorname{dom}\mathcal{E}$, and $E_2(\Omega,\mu)=\operatorname{dom}(\Delta_\mu)$. Moreover, $E_\alpha(\Omega,\mu)$ is dense in $\operatorname{dom}\mathcal{E}$ for $\alpha>1$.
	\end{prop}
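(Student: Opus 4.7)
The plan is to identify each $E_\alpha(\Omega,\mu)$ with a weighted $\ell^2$-type space via the spectral representation, and then read off the Hilbert space structure and the special cases directly from the representation. Concretely, I would define the candidate inner product
\[
\langle u, v\rangle_{E_\alpha(\Omega,\mu)} := \sum_{k=0}^\infty b_k \overline{c_k}\,\lambda_k^\alpha, \qquad u=\sum_k b_k\varphi_k,\ v=\sum_k c_k\varphi_k,
\]
and verify by Cauchy--Schwarz on the partial sums that it is absolutely convergent on $E_\alpha(\Omega,\mu)$ and induces the stated norm. In the Dirichlet case $\lambda_0>0$, this is honestly definite; in the $\partial\Omega=\emptyset$ case with $\lambda_0=0$ and $\alpha>0$, the coefficient $b_0$ plays no role, so the argument is carried out on the quotient by constants, which is exactly the convention used earlier for $\|\cdot\|_{\operatorname{dom}\mathcal{E}}$.

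For completeness, I would argue that the map $u=\sum b_k\varphi_k \mapsto (\lambda_k^{\alpha/2} b_k)_{k\geq 0}$ (or, in the degenerate case, $(\lambda_k^{\alpha/2} b_k)_{k\geq 1}$) is an isometric isomorphism onto $\ell^2$. Given a Cauchy sequence $\{u^{(n)}\}$ in $E_\alpha(\Omega,\mu)$, the coordinate sequences $\{\lambda_k^{\alpha/2} b_k^{(n)}\}_n$ are Cauchy in $\ell^2$, hence converge to some $(\beta_k)\in \ell^2$; setting $b_k:=\lambda_k^{-\alpha/2}\beta_k$ (for $\lambda_k>0$) and $u:=\sum_k b_k\varphi_k$, a standard Fatou-type estimate shows $u\in E_\alpha(\Omega,\mu)$ and $u^{(n)}\to u$ in the $E_\alpha(\Omega,\mu)$-norm.

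The three identifications are immediate from the formulas already established. For $\alpha=0$, $\lambda_k^0=1$, so $E_0(\Omega,\mu)=\{\sum b_k\varphi_k:\sum|b_k|^2<\infty\}$, which coincides with $L^2(\Omega,\mu)$ since $\{\varphi_k\}$ is an orthonormal basis and Parseval applies. For $\alpha=1$ and $\alpha=2$, comparing the definition of $E_\alpha(\Omega,\mu)$ with the two displayed characterizations of $\operatorname{dom}\mathcal{E}$ and $\operatorname{dom}(\Delta_\mu)$ in the preamble to the proposition yields $E_1(\Omega,\mu)=\operatorname{dom}\mathcal{E}$ and $E_2(\Omega,\mu)=\operatorname{dom}(\Delta_\mu)$ set-theoretically, and the norms match (with $\|\cdot\|_{\operatorname{dom}\mathcal{E}}=\sqrt{\mathcal{E}(\cdot,\cdot)}$ matching $\|\cdot\|_{E_1(\Omega,\mu)}$ because $\mathcal{E}(\varphi_j,\varphi_k)=\lambda_k\delta_{jk}$).

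For the density claim, given any $u=\sum_{k=0}^\infty a_k\varphi_k\in\operatorname{dom}\mathcal{E}$, I would take the truncations $u_N:=\sum_{k=0}^N a_k\varphi_k$. Each $u_N$ is a finite linear combination of eigenfunctions, so trivially $u_N\in E_\alpha(\Omega,\mu)$ for every $\alpha\geq 0$, and
\[
\|u-u_N\|_{\operatorname{dom}\mathcal{E}}^2=\sum_{k>N}|a_k|^2\lambda_k\To 0
\]
since $u\in E_1(\Omega,\mu)$ makes the series convergent. The hypothesis $\alpha>1$ is actually not used in this convergence; it merely ensures that $E_\alpha(\Omega,\mu)\subseteq\operatorname{dom}\mathcal{E}$ is a proper inclusion worth mentioning, via $\lambda_k^\alpha\geq \lambda_k$ eventually. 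The only mildly delicate point in the whole argument is the bookkeeping for $\lambda_0=0$ in the Neumann-type case, which is resolved exactly as in the preamble by working modulo constants.
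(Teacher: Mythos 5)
Your proposal is correct and follows essentially the same route as the argument the paper defers to (the weighted-$\ell^2$ identification via the eigenfunction expansion, completeness by isometry with $\ell^2$, the three identifications read off from \eqref{eq:dome} and the formula for $\operatorname{dom}(\Delta_\mu)$, and density by truncating the eigenfunction series); the paper omits the details, citing Hu's Proposition 2.2, whose proof is exactly this. Your handling of the $\lambda_0=0$ case by working modulo constants, and your observation that $\alpha>1$ serves only to guarantee the containment $E_\alpha(\Omega,\mu)\subseteq\operatorname{dom}\mathcal{E}$, are both consistent with the paper's conventions.
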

	The proof of Proposition \ref{prop:2.8} is similar to that of \cite[Proposition 2.2]{Hu_2002}; we omit the details.

	\section{Weak solutions of wave equations} \label{S:wave}
	\setcounter{equation}{0}
	In this section, we let $M$ be a complete oriented smooth Riemannian $n$-manifold. Let  $\Omega\subseteq M$ be a bounded open set and let $\mu$ be a finite positive Borel measure on $M$ with ${\rm supp}(\mu)\subseteq\overline{\Omega}$. We prove the existence and uniqueness of weak solutions of the semi-linear wave equation in \eqref{eq:wnonlinear}.

	\begin{defi}\label{def:ww}
		Let $0<T<\infty$. Let $g\in \operatorname{dom}\mathcal{E}$, $h\in L^2(\Omega,\mu)$ and $f\in L^\infty([0,T],\operatorname{dom}\mathcal{E})$. A function $u\in L^\infty([0,T],\operatorname{dom}\mathcal{E})$, $\partial_tu \in L^\infty([0,T],L^2(\Omega,\mu))$, and $\partial_{tt}u \in L^\infty([0,T],(\operatorname{dom}\mathcal{E})')$ is a {\em weak solution} of the following wave equation:
		\begin{align}\label{eq:wlinear}
			\left\{
			\begin{array}{lll}
				\partial_{tt}u-\Delta_\mu u=f(t)\quad &\text{on}\,\,\,\Omega\times[0,T],\\
				u=0\quad  &\text{on}\,\,\,\partial\Omega\times[0,T],\\
				u=g,\,\,\partial_{t}u=h\quad  &\text{on}\,\,\,\Omega\times\{t=0\},\\
			\end{array}
			\right.
		\end{align}
		if the following conditions are satisfied:
		\begin{enumerate}
			\item[(1)] $\langle \partial_{tt}u,v \rangle+\mathcal{E}(u,v)=\langle f(t),v\rangle_\mu$ for each $v\in \operatorname{dom}\mathcal{E}$ and Lebesgue a.e. $t\in [0,T]$;
			\item[(2)] $u(0)=g$ and $\partial_{t}u(0)=h$.
		\end{enumerate}
	\end{defi}	
	\begin{rema}\label{rema:3.1}
		Here we comment on Definition \ref{def:ww}.
		\begin{enumerate}
			\item[(a)] The boundary condition $u|_{\partial \Omega}=0$ in \eqref{eq:wlinear} is included in the assumption $u(t)\in \operatorname{dom}\mathcal{E}$. If $u\in L^\infty([0,T],\operatorname{dom}\mathcal{E})$, $\partial_tu \in L^\infty([0,T],L^2(\Omega,\mu))$, and $\partial_{tt}u \in L^\infty([0,T],\\(\operatorname{dom}\mathcal{E})')$, then $u\in C([0,T], L^2(\Omega,\mu))$, and thus the initial conditions $u(0)=g$ and $\partial_{t}u(0)=h$ make sense.
			\item[(b)] Condition (1) is equivalent to
			\begin{align*}
				\partial_{tt}u+Lu=f(t)\quad \text{in}\,\,(\operatorname{dom}\mathcal{E})'\,\,\text{for Lebesgue a.e.}\,\,t\in [0,T],
			\end{align*}
			where $L$ is defined as in \eqref{eq:L}.
		\end{enumerate}
	\end{rema}
Let $\left\{\varphi_{k}\right\}_{k=0}^{\infty}$ be an orthonormal basis  of $L^{2}(\Omega, \mu)$ such that  $-\Delta_{\mu}\varphi_{k}=\lambda_k\varphi_{k}$.  Let 
		\begin{align*}
			g=&\sum_{k=0}^\infty \alpha_k\varphi_k\in L^2(\Omega,\mu),\qquad h=\sum_{k=0}^\infty \beta_k\varphi_k\in L^2(\Omega,\mu),\quad\text{and}\\
			f(t)&=\sum_{k=0}^\infty \gamma_k(t)\varphi_k\in L^2([0,T], L^2(\Omega,\mu)),
		\end{align*}
		where $\gamma_k(t)=\langle f(t), \varphi_k\rangle_\mu$ for $k\geq 1$. Let
		\begin{align*}
			u_0(t):=\sum_{k=1}^\infty\frac{\sin(\sqrt{\lambda_k}t) }{\sqrt{\lambda_k}}\beta_k\varphi_k+\sum_{k=0}^\infty\alpha_k\cos(\sqrt{\lambda_k}t)\varphi_k.
		\end{align*}
		Define
		\begin{align}\label{eq:wut1}
			u(t):=u_0(t)+\sum_{k=1}^\infty\frac{\varphi_k}{\sqrt{\lambda_k}}\int_0^t\sin\big(\sqrt{\lambda_k}(t-\tau)\big)\gamma_k(\tau)d\tau=:\sum_{k=0}^\infty c_k\varphi_k,
		\end{align}
		\begin{align}\label{eq:wHt}
			H(t):=&\sum_{k=1}^\infty\beta_k\cos(\sqrt{\lambda_k}t)\varphi_k-\sum_{k=0}^\infty\alpha_k\sqrt{\lambda_k}\sin(\sqrt{\lambda_k}t)\varphi_k\nonumber\\
			&+ \sum_{k=1}^\infty\varphi_k\int_0^t\cos(\sqrt{\lambda_k}(t-\tau))\gamma_k(\tau)d\tau=:\sum_{k=0}^\infty d_k\varphi_k
		\end{align}
		and
		\begin{align}\label{eq:wK}
			K(t):=&-\sum_{k=1}^\infty\sqrt{\lambda_k}\beta_k\sin(\sqrt{\lambda_k}t)\varphi_k-\sum_{k=0}^\infty\lambda_k\alpha_k\cos(\sqrt{\lambda_k}t)\varphi_k+f(t)\nonumber\\
			&-\sum_{k=1}^\infty\sqrt{\lambda_k}\varphi_k\int_0^t\sin(\sqrt{\lambda_k}(t-\tau))\gamma_k(\tau)d\tau.
	\end{align}
	
	We have the following theorem.
	\begin{thm}\label{thm:wmain1}
		Let $M$ be a complete oriented smooth Riemannian $n$-manifold. Let  $\Omega\subseteq M$ be a bounded open set and let $\mu$ be a finite positive Borel measure on $M$ with ${\rm supp}(\mu)\subseteq\overline{\Omega}$.  Assume that $\underline{\operatorname{dim}}_{\infty}(\mu)>n-2$.
		Let $g$, $h$, $f(t)$, $u(t)$, $H(t)$ and $K(t)$ be defines as above. For $g\in \operatorname{dom}\mathcal{E}$, $h\in L^2(\Omega,\mu)$, and $f(t)\in L^\infty([0,T], \operatorname{dom}\mathcal{E})$, the following hold:
		\begin{enumerate}
			\item[(a)]  $\partial_{t}u=H(t)$ in $L^2(\Omega,\mu)$ for Lebesgue a.e. $t\in [0,T]$.
			\item[(b)] $\partial_{tt}u=K(t)$ in $(\operatorname{dom}\mathcal{E})'$ for Lebesgue a.e. $t\in [0,T]$.
			\item[(c)] $u(t)$ is the unique weak solution of the wave equation \eqref{eq:wlinear}.
			\item[(d)] If $g\in E_{\alpha}(\Omega,\mu)$ and $f\in L^\infty([0,T],E_{\alpha}(\Omega,\mu))$, where $\alpha\geq 2$, then $u(t)\in L^\infty([0,T]$,\\$E_{\alpha}(\Omega,\mu))$ and $\partial_t u(t)\in L^\infty([0,T],E_{\alpha-2}(\Omega,\mu))$.
			\item[(e)] If $f\equiv0$, then
			\begin{align*}
				\|u(t)\|_\mu\leq c_1\|h\|_{\operatorname{dom}\mathcal{E}}^2+\|g\|_\mu^2\quad\text{and}\quad \mathcal{E}(u,u)\leq c_2 \mathcal{E}(h,h)+\mathcal{E}(g,g)\quad \text{for all}\,\,t\in [0,T],
			\end{align*}
			where $c_1$ and $c_2$ are positive constants.
		\end{enumerate}
	\end{thm}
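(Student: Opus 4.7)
The plan is to diagonalize equation \eqref{eq:wlinear} with respect to the orthonormal basis $\{\varphi_k\}_{k=0}^\infty$ of $L^2(\Omega,\mu)$ provided by the hypothesis $\underline{\operatorname{dim}}_\infty(\mu) > n-2$. Writing a candidate solution as $u(t) = \sum_k c_k(t)\varphi_k$ and testing against $\varphi_k$, the PDE formally reduces to the decoupled scalar ODEs $c_k''(t) + \lambda_k c_k(t) = \gamma_k(t)$ with $c_k(0) = \alpha_k$, $c_k'(0) = \beta_k$, whose unique solutions (by variation of parameters) are precisely the coefficients appearing in \eqref{eq:wut1}. Formally differentiating term by term in $t$ then produces $H$ and $K$. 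The proof thus consists of (i) showing the series for $u$, $H$, $K$ lie in the asserted function spaces, (ii) identifying the termwise derivatives with $\partial_t u$ and $\partial_{tt} u$, (iii) proving uniqueness, and (iv) reading off regularity and energy bounds.

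For (a) and (b), the main tool is Parseval. Since $\|u(t)\|_{\operatorname{dom}\mathcal{E}}^2 = \sum_k |c_k(t)|^2 \lambda_k$, the elementary bounds $|\sin(\sqrt{\lambda_k}t)/\sqrt{\lambda_k}| \le \min\{t, 1/\sqrt{\lambda_k}\}$, combined with $g \in \operatorname{dom}\mathcal{E}$, $h \in L^2(\Omega,\mu)$, $f \in L^\infty([0,T], \operatorname{dom}\mathcal{E})$, yield $u \in L^\infty([0,T], \operatorname{dom}\mathcal{E})$; likewise $H \in L^\infty([0,T], L^2(\Omega,\mu))$. Identifying $H$ with $\partial_t u$ amounts to showing that the difference quotient of $u$ converges to $H$ in $L^2(\Omega,\mu)$, which follows by dominated convergence applied coefficient by coefficient. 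For $K$, two $t$-differentiations bring down a factor $\lambda_k$, so the coefficients are $O(\lambda_k\alpha_k)$, $O(\sqrt{\lambda_k}\beta_k)$, and $O(\sqrt{\lambda_k}\gamma_k(\tau))$; the $(\operatorname{dom}\mathcal{E})'$ norm weights each coefficient squared by $1/\lambda_k$, turning these into $\sum\lambda_k\alpha_k^2 = \|g\|_{\operatorname{dom}\mathcal{E}}^2$ and analogous sums, all finite under the hypotheses.

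Existence in (c) is immediate from (a) and (b): testing $\partial_{tt} u + Lu = f$ against $v = \varphi_j$ reduces to the already-solved ODE for $c_j$, and density of $\operatorname{span}\{\varphi_j\}$ in $\operatorname{dom}\mathcal{E}$ extends this to every admissible test function. The principal obstacle is uniqueness, because a weak solution has $\partial_t u$ only in $L^\infty([0,T], L^2(\Omega,\mu))$ and so the formal expression $\mathcal{E}(u, \partial_t u)$ is not directly available. To circumvent this I would adopt the Lions test-function trick: given a weak solution $u$ of the homogeneous problem with $g = h = 0$ and $f \equiv 0$, fix $s \in (0,T)$ and set $v(t) := \int_t^s u(r)\, dr$ for $0 \le t \le s$ and $v(t) := 0$ otherwise. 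Then $v(t) \in \operatorname{dom}\mathcal{E}$ and $\partial_t v = -u$ on $[0,s]$. Testing the equation against $v$ and integrating on $[0,s]$, one integration by parts in $t$ (legitimate because $\partial_t u \in L^2$ and $v(s) = \partial_t u(0) = 0$) gives $\int_0^s \langle \partial_{tt}u, v\rangle\,dt = \int_0^s \langle \partial_t u, u\rangle_\mu\,dt = \tfrac12 \|u(s)\|_\mu^2$, while symmetry of $\mathcal{E}$ yields $\int_0^s \mathcal{E}(u,v)\,dt = -\tfrac12\int_0^s \partial_t \mathcal{E}(v,v)\,dt = \tfrac12 \mathcal{E}(v(0), v(0))$. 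Summing the two identities forces $\tfrac12\|u(s)\|_\mu^2 + \tfrac12 \mathcal{E}(v(0), v(0)) = 0$, hence $u \equiv 0$.

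Finally, parts (d) and (e) are short Parseval computations. For (d), the series $\sum |c_k(t)|^2 \lambda_k^\alpha$ is controlled uniformly in $t \in [0,T]$ because the trigonometric factors and the Duhamel integrand are bounded, and the assumptions $g \in E_\alpha$ and $f \in L^\infty([0,T], E_\alpha)$ supply the finiteness; the shift from $\alpha$ to $\alpha - 2$ for $\partial_t u$ reflects the extra factor of $\sqrt{\lambda_k}$ produced by differentiating $c_k$ in $t$. For (e), setting $f \equiv 0$ reduces $c_k(t)$ to $\alpha_k \cos(\sqrt{\lambda_k} t) + \beta_k \sin(\sqrt{\lambda_k} t)/\sqrt{\lambda_k}$, so squaring, summing with weights $1$ and $\lambda_k$, and using $\sin^2 + \cos^2 = 1$ together with the spectral gap $\lambda_k \ge \lambda_1 > 0$ gives the asserted bounds on $\|u(t)\|_\mu$ and $\mathcal{E}(u,u)$ in terms of the initial data.
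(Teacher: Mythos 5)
Your proposal follows the paper's spectral-decomposition strategy essentially verbatim for parts (a), (b), (d), and (e) and for the existence half of (c): the coefficients $c_k$ solve the decoupled ODEs by variation of parameters, the identification $\partial_t u = H$ is obtained by a summable uniform bound on the termwise difference quotients (your ``dominated convergence coefficient by coefficient'' is the paper's Weierstrass M-test on $s_k(t,h)$), and (d), (e) are the same Parseval computations. The one genuine divergence is the uniqueness step in (c). The paper's printed proof tests the homogeneous equation against $\partial_t u$ and differentiates the energy $\tfrac12\|\partial_t u\|_\mu^2+\tfrac12\|u\|_{\operatorname{dom}\mathcal{E}}^2$; this is only formal, since for a weak solution $\partial_t u$ lies merely in $L^\infty([0,T],L^2(\Omega,\mu))$, so it is not an admissible test function and the pairing $\mathcal{E}(u,\partial_t u)$ is not defined a priori --- precisely the obstacle you identify. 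Your Lions-type choice $v(t)=\int_t^s u(r)\,dr$, which is always in $\operatorname{dom}\mathcal{E}$, yields the identity $\tfrac12\|u(s)\|_\mu^2+\tfrac12\mathcal{E}(v(0),v(0))=0$ rigorously, so your route closes a (standard, repairable) gap in the printed argument; amusingly, this is exactly the argument the authors left in a commented-out block of their source. One small slip on your side: in (d) you say the drop from $E_\alpha$ to $E_{\alpha-2}$ for $\partial_t u$ ``reflects the extra factor of $\sqrt{\lambda_k}$,'' but one factor of $\sqrt{\lambda_k}$ costs only one index; your computation actually gives more than the stated conclusion for the $g$- and $f$-terms, while the $\beta_k$-terms genuinely need the extra hypothesis $h\in E_{\alpha-1}$ that appears only in Theorem \ref{thm:wmain2} --- an imprecision already present in the theorem as stated, not a defect of your argument.
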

	\begin{proof}
The proof of this theorem is similar to that of \cite[Theorem 3.1]{Ngai-Tang_2023}; we only indicate the modifications.
	
		(a) Let $\delta$ satisfy $0<2\delta<T$ and $\delta<1$. 	Note that the classical derivative $c_k'(t)=d_k(t)$ for Lebesgue a.e. $t\in [0,T]$. It follows
			that
			\begin{align}\label{eq:wsk1}
				s_k(t,h):=\frac{c_k(t+h)-c_k(t)}{h}-d_k(t)\to 0\quad \text{as}\quad h\to 0
			\end{align}
			for Lebesgue a.e. $t\in [\delta,T-\delta]$ and $h\in (-\delta,\delta)$. For all $t\in [\delta,T-\delta]$ and each $h\in (-\delta,\delta)$,  we remark that 
			\begin{align*}
				& \Big|\int_t^{t+h}\sin(\sqrt{\lambda_k}(t+h-\tau))\gamma_k(\tau)d\tau\Big|^2\\
				\leq& \int_t^{t+h}\big|\sin(\sqrt{\lambda_k}(t+h))\cos(\sqrt{\lambda_k}\tau)-\cos(\sqrt{\lambda_k}(t+h))\sin(\sqrt{\lambda_k}(\tau))\big|^2d\tau\int_t^{t+h}\big|\gamma_k(\tau)\big|^2d\tau\\
				=&\Big(\frac{h}{2}-\frac{\sin(2\sqrt{\lambda_k}h)}{4\sqrt{\lambda_k}}\Big)\int_t^{t+h}\big|\gamma_k(\tau)\big|^2d\tau\\
				\leq&\frac{h^4\lambda_k}{3} \mathop{\esssup}\limits_{t\in [0,T]}\big|\gamma_k(t)\big|^2,
			\end{align*}
			where the last inequality follows by using the inequality $\sin x\geq x-x^3/6$.
			Hence for all $t\in [\delta,T-\delta]$ and each $h\in (-\delta,\delta)$,
			\begin{align}\label{eq:wck1}
				&|c_k(t+h)-c_k(t)|^2\nonumber\\
				=&\Bigg|\frac{\sin(\sqrt{\lambda_k}(t+h))-\sin(\sqrt{\lambda_k}t)}{\sqrt{\lambda_k}}\beta_k+\big(\cos(\sqrt{\lambda_k}(t+h))-\cos(\sqrt{\lambda_k}t)\big)\alpha_k\nonumber\\&+\frac{1}{\sqrt{\lambda_k}} \int_t^{t+h}\sin(\sqrt{\lambda_k}(t+h-\tau))\gamma_k(\tau)d\tau\nonumber\\&+\frac{1}{\sqrt{\lambda_k}}\int_0^t\big(\sin(\sqrt{\lambda_k}(t+h-\tau))-\sin(\sqrt{\lambda_k}(t-\tau))\big)\gamma_k(\tau)d\tau\Bigg|^2\nonumber\\
				\leq&\,3h^2\Big(|\beta_k|^2+\lambda_k|\alpha_k|^2+T\int_0^T|\gamma_k(\tau)|^2d\tau\Big) +\frac{h^4}{3} \mathop{\esssup}\limits_{t\in [0,T]}\big|\gamma_k(t)\big|^2
				\nonumber\\
				=:&3h^2M_k+\frac{h^4}{3} \mathop{\esssup}\limits_{t\in [0,T]}\big|\gamma_k(t)\big|^2,
		\end{align}
		where the facts $|\sin(xt+xh)-\sin(xt)|\leq |x|h$  and $|\cos(xt+xh)-\cos(xt)|\leq |x|h$ are used in the inequality.
		We remark that 
	\begin{align}\label{eq:Mk}
				&\sum_{k=1}^\infty M_k=\|h\|_{\mu}^2+\|g\|_{\operatorname{dom}\mathcal{E}}^2+T\|f(t)\|_{2,L^2(\Omega,\mu)}^2<\infty \quad\text{and}\nonumber\\
				&\sum_{k=1}^\infty  \mathop{\esssup}\limits_{t\in [0,T]}\big|\gamma_k(t)\big|^2=\|f(t)\|_{\infty,L^2(\Omega,\mu)}<\infty.
		\end{align}
		Using \eqref{eq:wHt} and H$\ddot{\rm o}$lder's  inequality, we have
		\begin{align}\label{eq:wdk}
			&|d_k(t)|^2
			\leq3\Big(|\beta_k|^2+\lambda_k|\alpha_k|^2+T\int_0^T|\gamma_k(\tau)|^2d\tau\Big)
			=3M_k.
		\end{align}
		Combining \eqref{eq:wck1} and \eqref{eq:wdk}, we have for Lebesgue a.e. $t\in [\delta,T-\delta]$ and $h\in (-\delta,\delta)$,
		\begin{align}\label{eq:wsk2}
			|s_k(t,h)|^2\leq 2\Big(\frac{|c_k(t+h)-c_k(t)|^2}{h^2}+|d_k(t)|^2\Big)\leq 12M_k+ \mathop{\esssup}\limits_{t\in [0,T]}\big|\gamma_k(t)\big|^2.
		\end{align}
		Using \eqref{eq:Mk}, \eqref{eq:wsk2}, and Weierstrass' M-test, we see the series $\sum_{k=0}^\infty|s_k(t,h)|^2$ converges uniformly for all $h\in (-\delta,\delta)$ and Lebesgue a.e. $t\in [\delta,T-\delta]$. Thus, for Lebesgue a.e. $t\in [\delta,T-\delta]$,
		\begin{align*}
			\lim_{h\to 0}\Big\|\frac{u(t+h)-u(t)}{h}-H(t)\Big\|_{\mu}^2=	\lim_{h\to 0}\sum_{k=0}^\infty|s_k(t,h)|^2
			=\sum_{k=0}^\infty\lim_{h\to 0}|s_k(t,h)|^2=0,
		\end{align*}
		where \eqref{eq:wsk1} is used in the last equality. It follows that $\partial_tu(t)=H(t)$ in $L^2(\Omega,\mu)$ for Lebesgue a.e. $t\in [\delta,T-\delta]$. The desired result follows by letting $\delta\to 0^{+}$.
		
		(b) We can prove that  $\partial_{tt}u=K(t)$ in $(\operatorname{dom}\mathcal{E})'$ for Lebesgue a.e. $t\in [0,T]$ by using a method similar to that in (a).

		(c) We first note that $u(0)=g$, $\partial_t(0)=h$ and
		\begin{align}\label{eq:wlu}
			Lu=&\sum_{k=1}^\infty\sin(\sqrt{\lambda_k}t) \beta_k\varphi_k\sqrt{\lambda_k}+\sum_{k=0}^\infty\alpha_k\cos(\sqrt{\lambda_k}t)\lambda_k\varphi_k\nonumber\\&+\sum_{k=1}^\infty\sqrt{\lambda_k}\varphi_k\int_0^t\sin(\sqrt{\lambda_k}(t-\tau))\gamma_k(\tau)d\tau,
		\end{align}
		where $L$ is defined as in \eqref{eq:L}. Combining \eqref{eq:wlu} and part (b), we have  $\partial_{tt}u(t)+Lu(t)=f(t)$ on $(\operatorname{dom}\mathcal{E})'$ for  Lebesgue a.e. $t\in [0,T]$. It follows from Remark \ref{rema:3.1} that $u(t)$ is a weak solution of \eqref{eq:wlinear}. 
		
		To prove uniqueness, it suffices to show that the only solution of \eqref{eq:wlinear} with $f(t)\equiv g\equiv h \equiv 0$ is $u(t)\equiv 0$. Let $u$ be a weak solution of \eqref{eq:wlinear} with $f(t)\equiv g \equiv h\equiv 0$. Let $\partial_tu\in L^\infty([0,T],L^2(\Omega,\mu))$. Then  
		\begin{align*}
			\langle \partial_{tt}u,\partial_{t}u \rangle_\mu+ \mathcal{E}(u,\partial_tu)=0 \qquad \text{for Lebesgue a.e. $t\in [0,T]$}.
		\end{align*}
		Note that
		\begin{align*}
			\frac{d}{dt}\|\partial_{t}u\|_\mu^2= 2 \langle \partial_{tt}u,\partial_{t}u \rangle_\mu\qquad\text{and}\qquad
			\frac{d}{dt}\|u\|_{\operatorname{dom}\mathcal{E}}^2
			=2\mathcal{E}(u,\partial_tu).
		\end{align*}
		Hence 
		\begin{align*}
			\frac{d}{dt}\Big(\frac{1}{2}\|\partial_{t}u\|_\mu^2+\frac{1}{2}\|u\|_{\operatorname{dom}\mathcal{E}}^2\Big)=0,
		\end{align*}
		i.e., for  Lebesgue a.e. $s\in [0,T]$,
		\begin{align*}
			\frac{1}{2}\|\partial_{t}u(s)\|_\mu^2+\frac{1}{2}\|u(s)\|_{\operatorname{dom}\mathcal{E}}^2=0.
		\end{align*}
		We know that $u(0)=0$ and $\partial_tu(0)=0$. Hence we have $\partial_tu=0$ in $L^\infty([0,T],L^2(\Omega,\mu))$ and $u=0$ in $L^\infty([0,T],\operatorname{dom}\mathcal{E})$. 

		(d) By using \eqref{eq:wut1}--\eqref{eq:wK} and H$\ddot{\rm o}$lder's  inequality, we have  $u(t)\in L^\infty([0,T], E_{\alpha}(\Omega,\mu))$ and $\partial_t u(t)\in L^\infty([0,T],E_{\alpha-2}(\Omega,\mu))$.
		
		(e) Since $f\equiv 0$,  by parts (a), (b), and (c), for all $t\in [0,T]$, we have
		\begin{align*}
			&\|u(t)\|_\mu^2
			\leq\sum_{k=1}^\infty|\beta_k|^2/\lambda_k+\sum_{k=0}^\infty|\alpha_k|^2\leq
			c_1\|h\|_{\operatorname{dom}\mathcal{E}}^2+\|g\|_\mu^2 \quad\text{and}\\
			&\mathcal{E}(u,u)
			\leq\sum_{k=1}^\infty|\beta_k|^2+\sum_{k=0}^\infty|\alpha_k|^2\lambda_k
			\leq c_2\,\mathcal{E}(h,h)+\mathcal{E}(g,g).
		\end{align*}
	\end{proof}
	
	\begin{thm}\label{thm:wmain2}
		Let $M$ be a complete oriented smooth Riemannian $n$-manifold. Let  $\Omega\subseteq M$ be a bounded open set and let $\mu$ be a finite positive Borel measure on $M$ with ${\rm supp}(\mu)\subseteq\overline{\Omega}$.  Assume that $\underline{\operatorname{dim}}_{\infty}(\mu)>n-2$ and $F(\cdot)\in {\rm Lip}(\operatorname{dom}\mathcal{E})$. 
		Let $g=\sum_{k=0}^\infty \alpha_k\varphi_k\in \operatorname{dom}\mathcal{E}$ and $h=\sum_{k=0}^\infty \beta_k\varphi_k\in L^2(\Omega,\mu)$. Then the semi-linear wave equation \eqref{eq:wnonlinear}
		has a unique weak solution $u(t)\in L^\infty([0,T], \operatorname{dom}\mathcal{E})$, which can be expressed as
		\begin{align*}
			u(t)=&\sum_{k=1}^\infty\frac{\sin(\sqrt{\lambda_k}t) }{\sqrt{\lambda_k}}\beta_k\varphi_k+\sum_{k=0}^\infty\alpha_k\cos(\sqrt{\lambda_k}t)\varphi_k\\&+\sum_{k=1}^\infty\frac{\varphi_k}{\sqrt{\lambda_k}}\int_0^t\sin(\sqrt{\lambda_k}(t-\tau))\langle F(u(\tau)),\varphi_k\rangle_\mu d\tau.
		\end{align*}
		Moreover, under the additional assumptions that $F(\cdot)\in {\rm Lip}(E_{\alpha}(\Omega,\mu))$, $g\in E_{\alpha}(\Omega,\mu)$, and $h\in E_{\alpha-1}(\Omega,\mu)$ where $\alpha\geq 2$, we have  $u(t)\in L^\infty([0,T],E_{\alpha}(\Omega,\mu))$, $\partial_t u(t)\in L^\infty([0,T],\\E_{\alpha-1}(\Omega,\mu))$, and $\partial_{tt} u(t)\in L^\infty([0,T],E_{\alpha-2}(\Omega,\mu))$.
	\end{thm}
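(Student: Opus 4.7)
The plan is a Picard/contraction argument built on Theorem \ref{thm:wmain1} as the linear solution operator. For $v\in L^\infty([0,T],\operatorname{dom}\mathcal{E})$, define $\Phi(v)$ to be the unique weak solution of the linear wave equation $\partial_{tt}w-\Delta_\mu w=F(v)$, $w(0)=g$, $\partial_tw(0)=h$, produced by Theorem \ref{thm:wmain1}. Since $F\in{\rm Lip}(\operatorname{dom}\mathcal{E})$, the map $t\mapsto F(v(t))$ belongs to $L^\infty([0,T],\operatorname{dom}\mathcal{E})$, so $\Phi$ is a well-defined self-map of this space, and a fixed point of $\Phi$ is exactly a weak solution of \eqref{eq:wnonlinear} in the sense of Definition \ref{def:ww}. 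The claimed explicit formula for $u(t)$ is then just the Duhamel expression \eqref{eq:wut1} of Theorem \ref{thm:wmain1}(c) with $\gamma_k(\tau)=\langle F(u(\tau)),\varphi_k\rangle_\mu$.

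The heart of the proof is a contraction estimate for $\Phi$ on $L^\infty([0,T_0],\operatorname{dom}\mathcal{E})$ with $T_0$ small. Cancelling the free-evolution terms, one has
\begin{align*}
\|\Phi(v_1)(t)-\Phi(v_2)(t)\|_{\operatorname{dom}\mathcal{E}}^2 = \sum_{k=1}^\infty\Big|\int_0^t\sin(\sqrt{\lambda_k}(t-\tau))\langle F(v_1(\tau))-F(v_2(\tau)),\varphi_k\rangle_\mu\,d\tau\Big|^2,
\end{align*}
and bounding $|\sin|\leq 1$, applying Cauchy--Schwarz in $\tau$, Parseval in $k$, the embedding $\operatorname{dom}\mathcal{E}\hookrightarrow L^2(\Omega,\mu)$ coming from \eqref{eq:PI}/\eqref{eq:PIN}, and the Lipschitz hypothesis yields
\begin{align*}
\|\Phi(v_1)-\Phi(v_2)\|_{L^\infty([0,T_0],\operatorname{dom}\mathcal{E})} \leq C\,T_0\,\|v_1-v_2\|_{L^\infty([0,T_0],\operatorname{dom}\mathcal{E})}.
\end{align*}
Choosing $T_0$ with $CT_0<1$, Banach's fixed point theorem gives a unique local solution on $[0,T_0]$. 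To extend to $[0,T]$, I partition the interval into finitely many sub-intervals of length less than $T_0$ and iterate, using the end data $(u(kT_0),\partial_tu(kT_0))$ in $\operatorname{dom}\mathcal{E}\times L^2(\Omega,\mu)$ (controlled by Theorem \ref{thm:wmain1}(a), (e)) as new Cauchy data; global uniqueness follows either from the same iteration or from the Gronwall/energy argument of Theorem \ref{thm:wmain1}(c) applied to the difference of two solutions, since $F(u_1)-F(u_2)$ is absorbed using the Lipschitz bound.

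For the additional regularity claim I would re-run the fixed-point argument in the smaller space $L^\infty([0,T_0],E_\alpha(\Omega,\mu))$. The hypothesis $F\in{\rm Lip}(E_\alpha(\Omega,\mu))$ keeps $F(v)$ in this space, and Theorem \ref{thm:wmain1}(d) (with $g\in E_\alpha$ and $f=F(v)\in L^\infty([0,T_0],E_\alpha)$) then lands $\Phi(v)$ in $L^\infty([0,T_0],E_\alpha(\Omega,\mu))$; the contraction estimate above repeats with the weight $\lambda_k^\alpha$ inserted, and by uniqueness in the larger space the resulting fixed point must coincide with $u$. The stated regularity of $\partial_tu$ and $\partial_{tt}u$ then follows by coefficient-wise $E_{\alpha-1}$- and $E_{\alpha-2}$-norm estimates on the series $H(t)$ and $K(t)$ from \eqref{eq:wHt}, \eqref{eq:wK}, invoking the assumption $h\in E_{\alpha-1}$ for the control of the $\beta_k$ terms. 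I expect the principal technical difficulty to lie in the contraction estimate in the case $\partial\Omega=\emptyset$: when $\lambda_0=0$, $\|\cdot\|_{\operatorname{dom}\mathcal{E}}$ is only a seminorm and the constant mode of $F(v_1)-F(v_2)$ is invisible to it, so one must either work on the quotient $\operatorname{dom}\mathcal{E}/\{\text{constants}\}$ or incorporate an auxiliary $L^2(\Omega,\mu)$ term into the contraction norm to control the zeroth Fourier coefficient.
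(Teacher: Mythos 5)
Your proposal is correct and follows essentially the same route as the paper, which omits its own proof and simply defers to the contraction-mapping/Picard argument of Ngai--Tang (2023, Theorem 1.1): one solves the linear problem with forcing $F(v)$ via Theorem \ref{thm:wmain1}, verifies that the Duhamel map is a contraction on $L^\infty([0,T_0],\operatorname{dom}\mathcal{E})$ for small $T_0$ using the Lipschitz hypothesis and the embedding $\operatorname{dom}\mathcal{E}\hookrightarrow L^2(\Omega,\mu)$, iterates to cover $[0,T]$, and bootstraps regularity in $E_\alpha(\Omega,\mu)$. Your closing remark about the $\lambda_0=0$ mode in the $\partial\Omega=\emptyset$ case is a genuine point of care that the paper glosses over, and handling it on the quotient by constants (or by adding an $L^2(\Omega,\mu)$ term to the contraction norm) as you suggest is the right fix.
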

	\begin{proof}
	The proof of this theorem is similar to that of \cite[Theorem 1.1]{Ngai-Tang_2023} and is omitted.
	\end{proof}

	\section{Weak solutions of heat equations} \label{S:heat}
	\setcounter{equation}{0}
	In this section, we let $M$ be a complete oriented smooth Riemannian $n$-manifold. Let  $\Omega\subseteq M$ be a bounded open set and let $\mu$ be a finite positive Borel measure on $M$ with ${\rm supp}(\mu)\subseteq\overline{\Omega}$. 
	
	\begin{defi}\label{def:hw}
		Let $0<T<\infty$. Let $g\in \operatorname{dom}\mathcal{E}$ and $f\in L^\infty([0,T],\operatorname{dom}\mathcal{E})$. A function $u\in L^\infty([0,T],\operatorname{dom}\mathcal{E})$, with $\partial_tu \in L^\infty([0,T],(\operatorname{dom}\mathcal{E})')$ is a {\em weak solution} of the heat equation 
		\begin{align}\label{eq:hlinear}
			\left\{
			\begin{array}{lll}
				\partial_tu-\Delta_\mu u=f(t)\quad &\text{on}\,\,\,\Omega\times[0,T],\\
				u=0\quad  &\text{on}\,\,\,\partial\Omega\times[0,T],\\
				u=g\quad  &\text{on}\,\,\,\Omega\times\{t=0\},\\
			\end{array}
			\right.
		\end{align}
		if the following conditions are satisfied:
		\begin{enumerate}
			\item[(1)] $\langle \partial_tu,v \rangle+\mathcal{E}(u,v)=\langle f(t),v\rangle_\mu$ for each $v\in \operatorname{dom}\mathcal{E}$ and Lebesgue a.e. $t\in [0,T]$;
			\item[(2)] $u(0)=g$.
		\end{enumerate}
	\end{defi}	
	\begin{rema}\label{rema:5.1}
		Here we comment on Definition \ref{def:hw}.
		\begin{enumerate}
			\item[(a)] The boundary condition $u|_{\partial \Omega}=0$ in \eqref{eq:hlinear} is included in the assumption $u(t)\in \operatorname{dom}\mathcal{E}$. If $u\in L^\infty([0,T],\operatorname{dom}\mathcal{E})$ and $\partial_tu\in L^\infty([0,T],(\operatorname{dom}\mathcal{E})')$, then $u\in C([0,T],\\ L^2(\Omega,\mu))$, and thus the initial condition $u(0)=g$ makes sense.
			\item[(b)] Condition (1) is equivalent to
			\begin{align*}
				\partial_tu+Lu=f(t)\quad \text{in}\,\,(\operatorname{dom}\mathcal{E})'\,\,\text{for Lebesgue a.e.}\,\,t\in [0,T],
			\end{align*}
			where $L$ is defined as in \eqref{eq:L}.
		\end{enumerate}
	\end{rema}
	Let $\left\{\varphi_{k}\right\}_{k=0}^{\infty}$ be an orthonormal basis  of $L^{2}(\Omega, \mu)$ such that  $-\Delta_{\mu}\varphi_{k}=\lambda_k\varphi_{k}$ and let 
	\begin{align*}
		g:=\sum_{k=0}^\infty b_k\varphi_k\in L^2(\Omega,\mu)\quad\text{and}\quad f(t):=\sum_{k=0}^\infty \beta_k(t)\varphi_k\in L^2([0,T], L^2(\Omega,\mu)),
	\end{align*}
	where $\beta_k(t):=\langle f(t), \varphi_k\rangle_\mu$ for $k\geq 1$. Define
	\begin{align*}
		u(t):=\sum_{k=0}^\infty b_ke^{-\lambda_kt}\varphi_k+\sum_{k=0}^\infty \Big(\int_0^te^{-\lambda_k(t-\tau)}\beta_k(\tau)d\tau\Big)\varphi_k
	\end{align*}
	and
	\begin{align}\label{eq:hK}
		K(t):=-\sum_{k=0}^\infty b_k\lambda_ke^{-\lambda_kt}\varphi_k+f(t)-\sum_{k=0}^\infty\lambda_k \Big(\int_0^te^{-\lambda_k(t-\tau)}\beta_k(\tau)d\tau\Big)\varphi_k.
	\end{align}
	\begin{thm}\label{thm:hmain1}
		Let $M$ be a complete oriented smooth Riemannian $n$-manifold. Let  $\Omega\subseteq M$ be a bounded open set and let $\mu$ be a finite positive Borel measure on $M$ with ${\rm supp}(\mu)\subseteq\overline{\Omega}$.  Assume that $\underline{\operatorname{dim}}_{\infty}(\mu)>n-2$.
		Let $g$, $f(t)$, $u(t)$, and $K(t)$ be defined as above. If $g\in \operatorname{dom}\mathcal{E}$ and $f(t)\in L^\infty([0,T], \operatorname{dom}\mathcal{E})$, then the following hold:
		\begin{enumerate}
			\item[(a)] $\partial_tu=K(t)$ in $(\operatorname{dom}\mathcal{E})'$ for Lebesgue a.e. $t\in [0,T]$.
			\item[(b)] $u(t)$ is the unique weak solution of the heat equation \eqref{eq:hlinear}.
			\item[(c)] If $g\in E_{\alpha}(\Omega,\mu)$ and $f\in L^\infty([0,T],E_{\alpha}(\Omega,\mu))$, where $\alpha\geq 2$, then $u(t)\in L^\infty([0,T],\\E_{\alpha}(\Omega,\mu))$ and $\partial_t u(t)\in L^\infty([0,T],E_{\alpha-2}(\Omega,\mu))$.
			\item[(d)] If $f\equiv0$, then
			\begin{align*}
				\|u(t)\|_\mu\leq\|g\|_\mu\quad\text{and}\quad \mathcal{E}(u,u)\leq\mathcal{E}(g,g)\quad \text{for all}\,\,t\in [0,T].
			\end{align*}
		\end{enumerate}
	\end{thm}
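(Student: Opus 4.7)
The plan is to parallel the argument for Theorem \ref{thm:wmain1}, with the crucial simplification that the semigroup factor $e^{-\lambda_k t}$ replaces the oscillatory $\sin/\cos$ factors. Write $u(t)=\sum_{k=0}^\infty c_k(t)\varphi_k$ and $K(t)=\sum_{k=0}^\infty d_k(t)\varphi_k$, where
\begin{align*}
c_k(t)&=b_ke^{-\lambda_kt}+\int_0^t e^{-\lambda_k(t-\tau)}\beta_k(\tau)\,d\tau,\\
d_k(t)&=-\lambda_k b_k e^{-\lambda_kt}+\beta_k(t)-\lambda_k\int_0^t e^{-\lambda_k(t-\tau)}\beta_k(\tau)\,d\tau.
\end{align*}
Each pair $(c_k,d_k)$ solves the scalar ODE $c_k'(t)+\lambda_k c_k(t)=\beta_k(t)$ pointwise a.e., so $c_k'(t)=d_k(t)$ for Lebesgue a.e.\ $t\in[0,T]$.

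For part (a), I would follow the template of Theorem \ref{thm:wmain1}(a)--(b), working in the norm $\|\cdot\|_{(\operatorname{dom}\mathcal{E})'}$ for which $\|\sum a_k\varphi_k\|^2=\sum_{k\geq 1}a_k^2/\lambda_k$. Set $s_k(t,h):=(c_k(t+h)-c_k(t))/h-d_k(t)$; then $s_k(t,h)\to 0$ pointwise a.e. The key uniform bound uses $|e^{-\lambda_k(t+h)}-e^{-\lambda_k t}|\leq \lambda_k|h|$ (mean value theorem) together with splitting the Duhamel integral as
$\int_t^{t+h}e^{-\lambda_k(t+h-\tau)}\beta_k(\tau)\,d\tau+\int_0^t(e^{-\lambda_k(t+h-\tau)}-e^{-\lambda_k(t-\tau)})\beta_k(\tau)\,d\tau$ and applying Cauchy--Schwarz, which yields $|d_k(t)|^2/\lambda_k\leq C(\lambda_k|b_k|^2+|\beta_k(t)|^2/\lambda_k+\int_0^T|\beta_k(\tau)|^2\,d\tau)$ and a similar bound for $|s_k(t,h)|^2/\lambda_k$. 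The majorant series is summable because $g\in\operatorname{dom}\mathcal{E}$ gives $\sum|b_k|^2\lambda_k<\infty$, while $f\in L^\infty([0,T],\operatorname{dom}\mathcal{E})\subseteq L^2([0,T],L^2(\Omega,\mu))$ controls the remaining $\beta_k$-terms. Weierstrass' M-test then interchanges limit and summation, giving $\partial_t u=K(t)$ in $(\operatorname{dom}\mathcal{E})'$.

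For part (b), once (a) is established, direct computation shows $Lu(t)=\sum\lambda_k c_k(t)\varphi_k$ in $(\operatorname{dom}\mathcal{E})'$, and adding $d_k(t)+\lambda_k c_k(t)=\beta_k(t)$ gives $\partial_t u+Lu=f(t)$; the initial condition $u(0)=g$ is immediate. Uniqueness follows from the energy method: if $f\equiv g\equiv 0$ and $u$ is a weak solution, then testing against $v=u(t)\in\operatorname{dom}\mathcal{E}$ gives $\langle\partial_t u,u\rangle+\mathcal{E}(u,u)=0$, hence $\tfrac{d}{dt}\|u\|_\mu^2=-2\mathcal{E}(u,u)\leq 0$, and $u(0)=0$ forces $u\equiv 0$.

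For part (c), the $E_\alpha$-regularity is an elementary spectral estimate: using $(\int_0^t e^{-\lambda_k(t-\tau)}|\beta_k(\tau)|\,d\tau)^2\leq \frac{1}{2\lambda_k}\int_0^T|\beta_k(\tau)|^2\,d\tau$, one obtains $\sum|c_k(t)|^2\lambda_k^\alpha\leq 2\|g\|_{E_\alpha}^2+C\sum\lambda_k^{\alpha-1}\int_0^T|\beta_k|^2\,d\tau$, which is bounded whenever $f\in L^\infty([0,T],E_\alpha)$ (and a fortiori $E_{\alpha-1}$). For $\partial_t u$, the identity $d_k=-\lambda_k c_k+\beta_k$ gives $\sum|d_k|^2\lambda_k^{\alpha-2}\leq 2\|u\|_{E_\alpha}^2+2\|f\|_{E_{\alpha-2}}^2$. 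Part (d) is immediate: with $f\equiv 0$ one has $c_k(t)=b_k e^{-\lambda_k t}$, so $\|u(t)\|_\mu^2=\sum|b_k|^2e^{-2\lambda_k t}\leq\|g\|_\mu^2$ and $\mathcal{E}(u,u)=\sum|b_k|^2\lambda_k e^{-2\lambda_k t}\leq\mathcal{E}(g,g)$. The main technical obstacle is the bookkeeping of bounds in part (a) to ensure the Weierstrass-style argument goes through in the weak norm $(\operatorname{dom}\mathcal{E})'$; once that is handled cleanly as in Theorem \ref{thm:wmain1}, the remainder is straightforward.
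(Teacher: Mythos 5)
Your plan is correct and follows essentially the same route as the paper, whose proof of this theorem is just a pointer to the argument of \cite[Theorem 3.1]{Ngai-Tang_2023} as carried out in detail for the wave equation in Theorem \ref{thm:wmain1}: spectral decomposition $u=\sum_k c_k\varphi_k$, termwise verification of $c_k'=d_k$ with a Weierstrass M-test majorant (here in the $(\operatorname{dom}\mathcal{E})'$ norm, using $|e^{-\lambda_k(t+h)}-e^{-\lambda_k t}|\leq\lambda_k|h|$ and Cauchy--Schwarz on the Duhamel term), the energy method for uniqueness, and direct spectral estimates for parts (c) and (d). Nothing further is needed.
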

	\begin{proof}
With some modifications, one can prove this theorem by following the arguments in  \cite[Theorem 3.1]{Ngai-Tang_2023}.
	\end{proof}
	
		\begin{thm}\label{thm:hmain2}
	Let $n$, $M$, and $\mu$ be as in Theorem \ref{thm:wmain2}.   Assume that $\underline{\operatorname{dim}}_{\infty}(\mu)>n-2$ and $F(\cdot)\in {\rm Lip}(\operatorname{dom}\mathcal{E})$. 
		Let $g=\sum_{k=0}^\infty b_k\varphi_k\in \operatorname{dom}\mathcal{E}$. Then the  semi-linear heat equation
		\eqref{eq:hnonlinear}
		has a unique weak solution $u(t)\in L^\infty([0,T], \operatorname{dom}\mathcal{E})$, which can be expressed as
	\begin{align*}
				u(t)= \sum_{k=0}^\infty b_ke^{-\lambda_kt}\varphi_k+\sum_{k=0}^\infty \Big(\int_0^te^{-\lambda_k(t-\tau)}\langle F(u(\tau)),\varphi_k\rangle_\mu d\tau\Big)\varphi_k.
		\end{align*}
		Moreover, under the additional assumptions that $F(\cdot)\in {\rm Lip}(E_{\alpha}(\Omega,\mu))$ and  $g\in E_{\alpha}(\Omega,\mu)$, where $\alpha\geq 2$, we have  $u(t)\in L^\infty([0,T],E_{\alpha}(\Omega,\mu))$ and $\partial_t u(t)\in L^\infty([0,T],E_{\alpha-2}(\Omega,\mu))$.
	\end{thm}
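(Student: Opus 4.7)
The plan is to apply Banach's fixed point theorem to the Duhamel operator
$$\Phi(u)(t) := \sum_{k=0}^\infty b_k e^{-\lambda_k t}\varphi_k + \sum_{k=0}^\infty \Big(\int_0^t e^{-\lambda_k(t-\tau)}\langle F(u(\tau)),\varphi_k\rangle_\mu\, d\tau\Big)\varphi_k$$
defined on $L^\infty([0,T],\operatorname{dom}\mathcal{E})$. A fixed point of $\Phi$ is precisely the candidate solution in the statement, and because $\Phi(u)$ is exactly the solution of the linear heat equation with initial data $g$ and forcing $f(t) := F(u(t))$ produced by Theorem \ref{thm:hmain1}, any such fixed point automatically satisfies Definition \ref{def:hw}.

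I would first verify that $\Phi$ maps $L^\infty([0,T],\operatorname{dom}\mathcal{E})$ into itself. Because $F \in \operatorname{Lip}(\operatorname{dom}\mathcal{E})$, the bound $\|F(u(\tau))\|_{\operatorname{dom}\mathcal{E}} \le \|F(0)\|_{\operatorname{dom}\mathcal{E}} + c\|u(\tau)\|_{\operatorname{dom}\mathcal{E}}$ places $F(u(\cdot))$ in $L^\infty([0,T],\operatorname{dom}\mathcal{E})$, and Theorem \ref{thm:hmain1}(b) then places $\Phi(u)$ in the required space. The core step is the contraction estimate. For $u_1,u_2 \in L^\infty([0,T],\operatorname{dom}\mathcal{E})$, setting $\gamma_k(\tau):= \langle F(u_1(\tau))-F(u_2(\tau)),\varphi_k\rangle_\mu$, the Cauchy–Schwarz inequality together with $\lambda_k \int_0^t e^{-2\lambda_k(t-\tau)}\,d\tau \le 1/2$ yields
$$\|\Phi(u_1)(t)-\Phi(u_2)(t)\|_{\operatorname{dom}\mathcal{E}}^2 = \sum_{k=0}^\infty \lambda_k\Big|\int_0^t e^{-\lambda_k(t-\tau)}\gamma_k(\tau)\,d\tau\Big|^2 \le \frac{1}{2}\int_0^t \|F(u_1(\tau))-F(u_2(\tau))\|_\mu^2 \,d\tau.$$
Combining the Lipschitz bound for $F$ with the Poincaré-type embedding $\operatorname{dom}\mathcal{E} \hookrightarrow L^2(\Omega,\mu)$ (granted by (MPID)/(MPIE)) controls the right-hand side by a constant multiple of $\int_0^t \|u_1(\tau)-u_2(\tau)\|_{\operatorname{dom}\mathcal{E}}^2 \,d\tau$. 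Either choosing $T$ small or switching to the equivalent weighted norm $\|u\|_\beta := \sup_t e^{-\beta t}\|u(t)\|_{\operatorname{dom}\mathcal{E}}$ with $\beta$ large makes $\Phi$ a strict contraction, so the Banach fixed point theorem produces the unique $u$ satisfying $u = \Phi(u)$; uniqueness among all weak solutions in the sense of Definition \ref{def:hw} then follows because any weak solution must itself satisfy this integral identity by Theorem \ref{thm:hmain1}(b).

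For the final regularity assertion, I would repeat the fixed-point argument on the closed subspace $L^\infty([0,T],E_\alpha(\Omega,\mu))$ of $L^\infty([0,T],\operatorname{dom}\mathcal{E})$ for $\alpha\ge 2$. Since $F \in \operatorname{Lip}(E_\alpha(\Omega,\mu))$ and $g \in E_\alpha(\Omega,\mu)$, Theorem \ref{thm:hmain1}(c) shows $\Phi$ preserves this subspace, and the contraction estimate is unaffected; the time-derivative bound $\partial_t u \in L^\infty([0,T],E_{\alpha-2}(\Omega,\mu))$ then drops out directly from the equation $\partial_t u = \Delta_\mu u + F(u)$ together with the spectral definition of $E_{\alpha-2}$. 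The main obstacle I anticipate is the contraction estimate in the closed-manifold case $\partial\Omega = \emptyset$, where $\|\cdot\|_{\operatorname{dom}\mathcal{E}}$ is only a norm modulo constants and $\lambda_0 = 0$; there the $k=0$ mode must be treated separately using (MPIE) to control $\|F(u_1)-F(u_2)\|_\mu$, while the spectral gap $\lambda_k > 0$ for $k\ge 1$ still drives the decay estimate for the tail of the sum.
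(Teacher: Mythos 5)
Your proposal is correct and follows essentially the same route the paper takes: the paper's proof of Theorem \ref{thm:hmain2} is a citation to the Banach fixed-point argument of \cite[Theorem 1.1]{Ngai-Tang_2023} applied to the Duhamel operator, with the linear theory of Theorem \ref{thm:hmain1} supplying the self-mapping, the equivalence of fixed points with weak solutions, and the $E_\alpha$ regularity. Your contraction estimate via Cauchy--Schwarz and $\lambda_k\int_0^t e^{-2\lambda_k(t-\tau)}\,d\tau\le 1/2$, and your separate treatment of the $\lambda_0=0$ mode in the $\partial\Omega=\emptyset$ case (where $\|\cdot\|_{\operatorname{dom}\mathcal{E}}$ is only a norm modulo constants), are exactly the points that need care, and you handle them correctly.
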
	
	\begin{proof}The proof follows by using the results of Theorem \ref{thm:hmain1}, and a similar argument as that in the proof of  \cite[Theorem 1.1]{Ngai-Tang_2023}.
	\end{proof}
	
	\section{Weak solutions of Schr\"odinger equations} \label{S:sch}
	\setcounter{equation}{0}
	In this section, we also let $M$ be a complete oriented smooth Riemannian $n$-manifold. Let  $\Omega\subseteq M$ be a bounded open set and let $\mu$ be a finite positive Borel measure on $M$ with ${\rm supp}(\mu)\subseteq\overline{\Omega}$.

	\begin{defi}\label{def:sw}
		Let $0<T<\infty$. Let $g\in \operatorname{dom}\mathcal{E}$ and $f\in L^\infty([0,T],\operatorname{dom}\mathcal{E})$. A function $u\in L^\infty([0,T],\operatorname{dom}\mathcal{E})$, with $\partial_tu \in L^\infty([0,T],(\operatorname{dom}\mathcal{E})')$ is a {\em weak solution} of the Schr\"odinger equation
		\begin{align}\label{eq:slinear}
			\left\{
			\begin{array}{lll}
				i\partial_tu+\Delta_\mu u=f(t)\quad &\text{on}\,\,\,\Omega\times[0,T],\\
				u=0\quad  &\text{on}\,\,\,\partial\Omega\times[0,T],\\
				u=g\quad  &\text{on}\,\,\,\Omega\times\{t=0\},\\
			\end{array}
			\right.
		\end{align}
		if the following conditions are satisfied:
		\begin{enumerate}
			\item[(1)] $\langle i\partial_tu,v \rangle-\mathcal{E}(u,v)=\langle f(t),v\rangle_\mu$ for each $v\in \operatorname{dom}\mathcal{E}$ and Lebesgue a.e. $t\in [0,T]$;
			\item[(2)] $u(0)=g$.
		\end{enumerate}
	\end{defi}	
	\begin{rema}\label{rema:6.1}
		Here we comment on Definition \ref{def:sw}.
		\begin{enumerate}
			\item[(a)] The boundary condition $u|_{\partial \Omega}=0$ in \eqref{eq:slinear} is included in the assumption $u(t)\in \operatorname{dom}\mathcal{E}$. If $u\in L^\infty([0,T],\operatorname{dom}\mathcal{E})$ and $\partial_tu\in L^\infty([0,T],(\operatorname{dom}\mathcal{E})')$, then $u\in C([0,T],\\ L^2(\Omega,\mu))$, and thus the initial condition $u(0)=g$ makes sense.
			\item[(b)] Condition (1) is equivalent to
			\begin{align*}
				i\partial_tu-Lu=f(t)\quad \text{in}\,\,(\operatorname{dom}\mathcal{E})'\,\,\text{for Lebesgue a.e.}\,\,t\in [0,T],
			\end{align*}
			where $L$ is defined as in \eqref{eq:L}.
		\end{enumerate}
	\end{rema}
	Let 
	\begin{align*}
		g=\sum_{k=0}^\infty b_k\varphi_k\in L^2(\Omega,\mu)\quad\text{and}\quad f(t)=\sum_{k=0}^\infty \beta_k(t)\varphi_k\in L^2([0,T], L^2(\Omega,\mu)),
	\end{align*}
	where $\beta_k(t)=\langle f(t), \varphi_k\rangle_\mu$ for $k\geq 1$. Define
	\begin{align*}
		u(t):=\sum_{k=0}^\infty b_ke^{-i\lambda_kt}\varphi_k-i\sum_{k=0}^\infty\Big (\int_0^te^{-i\lambda_k(t-\tau)}\beta_k(\tau)d\tau\Big)\varphi_k
	\end{align*}
	and
	\begin{align}\label{eq:sK}
		K(t):=-i\sum_{k=0}^\infty b_k\lambda_ke^{-i\lambda_kt}\varphi_k-if(t)-\sum_{k=0}^\infty\lambda_k \Big(\int_0^te^{-i\lambda_k(t-\tau)}\beta_k(\tau)d\tau\Big)\varphi_k.
	\end{align}
	\begin{thm}\label{thm:smain1}
		Let $M$ be a complete oriented smooth Riemannian $n$-manifold. Let  $\Omega\subseteq M$ be a bounded open set and let $\mu$ be a finite positive Borel measure on $M$ with ${\rm supp}(\mu)\subseteq\overline{\Omega}$.  Assume that $\underline{\operatorname{dim}}_{\infty}(\mu)>n-2$.
		Let $g$, $f(t)$, $u(t)$, and $K(t)$ be defined as above. If $g\in \operatorname{dom}\mathcal{E}$ and $f(t)\in L^\infty([0,T], \operatorname{dom}\mathcal{E})$, then the following hold:
		\begin{enumerate}
			\item[(a)] $\partial_tu=K(t)$ in $(\operatorname{dom}\mathcal{E})'$ for Lebesgue a.e. $t\in [0,T]$.
			\item[(b)] $u(t)$ is the unique weak solution of the Schr$\ddot{o}$dinger equation \eqref{eq:slinear}.
			\item[(c)] If $g\in E_{\alpha}(\Omega,\mu)$ and $f\in L^\infty([0,T],E_{\alpha}(\Omega,\mu))$, where $\alpha\geq 2$, then $u(t)\in L^\infty([0,T],\\E_{\alpha}(\Omega,\mu))$ and $\partial_t u(t)\in L^\infty([0,T],E_{\alpha-2}(\Omega,\mu))$.
			\item[(d)] If $f\equiv0$, then
			\begin{align*}
				\|u(t)\|_\mu=\|g\|_\mu\quad\text{and}\quad \mathcal{E}(u,u)=\mathcal{E}(g,g)\quad \text{for all}\,\,t\in [0,T].
			\end{align*}
		\end{enumerate}
	\end{thm}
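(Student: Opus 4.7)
I would prove Theorem \ref{thm:smain1} in close parallel with Theorems \ref{thm:wmain1} and \ref{thm:hmain1}, exploiting the explicit spectral expansion of $u(t)$ and the fact that $|e^{-i\lambda_k t}|=1$ for all $t$, which replaces the decay/oscillation estimates used in the heat and wave cases.

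For part (a), the plan is to write $u(t)=\sum_{k=0}^\infty c_k(t)\varphi_k$ with $c_k(t)=b_ke^{-i\lambda_k t}-i\int_0^t e^{-i\lambda_k(t-\tau)}\beta_k(\tau)\,d\tau$ and verify that the classical derivative $c_k'(t)$ equals the coefficient $d_k(t)$ of $K(t)$ in \eqref{eq:sK} for Lebesgue a.e.\ $t$. The task is then to control the difference quotient series in the $(\operatorname{dom}\mathcal{E})'$ norm, whose weights are $1/\lambda_k$. Using $|e^{-i\lambda_k h}-1|\leq \lambda_k|h|$ together with a Cauchy--Schwarz bound on $\int_t^{t+h}e^{-i\lambda_k(t+h-\tau)}\beta_k(\tau)\,d\tau$, the dominating series will be controlled by $\sum_k(|b_k|^2\lambda_k+T\int_0^T|\beta_k(\tau)|^2\,d\tau+\esssup_t|\beta_k(t)|^2/\lambda_k)$, which converges under the hypotheses $g\in\operatorname{dom}\mathcal{E}$ and $f\in L^\infty([0,T],\operatorname{dom}\mathcal{E})$. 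Weierstrass' M-test and termwise passage to the limit then give $\|h^{-1}(u(t+h)-u(t))-K(t)\|_{(\operatorname{dom}\mathcal{E})'}\to 0$, exactly as in the proof of Theorem \ref{thm:wmain1}(a).

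For part (b), existence is immediate from (a) upon verifying $u(0)=g$ and computing $Lu=\sum_k \lambda_k c_k(t)\varphi_k$, which yields $i\partial_tu-Lu=f$ in $(\operatorname{dom}\mathcal{E})'$ by a direct coefficient comparison. For uniqueness, I take $f\equiv 0$, $g\equiv 0$ and use the energy identity: since $u\in L^\infty([0,T],\operatorname{dom}\mathcal{E})$ with $\partial_tu\in L^\infty([0,T],(\operatorname{dom}\mathcal{E})')$, the function $t\mapsto\|u(t)\|_\mu^2$ is absolutely continuous and its derivative equals $2\,\mathrm{re}\,\langle\partial_tu,u\rangle$ (pairing between $(\operatorname{dom}\mathcal{E})'$ and $\operatorname{dom}\mathcal{E}$). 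Testing the weak formulation against $v=u$ and taking real parts gives
\begin{align*}
\tfrac{d}{dt}\|u(t)\|_\mu^2=2\,\mathrm{re}\,\langle\partial_tu,u\rangle=2\,\mathrm{re}\bigl(-i\mathcal{E}(u,u)\bigr)=0,
\end{align*}
since $\mathcal{E}(u,u)$ is real. Combined with $u(0)=0$ this forces $u\equiv 0$. This is the one step where I expect the main difficulty, because the identity $\frac{d}{dt}\|u\|_\mu^2=2\,\mathrm{re}\,\langle\partial_tu,u\rangle$ for $u\in L^\infty([0,T],\operatorname{dom}\mathcal{E})$ with $\partial_tu\in L^\infty([0,T],(\operatorname{dom}\mathcal{E})')$ is a standard Gelfand-triple fact but needs to be justified carefully in this setting; the argument in the uniqueness portion of Theorem \ref{thm:wmain1}(c) should transfer after mollification in $t$.

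For part (c), the unimodularity $|e^{-i\lambda_k t}|=1$ lets me estimate
\begin{align*}
\|u(t)\|_{E_\alpha(\Omega,\mu)}^2\leq 2\sum_{k=0}^\infty|b_k|^2\lambda_k^\alpha+2T\sum_{k=0}^\infty\lambda_k^\alpha\int_0^T|\beta_k(\tau)|^2\,d\tau,
\end{align*}
so $u\in L^\infty([0,T],E_\alpha(\Omega,\mu))$; the analogous estimate for $K(t)$ using the explicit formula \eqref{eq:sK} gives $\partial_tu\in L^\infty([0,T],E_{\alpha-2}(\Omega,\mu))$. Part (d) is a direct consequence of orthonormality of $\{\varphi_k\}$: when $f\equiv 0$, $u(t)=\sum_k b_ke^{-i\lambda_k t}\varphi_k$, so $\|u(t)\|_\mu^2=\sum_k|b_k|^2=\|g\|_\mu^2$ and $\mathcal{E}(u(t),u(t))=\sum_k|b_k|^2\lambda_k=\mathcal{E}(g,g)$ for all $t$. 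Since the scheme follows Theorems \ref{thm:wmain1} and \ref{thm:hmain1} closely, in the final write-up I would only indicate these modifications and refer back to those proofs for the routine Weierstrass M-test and termwise convergence verifications.
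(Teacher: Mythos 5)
Your proposal is correct and follows essentially the same route as the paper, which itself omits the argument and simply refers to the analogous spectral-expansion proofs of Theorems \ref{thm:wmain1} and \ref{thm:hmain1} (and to Ngai--Tang): termwise differentiation justified by a Weierstrass M-test in the $(\operatorname{dom}\mathcal{E})'$ norm, uniqueness via the energy identity $\frac{d}{dt}\|u\|_\mu^2=0$ (which, with the paper's real inner product $\langle u,v\rangle_\mu={\rm re}\int u\bar v\,d\mu$, is obtained by testing the weak formulation against $iu$ rather than $u$ --- the same computation you phrase as taking real parts of $-i\mathcal{E}(u,u)$), and parts (c), (d) from the unimodularity of $e^{-i\lambda_kt}$ and orthonormality of $\{\varphi_k\}$. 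The only bookkeeping slip is that the dominating series for the difference quotients in the dual norm carries a factor $\lambda_k$ on the $\int_0^T|\beta_k|^2\,d\tau$ terms, so it is the hypothesis $f\in L^\infty([0,T],\operatorname{dom}\mathcal{E})$ (not merely $f\in L^2([0,T],L^2(\Omega,\mu))$) that makes it converge, exactly as in the paper's wave-equation estimate \eqref{eq:Mk}.
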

	The proof of Theorem \ref{thm:smain1} is similar to that of \cite[Theorem 3.1]{Ngai-Tang_2023}; we omit the proof.
		We have the following theorem.	
	
\begin{thm}\label{thm:smain2}
		Let $n$, $M$, and $\mu$ be as in Theorem \ref{thm:wmain2}.  Assume that $\underline{\operatorname{dim}}_{\infty}(\mu)>n-2$ and $F(\cdot)\in {\rm Lip}(\operatorname{dom}\mathcal{E})$. 
		Let $g=\sum_{k=0}^\infty b_k\varphi_k\in \operatorname{dom}\mathcal{E}$. Then the semi-linear Schr$\ddot{o}$dinger equation \eqref{eq:snonlinear} has a unique weak solution $u(t)\in L^\infty([0,T], \operatorname{dom}\mathcal{E})$, which is given by
			\begin{align*}
				u(t)=\sum_{k=0}^\infty b_ke^{-i\lambda_kt}\varphi_k-i\sum_{k=0}^\infty \Big(\int_0^te^{-i\lambda_k(t-\tau)}\langle F(u(\tau)),\varphi_k\rangle_\mu d\tau\Big)\varphi_k.
		\end{align*}
		Moreover, under the additional assumptions that $F(\cdot)\in {\rm Lip}(E_{\alpha}(\Omega,\mu))$ and  $g\in E_{\alpha}(\Omega,\mu)$, where $\alpha\geq 2$, we have  $u(t)\in L^\infty([0,T],E_{\alpha}(\Omega,\mu))$ and $\partial_t u(t)\in L^\infty([0,T],E_{\alpha-2}(\Omega,\mu))$.
	\end{thm}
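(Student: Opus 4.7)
My plan is to prove Theorem \ref{thm:smain2} by a Banach fixed-point argument applied to the Duhamel map, in direct analogy with the proofs of Theorems \ref{thm:wmain2} and \ref{thm:hmain2}. Let $c$ denote the Lipschitz constant of $F$ on $\operatorname{dom}\mathcal{E}$ and fix $T_0 \in (0, T]$ to be chosen later. For $u\in X_{T_0}:=L^\infty([0,T_0],\operatorname{dom}\mathcal{E})$, define
$$
(\Phi u)(t):=\sum_{k=0}^\infty b_k e^{-i\lambda_k t}\varphi_k-i\sum_{k=0}^\infty\varphi_k\int_0^t e^{-i\lambda_k(t-\tau)}\langle F(u(\tau)),\varphi_k\rangle_\mu\,d\tau.
$$
By Theorem \ref{thm:smain1}(b), this is precisely the formula for the unique weak solution of the linear Schr\"odinger equation with initial datum $g$ and inhomogeneity $-iF(u)$. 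Since $F:\operatorname{dom}\mathcal{E}\to\operatorname{dom}\mathcal{E}$ is Lipschitz, $F(u(\cdot))\in L^\infty([0,T_0],\operatorname{dom}\mathcal{E})$ whenever $u\in X_{T_0}$, so Theorem \ref{thm:smain1} further shows $\Phi u\in X_{T_0}$; thus $\Phi$ is a well-defined self-map.

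The key step is a contraction estimate in the energy norm. Using $|e^{-i\lambda_k(t-\tau)}|=1$ together with the Cauchy--Schwarz inequality in $\tau$ and the identity $\|w\|_{\operatorname{dom}\mathcal{E}}^2=\sum_k\lambda_k|\langle w,\varphi_k\rangle_\mu|^2$, one obtains for $u,v\in X_{T_0}$ and every $t\in[0,T_0]$,
$$
\|(\Phi u)(t)-(\Phi v)(t)\|_{\operatorname{dom}\mathcal{E}}^2\leq t\int_0^t\sum_k\lambda_k|\langle F(u(\tau))-F(v(\tau)),\varphi_k\rangle_\mu|^2\,d\tau\leq c^2 T_0^2\|u-v\|_{X_{T_0}}^2.
$$
Choosing $T_0<1/c$ makes $\Phi$ a strict contraction on $X_{T_0}$, and the Banach fixed-point theorem then yields a unique $u\in X_{T_0}$ with $\Phi u=u$, which satisfies the claimed Duhamel formula. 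Because $T_0$ depends only on $c$ and not on the initial value, the solution extends to all of $[0,T]$ by re-running the same argument on $[T_0,2T_0]$ with initial datum $u(T_0)\in\operatorname{dom}\mathcal{E}$, and concatenating finitely many pieces; local uniqueness on each subinterval gives global uniqueness on $[0,T]$.

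For the additional regularity statement, I would repeat the fixed-point construction on $L^\infty([0,T_0],E_\alpha(\Omega,\mu))$, which is a Banach space by Proposition \ref{prop:2.8}, with contraction factor again bounded by $cT_0$ once one uses $\|w\|_{E_\alpha}^2=\sum_k\lambda_k^\alpha|\langle w,\varphi_k\rangle_\mu|^2$ and the Lipschitz hypothesis on $E_\alpha$. This places $u$ in $L^\infty([0,T],E_\alpha)$. Then $-iF(u)\in L^\infty([0,T],E_\alpha)$, and Theorem \ref{thm:smain1}(c) applied to the linear problem with this inhomogeneity yields $\partial_t u\in L^\infty([0,T],E_{\alpha-2})$.

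The principal obstacle is that, unlike in the heat case, the Schr\"odinger phases $e^{-i\lambda_k(t-\tau)}$ are unimodular, so the linear semigroup provides no smoothing or decay. The contraction factor must therefore come entirely from the time length $T_0$, which forces the estimate to be performed in the strong energy norm (uniformly over the eigenvalues $\lambda_k$) rather than in $L^2(\Omega,\mu)$. Once this small-time estimate is in place, the global extension to $[0,T]$ is a routine iteration because the short-time existence window depends only on the Lipschitz constant $c$.
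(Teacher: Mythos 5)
Your argument is correct and follows essentially the same route as the paper, whose proof of Theorem \ref{thm:smain2} simply invokes Theorem \ref{thm:smain1} together with the contraction-mapping/Picard-iteration argument of \cite[Theorem 1.1]{Ngai-Tang_2023} applied to exactly the Duhamel map $\Phi$ you write down. One cosmetic remark: in the paper's normalization the inhomogeneity fed into Theorem \ref{thm:smain1} is $F(u)$ itself rather than $-iF(u)$ (the factor $-i$ is already built into the Duhamel integral of the linear solution formula), but since multiplication by $-i$ is an isometry this does not affect any of your estimates.
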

		\begin{proof}[Proof of Theorem \ref{thm:smain2}] 
			This follows by using Theorem \ref{thm:smain1} and a similar proof as that of \cite[Theorem 1.1]{Ngai-Tang_2023}.
	\end{proof}

	\section{Examples of IFS} \label{S:exam1}
	\setcounter{equation}{0}
	In this section, we let $M$ be a complete $n$-dimensional smooth Riemannian manifold. Let $\Omega\subseteq M$ be a bounded open set. 
	Let $\{f_i\}_{i=1}^{m}$ be a finite set of {\em contractions} on $M$, i.e., for each $i$, there exists $r_i$ with $0<r_i<1$ such that
	\begin{align}\label{eq:j1.1}
		d_M(f_i(p),f_i(q))\leq r_i d_M(p,q)\quad \text{for all}\,\, p, q\in M.
	\end{align}
	Then there exists a unique nonempty compact set $K$ such that $K=\bigcup^m_{i=1}f_i(K)$ (see Hutchinson\cite{Hutchinson_1981}). We call a family of contractions $\{f_i\}_{i=1}^{m}$ on $M$ an {\em iterated function system} (IFS), and call $K$ the {\em invariant set} or {\em attractor} of the IFS. If equality in (\ref{eq:j1.1}) holds,  then $f_i$ is called a {\em contractive similitude}.  Let $(p_1,\ldots,p_m)$ be a {\em probability vector}, i.e., $p_i>0$ and $\sum_{i=1}^{m}p_i=1$. Then there is a unique Borel probability measure $\mu$ with $\supp (\mu)=K$, called the {\em invariant measure}, that satisfies
	\begin{align*}
		\mu=\sum_{i=1}^{m} p_i\mu\circ f_i^{-1}.
	\end{align*}
	
	For $\tau:=\left(i_{1}, \ldots, i_{k}\right)$, we denote by $|\tau|=k$ the length of $\tau$ and let $f_{\tau}:=f_{i_{1}} \circ \cdots \circ f_{i_{k}}$, $p_{\tau}:=p_{i_{1}}  \cdots  p_{i_{k}}$. For the invariant set $K$, we let $K_{\tau}:=f_{\tau}(K)$.
	We call $\left\{f_{i}\right\}_{i=1}^{m}:\overline{\Omega}\to \overline{\Omega}$ an IFS of {\em $bi$-Lipschitz contractions} if for each $i=$ $1, \ldots, m$, there exist $c_{i}, r_{i}$ with $0<c_{i} \leq r_{i}<1$ such that
	\begin{align}\label{eq:bi}
		c_{i}d_M(p,q) \leq d_M(f_{i}(p),f_{i}(q)) \leq r_{i}d_M(p,q) \quad \text { for all } p, q\in \overline{\Omega}.
	\end{align}
	
	We have the following lemma.
	
	\begin{lem}\label{lem:4.1}
		Let $M$ be a complete $n$-dimensional smooth Riemannian manifold. Let $\Omega\subseteq M$ be a bounded open set. 	Let $\mu$ be an invariant measure of an IFS $\left\{f_{i}\right\}_{i=1}^{m}$ of bi-Lipschitz contractions on $\overline{\Omega}$. Suppose the attractor $K$ is not a singleton. Then $\mu$ is upper $s$-regular for some $s>0$, and hence $\underline{\operatorname{dim}}_{\infty}(\mu)>0$.
	\end{lem}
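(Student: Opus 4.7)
The plan is to establish the upper $s$-regularity $\mu(B^M(p,r))\le C r^s$ for some $s>0$ and all small $r>0$; the conclusion $\underline{\dim}_\infty(\mu)>0$ then follows directly from definition \eqref{eq:dim}.

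First I would set $D:=\mathrm{diam}(K)>0$ (positive by the non-singleton hypothesis) and, for each $r\in(0,D]$, introduce the stopping-time antichain
\begin{align*}
\Lambda(r):=\bigl\{\tau=(i_1,\dots,i_k):\, r_\tau\le r/D<r_{\tau'}\bigr\},
\end{align*}
where $r_\tau:=r_{i_1}\cdots r_{i_k}$ and $\tau'$ denotes $\tau$ with its last symbol removed. Since $r_\tau\to 0$ as $|\tau|\to\infty$, $\Lambda(r)$ is a finite antichain with $\sum_{\tau\in\Lambda(r)}p_\tau=1$, and iterating $\mu=\sum_{i=1}^m p_i\,\mu\circ f_i^{-1}$ along $\Lambda(r)$ yields $\mu=\sum_{\tau\in\Lambda(r)}p_\tau\,\mu\circ f_\tau^{-1}$. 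Because $\mathrm{diam}(K_\tau)\le r_\tau D\le r$, a term contributes to $\mu(B^M(p,r))$ only if $K_\tau\cap B^M(p,r)\ne\emptyset$, and for such $\tau$ we have $K_\tau\subseteq B^M(p,2r)$. Writing $I(p,r):=\{\tau\in\Lambda(r):K_\tau\cap B^M(p,r)\ne\emptyset\}$ gives
\begin{align*}
\mu(B^M(p,r))\le\sum_{\tau\in I(p,r)}p_\tau.
\end{align*}

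Second, I would set $s_0:=\min_i(\log p_i/\log r_i)$, which is strictly positive because each $p_i,r_i\in(0,1)$. Then $p_i\le r_i^{s_0}$ for every $i$, and taking products, $p_\tau\le r_\tau^{s_0}\le(r/D)^{s_0}$ uniformly in $\tau\in\Lambda(r)$. It remains to bound $|I(p,r)|$ by $C r^{-\gamma}$ with $\gamma<s_0$. The cardinality bound would exploit the lower bi-Lipschitz constants together with the smooth Riemannian volume: for each $\tau\in I(p,r)$ the set $f_\tau(\overline{\Omega})\supseteq K_\tau$ has diameter $\le r_\tau\,\mathrm{diam}(\overline{\Omega})\le C r$ and so lies inside $B^M(p,C'r)$, while the Jacobian estimate implied by the lower bound in \eqref{eq:bi} gives $\nu(f_\tau(\overline{\Omega}))\ge c_0 c_\tau^n$. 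Comparing $\sum_{\tau\in I(p,r)}\nu(f_\tau(\overline{\Omega}))$ with the local volume estimate $\nu(B^M(p,C'r))\le C_1 r^n$ (valid uniformly for $p\in\overline{\Omega}$ and $r$ below the injectivity radius) through a Besicovitch-type bounded-multiplicity argument on $M$ produces $|I(p,r)|\le C_2 r^{-\gamma}$, and hence $\mu(B^M(p,r))\le C_3 r^{s_0-\gamma}$, which is the desired upper $(s_0-\gamma)$-regularity.

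The main obstacle is the cardinality bound on $I(p,r)$: because no separation hypothesis is assumed, the images $\{f_\tau(\overline{\Omega})\}_{\tau\in I(p,r)}$ may overlap substantially, so naive disjointness is unavailable. The required multiplicity control must be extracted by combining both sides of the bi-Lipschitz estimate in \eqref{eq:bi} with the doubling property of the Riemannian volume on the smooth manifold $M$, and calibrating the resulting exponent $\gamma$ against $s_0$ so as to guarantee $s_0-\gamma>0$ is the technically delicate part of the argument.
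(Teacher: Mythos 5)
Your first step (the stopping-time antichain $\Lambda(r)$, the identity $\mu=\sum_{\tau\in\Lambda(r)}p_\tau\,\mu\circ f_\tau^{-1}$, and the bound $p_\tau\le r_\tau^{s_0}$ with $s_0=\min_i(\log p_i/\log r_i)$) is correct, but the argument then stands or falls on the cardinality bound for $I(p,r)$, and that step has a genuine gap. The ``Besicovitch-type bounded-multiplicity argument'' you invoke is precisely what is unavailable here: Besicovitch-type theorems control the overlap of families of balls selected in a prescribed way, not of an arbitrary family $\{f_\tau(\overline{\Omega})\}_{\tau\in I(p,r)}$, and with no separation hypothesis these images may coincide or nest to arbitrary depth (distinct words $\tau$ can even induce the same map $f_\tau$), so no packing count of the form $\#I(p,r)\le \nu(B^M(p,C'r))/\min_\tau\nu(f_\tau(\overline{\Omega}))$ can be extracted. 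Moreover, even if one granted a bound $\#I(p,r)\le C_2r^{-\gamma}$, the calibration $\gamma<s_0$ fails in general: $\gamma$ would be determined by the geometry (the constants $c_i$, $r_i$ and the dimension $n$), whereas $s_0$ tends to $0$ as one of the weights $p_i$ tends to $1$, and the weights are arbitrary. A telling symptom is that your argument uses the hypothesis that $K$ is not a singleton only to get $\mathrm{diam}(K)>0$ for the stopping rule; in the actual proof that hypothesis is the engine, not a normalization.

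The paper's proof (it refers to Hu--Lau--Ngai and writes out the graph-directed analogue in full in Lemma \ref{lem:7.1}) proceeds by mass-decay iteration rather than by counting. Since $K$ is not a singleton, some iterate of the IFS admits two disjoint cylinders $K_{\tau_1}\cap K_{\tau_2}=\emptyset$, hence there is $r_0>0$ such that every ball of radius $r\le r_0$ centered in $\overline{\Omega}$ misses at least one of them. Pulling $B^M(x,r)$ back through the invariance identity and discarding the branch the ball misses gives, with $\phi(r):=\sup_x\mu(B^M(x,r))$, $p:=\min\{p_{\tau_1},p_{\tau_2}\}$ and $c$ the minimum of the lower Lipschitz constants at the level of that iterate, the functional inequality $\phi(r)\le(1-p)\,\phi(r/c)$ for all $r\le r_0$; iterating yields $\phi(r)\le Cr^{s}$ with $s=\log(1-p)/\log c>0$. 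The only geometric input is the lower bi-Lipschitz bound, used to ensure $f_\tau^{-1}(B^M(x,r))\subseteq B^M(f_\tau^{-1}(x),r/c)$, and no control on overlaps or on cardinalities of antichains is ever needed. To repair your proof you would have to replace the entire second step by this mechanism.
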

	The proof of this lemma is similar to that of \cite[Lemma 5.1]{Hu-Lau-Ngai_2006} and is omitted.

	\begin{prop}\label{prop:6.1}
		Let $\mathbb{S}^2$ be a unit 2-sphere with center $O$. Let $$\mathbb{S}^2_+:=\{(\sin\varphi\cos\theta,\sin\varphi\sin\theta,\cos\varphi):0\leq \theta<2\pi, 0\leq \varphi\leq \pi/2\}$$ be a subset of $\mathbb{S}^2$, parameterized by the polar angle $\varphi\in [0,\pi/2]$ and the azimuthal angle $\theta\in[0,2\pi)$. For any point $p:=(\sin\varphi\cos\theta,\sin\varphi\sin\theta,\cos\varphi)\in \mathbb{S}^2_+$, let $f:\mathbb{S}^2_+\to \mathbb{S}^2_+$ be a mapping defined by
			\begin{align*}
				f(p)=\Big(\sin\frac{\varphi}{2}\cos\theta,\sin\frac{\varphi}{2}\sin\theta,\cos\frac{\varphi}{2}\Big).
		\end{align*}
		Then $f$ is a bi-Lipschitz contraction on $\mathbb{S}^2_+$, i.e., there exist $c, r$ with $0<c \leq r<1$ such that
		\begin{align}\label{eq:bi}
			cd_M(p,q) \leq d_M(f(p),f(q)) \leq rd_M(p,q) \quad \text { for all } p, q \in \mathbb{S}^2_+.
		\end{align}
		
	\end{prop}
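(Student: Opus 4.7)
The plan is to work in spherical coordinates, read off the pointwise singular values of $df$, and then transfer the infinitesimal estimate to the metric via geodesic convexity. In the chart $(\varphi,\theta) \in [0,\pi/2]\times[0,2\pi)$ the induced metric on $\mathbb{S}^2_+$ is $ds^2 = d\varphi^2 + \sin^2\varphi\, d\theta^2$, and the map takes the simple form $(\varphi,\theta) \mapsto (\varphi/2,\theta)$. Writing $df_p$ in the orthonormal frames $(\partial_\varphi,\,\partial_\theta/\sin\varphi)$ at $p$ and $(\partial_\varphi,\,\partial_\theta/\sin(\varphi/2))$ at $f(p)$, and using $\sin\varphi = 2\sin(\varphi/2)\cos(\varphi/2)$, one obtains the diagonal matrix
\[
\begin{pmatrix} 1/2 & 0 \\ 0 & 1/(2\cos(\varphi/2)) \end{pmatrix}.
\]
Since $\varphi/2 \in [0,\pi/4]$ forces $\cos(\varphi/2) \in [\sqrt{2}/2,\,1]$, both singular values lie in $[1/2,\,1/\sqrt{2}]$ uniformly in $p$. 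The apparent degeneracy at the north pole is spurious: writing $f$ ambiently as $(x,y,z) \mapsto (x/\sqrt{2(1+z)},\,y/\sqrt{2(1+z)},\,\sqrt{(1+z)/2})$ and expanding near $(0,0,1)$ shows $df$ is smooth there with both singular values equal to $1/2$. Thus $\tfrac{1}{2}|v| \le |df_p(v)| \le \tfrac{1}{\sqrt{2}}|v|$ for every tangent vector $v$.

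To convert these pointwise bounds into bi-Lipschitz estimates on the Riemannian distance, I use that both $\mathbb{S}^2_+$ and its image $f(\mathbb{S}^2_+) = \{\varphi \le \pi/4\}$ are geodesically convex in $\mathbb{S}^2$: any two points in a closed hemisphere (respectively, a cap of radius $\le \pi/2$) are joined by a minimizing great-circle arc that remains inside. For the upper bound, letting $\gamma$ be a minimizing geodesic in $\mathbb{S}^2_+$ from $p$ to $q$,
\[
d_M(f(p),f(q)) \le \operatorname{Length}(f\circ\gamma) \le \tfrac{1}{\sqrt{2}}\operatorname{Length}(\gamma) = \tfrac{1}{\sqrt{2}}\, d_M(p,q).
\]
For the lower bound, $f$ is a smooth homeomorphism from $\mathbb{S}^2_+$ onto the polar cap (a diffeomorphism in the interior, smooth at the pole by the ambient formula above). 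Letting $\eta$ be a minimizing geodesic from $f(p)$ to $f(q)$, which stays in $f(\mathbb{S}^2_+)$ by convexity, the curve $f^{-1}\circ\eta$ joins $p$ to $q$ inside $\mathbb{S}^2_+$, so
\[
d_M(f(p),f(q)) = \operatorname{Length}(\eta) \ge \tfrac{1}{2}\operatorname{Length}(f^{-1}\circ\eta) \ge \tfrac{1}{2}\, d_M(p,q).
\]
Taking $c = 1/2$ and $r = 1/\sqrt{2}$ yields the conclusion with $0 < c \le r < 1$.

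The main obstacle is a careful handling of geodesic convexity of $\mathbb{S}^2_+$ at its boundary circle $\varphi = \pi/2$: two equatorial points admit a minimizing arc lying on the equator itself (hence inside $\mathbb{S}^2_+$), and antipodal equatorial points admit two minimizing geodesics, one of which lies in $\mathbb{S}^2_+$, which suffices. The analogous statement for the cap $\{\varphi \le \pi/4\}$ is easier since its diameter is at most $\pi/2$. Once these geometric facts are in place, the remainder is a routine application of the chain rule along arc-length parameterized curves.
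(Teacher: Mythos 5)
Your proof is correct, and it takes a genuinely different and considerably more efficient route than the paper's. The paper works entirely with the explicit spherical distance formula $d_M(p,q)=\arccos(\sin a\sin b+\cos a\cos b\cos\alpha)$, forms the ratio $d_M(f(p),f(q))/d_M(p,q)$ as a function of three angular variables, and then runs a long case-by-case analysis (three cases, one split into four subcases) using monotonicity in $\alpha$, L'H\^opital's rule at the degenerate configurations, the mean value theorem applied to $\arccos$, and continuity/compactness to extract uniform bounds; the constants $c$ and $r$ are obtained only implicitly as minima and maxima over the various subcases. You instead linearize: in the frame $(\partial_\varphi,\,\partial_\theta/\sin\varphi)$ the differential of $(\varphi,\theta)\mapsto(\varphi/2,\theta)$ is $\operatorname{diag}\big(1/2,\,1/(2\cos(\varphi/2))\big)$, whose entries lie in $[1/2,1/\sqrt{2}]$ uniformly, and you then integrate along minimizing geodesics, using geodesic convexity of the closed hemisphere (for the upper bound) and of the cap $\{\varphi\le\pi/4\}$ together with the bound $\|d(f^{-1})\|\le 2$ (for the lower bound). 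The two delicate points in your argument --- smoothness of $f$ across the coordinate singularity at the pole, which you settle with the ambient formula $(x,y,z)\mapsto\big(x/\sqrt{2(1+z)},\,y/\sqrt{2(1+z)},\,\sqrt{(1+z)/2}\big)$, and convexity of the hemisphere at its boundary equator (where one must select the minimizing arc lying on the upper semicircle of the relevant great circle) --- are both addressed. What your approach buys is brevity, the explicit sharp constants $c=1/2$ and $r=1/\sqrt{2}$, and a template that generalizes immediately to other radial contractions $\varphi\mapsto\lambda\varphi$ on caps; what the paper's approach buys, in principle, is independence from the differential-geometric machinery, though at a substantial cost in length and verifiability.
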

	
	\begin{proof}
		For any two points $p,q\in\mathbb{S}^2_+$ with respect parameters $(\varphi_1,\theta_1)$ and $(\varphi_2,\theta_2)$, where  $\varphi_1\geq\varphi_2$, we let  $a:=\pi/2-\varphi_1$, $b:=\pi/2-\varphi_2$, $\alpha:=|\theta_1-\theta_2|$. Then $b\geq a$, $a,b\in [0,\pi/2]$, and $\alpha\in [0,2\pi)$.
			It follows from distance formula on $\mathbb{S}^2$ that
				\begin{align}\label{eq:dis1}
					d_M(p,q)=\arccos\big(\sin a\sin b+\cos a \cos b \cos\alpha\big)
				\end{align}
				(see, e.g., \cite[p35]{Green_1985}).
				Let $w$ be the point at which the parallel of $p$ intersects the meridian of $q$ and let $(\varphi_1,\theta_2)$ be its parameter.
			By the law of cosines on $\mathbb{S}^2$ (see, e.g. \cite[p6]{Green_1985}) and \eqref{eq:dis1}, we have
		\begin{align*}
			d_M(p,q)=&\arccos\big(\cos(d_M(p,w))\cos(d_M(q,w))\big)
			=\arccos\big((\sin^2a+\cos^2 a \cos\alpha )\cos(b-a)\big).
		\end{align*}
		Hence 
		\begin{align}\label{eq:dis}
			&\frac{d_M(f(p),f(q))}{d_M(p,q)}\nonumber\\
			=&\frac{\arccos\big((\sin^2(a/2+\pi/4)+\cos^2 (a/2+\pi/4) \cos\alpha )\cos((b-a)/2)\big)}{\arccos\big((\sin^2a+\cos^2 a \cos\alpha )\cos(b-a)\big)}.
		\end{align}

	\noindent{\em Step 1.} We first show that \eqref{eq:dis} has a positive lower bound $c$. Since $\arccos x$ is non-negative and decreasing on $[-1,1]$, the right-hand side of \eqref{eq:dis} is non-negative and equals zero  only  when $a=b=\alpha=0$ and $a=b=\pi/2$. Using \eqref{eq:dis} and L'H\^{o}pital's rule, we have 
			\begin{align}
				&\lim\limits_{(\alpha,a,b)\to (0,0,0)}\frac{d_M(f(p),f(q))}{d_M(p,q)}=\frac{1}{2}\label{eq:low1}\qquad\text{and}\\
				&\lim\limits_{(a,b)\to (\pi/2, \pi/2)}\frac{d_M(f(p),f(q))}{d_M(p,q)}=\frac{1}{2}\label{eq:low2}.
		\end{align}
		For any $\epsilon_1\in (0,1/4)$, there exists $\delta_1>0$ such that for $\alpha,a,b\in[0,\delta_1)$,
		\begin{align*}
			\Big|\frac{d_M(f(p),f(q))}{d_M(p,q)}-\frac{1}{2}\Big|<\epsilon_1,
		\end{align*}
		For any $\epsilon_2\in (0,1/4)$, there exists $\delta_2>0$ such that for $\alpha\in[0,2\pi)$ and $a,b\in(\pi/2-\delta_2,\pi/2]$ with $b\geq a$,
		\begin{align*}
			\Big|\frac{d_M(f(p),f(q))}{d_M(p,q)}-\frac{1}{2}\Big|<\epsilon_2,
		\end{align*}
		Let $c_1:=\min\{1/2+\epsilon_1,1/2+\epsilon_2\}$. Then for any $\alpha\in[\delta_1,2\pi)$ and  $a,b\in[\delta_1,\pi/2-\delta_2]$ with $b\geq a$,
		\begin{align*}
			\frac{d_M(f(p),f(q))}{d_M(p,q)}\geq c_1.
		\end{align*}
		For any $\widetilde{\delta_1}\geq\delta_1$, by \eqref{eq:low1} and the continuity of $d_M(f(p),f(q))/d_M(p,q)$, there exists $c_2\in(0,1)$ such that for $a,b,c\in[0,\widetilde{\delta_1}]$,
		\begin{align*}
			\frac{d_M(f(p),f(q))}{d_M(p,q)}\geq c_2.
		\end{align*}
		For any $\widetilde{\delta_2}\geq\delta_2$, by \eqref{eq:low2} and the continuity of $d_M(f(p),f(q))/d_M(p,q)$, there exists $c_3\in(0,1)$ such that for $a,b\in[2\pi-\widetilde{\delta_2},2\pi)$ and $\alpha\in[0,2\pi)$,
		\begin{align*}
			\frac{d_M(f(p),f(q))}{d_M(p,q)}\geq c_3.
		\end{align*}
		Hence for any  $a,b\in[0,\pi/2]$ and $\alpha\in[0,2\pi)$ with $b\geq a$,
		\begin{align}\label{eq:low3}
			\frac{d_M(f(p),f(q))}{d_M(p,q)}\geq c,
		\end{align}
		where $c:=\min\{1/2,c_1,c_2,c_3\}$.
		
		We will often assume that $a,b$ lie in the following intervals:
			\begin{align}
				a\in [0,\pi/4],\quad b\in[0,\pi/2],\quad b\geq a,\quad a\neq 0\,\,\text{or}\,\,b\neq \pi/2.\label{eq:int1}
			\end{align}
			
		\noindent	Step 2. We show that \eqref{eq:dis} has a positive lower bound $r$. Let
		\begin{align}
			g(a,b,\alpha):=&\big(\sin^2(a/2+\pi/4)+\cos^2 (a/2+\pi/4) \cos\alpha \big)\cos((b-a)/2),\nonumber\\
			h(a,b,\alpha):=&(\sin^2a+\cos^2 a \cos\alpha )\cos(b-a),\label{eq:F} \\
			F(a,b,\alpha):=&\frac{g(a,b,\alpha)}{h(a,b,\alpha)}.\nonumber
		\end{align}
	Then \begin{align}\label{eq:fab}
				\partial_\alpha( F(a,b,\alpha))=\frac{\cos((b-a)/2)\sec(b-a)(\sin a+\cos(2a)) \sin\alpha}{2(\cos^2 a \cos\alpha+\sin^2a )^2}.
		\end{align}
		For $\alpha\in [0,\pi]$ and $a,b\in[0,\pi/2]$ with $b\geq a$, if $h(a,b,\alpha)=0$,  then  either $a=0$ and $b=\pi/2$, or $\alpha=\arccos(-\tan^2a)$. Similarly, for $\alpha\in [\pi,2\pi)$ and $a,b\in[0,\pi/2]$ with $b\geq a$, if $h(a,b,\alpha)=0$, then  either $a=0$ and $b=\pi/2$, or $\alpha=2\pi-\arccos(-\tan^2a)$. Hence we consider the following three cases. 
		
		\noindent{\em Case I. $a,b$ as in \eqref{eq:int1}, and $\alpha\in [0,2\pi)$.} We further subdivide this into four subcases, according to the value of $\alpha$.
			
			\noindent {\em Subcase 1. $\alpha\in[0,\arccos(-\tan^2a)]\subseteq [0,\pi/2]$.} In this subcase, $\partial_\alpha( F(a,b,\alpha))\geq 0$.
			Hence $F(a,b,\alpha)$ is an increasing function of $\alpha$ and thus
			\begin{align*}
				1=F(b,b,0)\leq F(a,b,0) \leq F(a,b,\alpha)\leq	F(a,b,\arccos(-\tan^2a)),
			\end{align*}
			where the first inequality follows by a direct verification.
			Therefore, for any $\epsilon_3\in (0,1/4)$, there exists $\delta_3>0$ such that for $\alpha\in (0,\delta_3)$ and $|a-b|<\delta_3$,
			\begin{align*}
				|F(a,b,\alpha)-1|<\epsilon_3.
			\end{align*}
			Thus for $\alpha\in [\delta_3,\arccos(-\tan^2a)]$, and $a,b$ as in \eqref{eq:int1} with $|a-b|\geq\delta_3$,
			\begin{align}\label{eq:mv1}
				\arccos( g(a,b,\alpha)) \leq \arccos( (1+\epsilon_3)	h(a,b,\alpha)).
			\end{align}
			By the mean-value theorem, there exists 
			\begin{align}\label{eq:xi1}
				\xi(a,b,\alpha)\in\big(h(a,b,\alpha) , (1+\epsilon_3)	h(a,b,\alpha)\big)
			\end{align}
			such that 
			\begin{align}\label{eq:mv2}
				\frac{\arccos( (1+\epsilon_3)	h(a,b,\alpha))-\arccos( h(a,b,\alpha))}{(1+\epsilon_3)	h(a,b,\alpha)-	h(a,b,\alpha)}=\arccos'(\xi(a,b,\alpha)).
			\end{align}
			Combining \eqref{eq:mv1} and \eqref{eq:mv2}, we have
			\begin{align}
				\arccos( g(a,b,\alpha)) 
				\leq&\arccos( h(a,b,\alpha))\Big(1-\Big|\frac{\epsilon_3h(a,b,\alpha)\arccos'(\xi(a,b,\alpha))}{\arccos( h(a,b,\alpha))}\Big|\Big)\label{eq:mv3}\\
				=:&(1-|\epsilon_3\eta_1|)\arccos( h(a,b,\alpha)),\nonumber
			\end{align}
			where 
			\begin{align}\label{eq:eta11}
				\eta_1:=\frac{h(a,b,\alpha)\arccos'(\xi(a,b,\alpha))}{\arccos( h(a,b,\alpha))}.
			\end{align}
			
			We claim that for $\widetilde{\epsilon}_3\in (0,1/4)$ sufficiently small, there exists some $\widetilde{\delta}_3>0$ such that for
			$a,b$ as in \eqref{eq:int1} with $|a-b|\geq\delta_3$, we have
			\begin{align}\label{eq:eta12}
				\eta_1\geq \widetilde{\epsilon}_3.
			\end{align}
			In fact, by the definition of $h$ in \eqref{eq:F},
			$\arccos(-\tan^2a)$ is the unique $\alpha$ such that  $h(a,b,\alpha)=0$. Hence for $\alpha\in [\delta_3,  \arccos(-\tan^2a)]$,
			$h(a,b,\alpha)\geq0$, and thus
			$$\frac{h(a,b,\alpha)}{\arccos( h(a,b,\alpha))}\geq\frac{h(a,b, \arccos(-\tan^2a))}{\arccos( h(a,b,\arccos(-\tan^2a)))}=0.$$
			Thus by the continuity of $h$, for $\widetilde{\epsilon}_3\in (0,1/4)$ sufficiently small, there exists $\widetilde{\delta}_3>0$ such that for $\alpha\in [\delta_3,\arccos(-\tan^2a)-\widetilde{\delta}_3]$, and $a,b$ as in \eqref{eq:int1} with $|a-b|\geq\delta_3$,
			\begin{align}\label{eq:bou}
				\frac{h(a,b,\alpha)}{\arccos( h(a,b,\alpha))}\geq\widetilde{\epsilon}_3.
			\end{align}
			As $h(a,b,\alpha)$ is a decreasing function of $\alpha$ on 
			$[\delta_3,\arccos(-\tan^2a)-\widetilde{\delta}_3]$, for all such that $\alpha$ and all $a,b$ as in \eqref{eq:int1} with $|a-b|\geq\delta_3$, we have
			\begin{align}\label{eq:eta13}
				h(a,b,\alpha)\leq h(a,b,\delta_3)<1.
			\end{align}
			It follows from the continuity of $h$ and \eqref{eq:eta13} that
			$$\delta_3':=\min\big\{1-h(a,b,\delta_3):\,a,b\,\,\text{as\,\,in}\,\, \eqref{eq:int1},\,\,|a-b|\geq\delta_3\big\}<0.$$
			Now we properly adjust $\epsilon_3$ so that for all $\epsilon_3>0$ sufficiently small, we have
			\begin{align}\label{eq:eta14} 
				1-(1+\epsilon_3)h(a,b,\alpha))\leq \delta_3'/2.
			\end{align}
			It follows from \eqref{eq:xi1} and \eqref{eq:eta14} that $\xi(a,b,\alpha)<1$, and thus  $|\arccos'(\xi(a,b,\alpha))|> 1$. Combining \eqref{eq:eta11} and \eqref{eq:bou} yields 
			$\eta_1\geq\widetilde{\epsilon}_3$. Proving the claim.
			
			It follows by combining \eqref{eq:dis} and \eqref{eq:mv3} that
			for $\alpha\in [\delta_3,\arccos(-\tan^2a)-\widetilde{\delta}_3]$, and $a,b$ as in \eqref{eq:int1} with $|a-b|\geq\delta_3$,
			\begin{align}\label{eq:upp1}
				\frac{d_M(f(p),f(q))}{d_M(p,q)}=\frac{\arccos(g(a,b,\delta_3))}{\arccos(h(a,b,\delta_3))} \leq1-\epsilon_3\widetilde{\epsilon}_3 .
			\end{align}
			By direct calculation,
			\begin{align}\label{eq:upp2}
				\lim\limits_{(\alpha,a)\to(0,b)}\frac{d_M(f(p),f(q))}{d_M(p,q)}=\frac{1}{2}.
			\end{align}
			For any $\bar{\delta}_3\geq\delta_3$, by \eqref{eq:upp2} and the continuity of $d_M(f(p),f(q))/d_M(p,q)$, there exists $r_1\in(0,1)$ such that for $\alpha\in [0,\bar{\delta}_3]$, $a\in[0,\pi/4]$, and $b\in[a,a+\bar{\delta}_3]$,
			\begin{align}\label{eq:upp3}
				\frac{d_M(f(p),f(q))}{d_M(p,q)}\leq r_1.
			\end{align}
			By direct evaluation,
			\begin{align}\label{eq:upp4}
				\lim\limits_{\substack{\alpha\to \arccos(-\tan^2a) }}&\frac{d_M(f(p),f(q))}{d_M(p,q)}\nonumber\\
				=&\frac{2\arccos[\cos((b-a)/2)\sec^2a(\sin a+\cos(2a))/2]}{\pi}\nonumber\\
				\leq&\frac{2\arccos(\sqrt{2}/4)}{\pi}<1.
			\end{align}
			For any $\bar{\delta}_4\geq\widetilde{\delta}_3$, by \eqref{eq:upp4} and the continuity of $d_M(f(p),f(q))/d_M(p,q)$, there exists $r_2\in(0,1)$ such that for $\alpha\in [\arccos(-\tan^2a)-\bar{\delta}_4,\arccos(-\tan^2a)+\bar{\delta}_4]$, and $a,b$ as in \eqref{eq:int1},
			\begin{align}\label{eq:upp5}
				\frac{d_M(f(p),f(q))}{d_M(p,q)}\leq r_2.
			\end{align}
			Combining \eqref{eq:upp1}, \eqref{eq:upp3}, and \eqref{eq:upp5}, for $\alpha\in [0,\arccos(-\tan^2a)]$, $a,b$ as in \eqref{eq:int1},
			\begin{align}\label{eq:case1}
				\frac{d_M(f(p),f(q))}{d_M(p,q)}\leq r_3,
			\end{align}
			where $r_3:=\max\{1-\epsilon_3\widetilde{\epsilon}_3,r_1,r_2\}<1$.

			\noindent{\em Subcase 2. $\alpha\in (\arccos(-\tan^2a),\pi]$.} Similarly, we can show that for $\epsilon_3, \widetilde{\epsilon}_3\in (0,1/4)$, there exist  $\delta_3>0$  and $\widetilde{\delta}_3>0$ such that for $\alpha\in[\arccos(-\tan^2a)+\widetilde{\delta}_3,\pi-\delta_4])$, $a\in [\delta_4,\pi/4]$, $b\in [\delta_4,\pi/2]$, and $b\geq a$,
			\begin{align}\label{eq:upp6}
				\frac{d_M(f(p),f(q))}{d_M(p,q)} \leq1-\epsilon_4\widetilde{\epsilon}_3.
			\end{align}
			By direct evaluation,
			\begin{align}\label{eq:upp7}
				\lim\limits_{(\alpha,a,b)\to(\pi,0,0)}\frac{d_M(f(p),f(q))}{d_M(p,q)}=\frac{1}{2}.
			\end{align}
			For any $\delta_4'\geq\delta_4$, by \eqref{eq:upp7} and the continuity of $d_M(f(p),f(q))/d_M(p,q)$, there exists $r_4\in(0,1)$ such that for $\alpha\in [\pi-\bar{\delta}_4,\pi+\delta_4']$, $a\in[0, \delta_4']$, $b\in[0,\delta_4']$, and $b\geq a$,
			\begin{align}\label{eq:upp7*}
				\frac{d_M(f(p),f(q))}{d_M(p,q)}\leq r_4.
			\end{align}
			Combining \eqref{eq:upp5}, \eqref{eq:upp6}, and \eqref{eq:upp7*}, for $\alpha\in [\arccos(-\tan^2a),\pi]$, and $a,b$ as in \eqref{eq:int1},
			\begin{align}\label{eq:case2}
				\frac{d_M(f(p),f(q))}{d_M(p,q)}\leq r_5,
			\end{align}
			where $r_5:=\max\{1-\epsilon_4\widetilde{\epsilon}_3,r_2,r_5\}<1$.

			\noindent{\em Subcase 3. $\alpha\in (\pi,2\pi-\arccos(-\tan^2a))$.} Similarly, we can show that  for $\epsilon_4,\widetilde{\epsilon}_4\in (0,1/4)$, there exist  $\delta_4>0$  and $\widetilde{\delta}_4>0$ such that for $\alpha\in[\pi+\delta_4,2\pi-\arccos(-\tan^2a)-\widetilde{\delta}_4])$, $a\in [\delta_4,\pi/4]$, $b\in [\delta_4,\pi/2]$,  and $b\geq a$,
			\begin{align}\label{eq:eq3}
				\frac{d_M(f(p),f(q))}{d_M(p,q)} \leq 1-\epsilon_4\widetilde{\epsilon}_4.
			\end{align}
			By direct evaluation,
			\begin{align}\label{eq:upp8}
				\lim\limits_{\substack{\alpha\to 2\pi-\arccos(-\tan^2a) }}&\frac{d_M(f(p),f(q))}{d_M(p,q)}\nonumber\\
				=&\frac{2\arccos[\cos((b-a)/2)\sec^2(a)(\sin a+\cos(2a))/2]}{\pi}<1.
			\end{align}
			For any $\bar{\delta}_5\geq\widetilde{\delta}_4$, by \eqref{eq:upp8} and the continuity of $d_M(f(p),f(q))/d_M(p,q)$, there exists $r_6\in(0,1)$ such that for $\alpha\in [2\pi-\arccos(-\tan^2a)-\bar{\delta}_5,2\pi-\arccos(-\tan^2a)+\bar{\delta}_5]$, $a,b$ as in \eqref{eq:int1},
			\begin{align}\label{eq:upp9}
				\frac{d_M(f(p),f(q))}{d_M(p,q)}\leq r_6.
			\end{align}
			Combining \eqref{eq:upp7*}, \eqref{eq:eq3}, and \eqref{eq:upp9}, for $\alpha\in (\pi,2\pi-\arccos(-\tan^2a)]$ and $a,b$ as in \eqref{eq:int1},
			\begin{align}\label{eq:case3}
				\frac{d_M(f(p),f(q))}{d_M(p,q)}\leq r_7,
			\end{align}
			where $r_7:=\max\{1-\epsilon_4\widetilde{\epsilon}_7,r_4,r_6\}<1$.

			\noindent{\em Subcase 4. $\alpha\in [2\pi-\arccos(-\tan^2a),2\pi)$.} Similarly, we can show that  for $\epsilon_5>0$ sufficiently small and $\widetilde{\epsilon}_4\in (0,1/4)$, there exist  $\delta_5>0$  and $\widetilde{\delta}_4>0$ such that for $\alpha\in [2\pi-\arccos(-\tan^2a)+\widetilde{\delta}_4,2\pi-\delta_5]$ and $a,b$ as in \eqref{eq:int1} with $|a-b|\geq\delta_5$,
			\begin{align}\label{eq:eq4}
				\frac{d_M(f(p),f(q))}{d_M(p,q)} \leq 1-\epsilon_5\widetilde{\epsilon}_4.
			\end{align}
			By direct evaluation,
			\begin{align}\label{eq:upp10}
				\lim\limits_{(\alpha,a)\to (2\pi, b)}\frac{d_M(f(p),f(q))}{d_M(p,q)}=\frac{1}{2}.
			\end{align}
			For any $\bar{\delta}_5\geq\delta_5$, by \eqref{eq:upp10} and the continuity of $d_M(f(p),f(q))/d_M(p,q)$, there exists $r_8\in(0,1)$ such that for $\alpha\in [\bar{\delta}_5,2\pi)$, $a\in[0,\pi/4]$, and $b\in[a,a+\bar{\delta}_5]$,
			\begin{align}\label{eq:upp11}
				\frac{d_M(f(p),f(q))}{d_M(p,q)}\leq r_8.
			\end{align}
			Combining \eqref{eq:upp9}, \eqref{eq:eq4}, and \eqref{eq:upp11}, for $\alpha\in (2\pi-\arccos(-\tan^2a),2\pi)$, $a,b$ as in \eqref{eq:int1},
			\begin{align}\label{eq:case4}
				\frac{d_M(f(p),f(q))}{d_M(p,q)}\leq r_9,
			\end{align}
			where $r_9:=\max\{1-\epsilon_5\widetilde{\epsilon}_4,r_6,r_8\}<1$.

			\noindent{\em Case II. $a\in (\pi/4, \pi/2]$, $b\in (\pi/4,\pi/2]$, $b\geq a$ and $\alpha\in [0,2\pi)$.} 
		Similarly, we can show that for $\epsilon_6>0$ sufficiently small, $\widetilde{\epsilon}_6\in (0,1/4)$, there exist  $\delta_6>0$  and $\widetilde{\delta}_6>0$ such that for $\alpha\in [\delta_6,\pi-\widetilde{\delta}_6]$,  $a,b\in [\pi/4+\widetilde{\delta}_6,\pi/2]$, and $b-a\geq\delta_6$,
			\begin{align}\label{eq:upp12}
				\frac{d_M(f(p),f(q))}{d_M(p,q)} \leq(1-\epsilon_6\widetilde{\epsilon}_6).
			\end{align}
			By direct evaluation,
			\begin{align}\label{eq:upp13}
				\lim\limits_{(\alpha,a)\to (0,b)}	\frac{d_M(f(p),f(q))}{d_M(p,q)}=\frac{1}{2}.
			\end{align}
			For any $\bar{\delta}_6\geq\delta_6$, by \eqref{eq:upp13} and the continuity of $d_M(f(p),f(q))/d_M(p,q)$, there exists $r_{10}\in(0,1)$ such that for $\alpha\in [0,\bar{\delta}_6]$, $a\in(\pi/4,\pi/2]$, and $b\in[a,a+\bar{\delta}_6]$,
			\begin{align}\label{eq:upp14}
				\frac{d_M(f(p),f(q))}{d_M(p,q)}\leq r_{10}.
			\end{align}
			By direct evaluation,
			\begin{align}\label{eq:upp14*}
				\lim\limits_{(\alpha,a,b)\to (\pi,\pi/4,\pi/4)}	\frac{d_M(f(p),f(q))}{d_M(p,q)}=\frac{1}{2}.
			\end{align}
			For any $\bar{\delta}_7\geq\widetilde{\delta}_6$, by \eqref{eq:upp14*} and the continuity of $d_M(f(p),f(q))/d_M(p,q)$, there exists $r_{11}\in(0,1)$ such that for $\alpha\in [\pi-\bar{\delta}_7,\pi+\bar{\delta}_7]$, $a,b\in(\pi/4,\pi/4+\bar{\delta}_7]$, and $b\geq a$,
			\begin{align}\label{eq:upp15}
				\frac{d_M(f(p),f(q))}{d_M(p,q)}\leq r_{11}.
			\end{align}
			Combining \eqref{eq:upp12}, \eqref{eq:upp14}, and \eqref{eq:upp15}, for $\alpha\in [0,\pi]$, $a\in(\pi/2]$, $b\in(\pi/4,\pi/2]$, and $b\geq a$,
			\begin{align}\label{eq:case5}
				\frac{d_M(f(p),f(q))}{d_M(p,q)}\leq r_{12},
			\end{align}
			where $r_{12}:=\max\{1-\epsilon_6\widetilde{\epsilon}_6,r_{10},r_{11}\}<1$.

			Similarly, for $\epsilon_7>0$ sufficiently small, $\widetilde{\epsilon}_6\in (0,1/4)$, there exist  $\delta_7>0$  and $\widetilde{\delta}_6>0$ such that for $\alpha\in [\pi+\widetilde{\delta}_6,2\pi-\delta_7]$,  $a,b\in [\pi/4+\widetilde{\delta}_6,\pi/2]$, and $|a-b|\geq\delta_7$,
			\begin{align}\label{eq:upp16}
				\frac{d_M(f(p),f(q))}{d_M(p,q)} \leq(1-\epsilon_7\widetilde{\epsilon}_6).
			\end{align}
			By direct evaluation,
			\begin{align}\label{eq:upp17}
				\lim\limits_{(\alpha,a)\to(2\pi, b)}	\frac{d_M(f(p),f(q))}{d_M(p,q)}=\frac{1}{2}.
			\end{align}
			For any $\bar{\delta}_7\geq\delta_7$, by \eqref{eq:upp17} and the continuity of $d_M(f(p),f(q))/d_M(p,q)$, there exists $r_{10}\in(0,1)$ such that for $\alpha\in [2\pi-\bar{\delta}_7,2\pi)$, $a\in(\pi/4,\pi/2]$, and $b\in[a,a+\bar{\delta}_7]$,
			\begin{align}\label{eq:upp18}
				\frac{d_M(f(p),f(q))}{d_M(p,q)}\leq r_{13}.
			\end{align}
			Combining \eqref{eq:upp15}, \eqref{eq:upp16}, and \eqref{eq:upp18}, for $\alpha\in (\pi,2\pi)$, $a\in(\pi/4,\pi/2]$, $b\in(\pi/4,\pi/2]$, and $b\geq a$,
			\begin{align}\label{eq:case6}
				\frac{d_M(f(p),f(q))}{d_M(p,q)}\leq r_{14},
			\end{align}
			where $r_{14}:=\max\{1-\epsilon_7\widetilde{\epsilon}_6,r_{11},r_{13}\}<1$.

			\noindent{\em Case III. $a=0$ and $b=\pi/2$}. In this case, by direct evaluation, we have
			\begin{align}\label{eq:case6}
				\frac{d_M(f(p),f(q))}{d_M(p,q)}=\frac{1}{2}.
			\end{align}
			
			Combining \eqref{eq:case1}, \eqref{eq:case2}, \eqref{eq:case3}, \eqref{eq:case4}, \eqref{eq:case5}, and \eqref{eq:case6}, for $\alpha\in [0,2\pi)$, $a\in [0,\pi/2]$, $b\in [0,\pi/2]$, and $b\geq a$, we have
			\begin{align}\label{eq:up7}
				\frac{d_M(f(p),f(q))}{d_M(p,q)}\leq r,
			\end{align}
			where $r:=\max\{1/2,r_{3},r_{5},r_{7},r_{9},r_{12},r_{14}\}$. 
			
			Combining \eqref{eq:low3} and \eqref{eq:up7} proves that  $f$ is a bi-Lipschitz contraction on $\mathbb{S}^2_+$.
	\end{proof}

	As a consequence of Lemma \ref{lem:4.1}, the closure of a bounded open set of a complete smooth Riemannian 2-manifold $M$, the above measure $\mu$ satisfies $\underline{\operatorname{dim}}_{\infty}(\mu)>0$. Next,  we construct an IFS on $\mathbb{S}^2_+$ that satisfies the conditions of Lemma \ref{lem:4.1}.
	
	Rotations about the $x$-axis and the $y$-axis through an angle $\alpha$ are given respectively by the matrices
	\[R_x(\alpha):=
	\begin{pmatrix}
		1 & 0 & 0 \\
		0 & \cos\alpha &-\sin\alpha \\
		0 & \sin\alpha &\cos \alpha
	\end{pmatrix}
	\qquad \text{and}\qquad R_y(\alpha):=
	\begin{pmatrix}
		\cos\alpha& 0 & \sin\alpha \\
		0 & 1 &0 \\
		-\sin\alpha & 0 &\cos \alpha
	\end{pmatrix}.
	\]

	\begin{figure}[H]
		\centering
		\mbox{\subfigure[]
			{\includegraphics[scale=0.28]{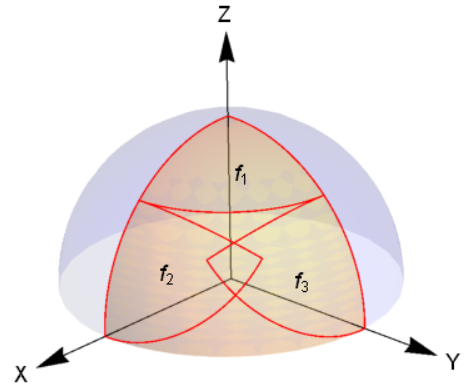}}
		}\qquad
		\mbox{\subfigure[]
			{\includegraphics[scale=0.27]{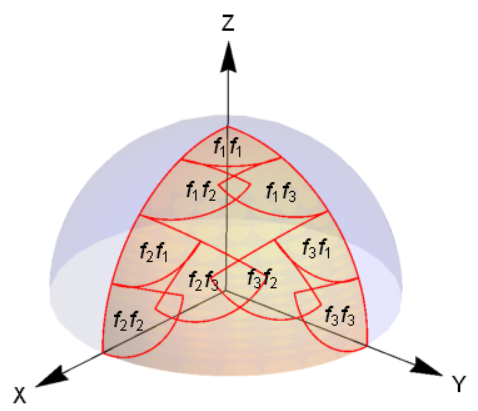}}
		}\,\,
		\mbox{\subfigure[]
			{	\includegraphics[scale=0.36]{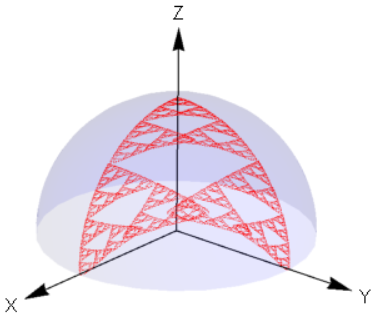}}
		}
		\vspace{0.5cm}
		\caption{Iterations and attractor of the IFS $\{f_i\}_{i=1}^3$ on $\mathbb{S}^2_+$ described in  Example \ref{exam1}. (a) First iteration.  (b) Second iteration.  (c) Attractor.}
		\label{fig.1}
	\end{figure}
	\begin{exam}\label{exam1}
		Let $f$, $\mathbb{S}^2$, and $\mathbb{S}^2_+$ be defined as in Proposition \ref{prop:6.1}. Let $$D:=\{(\sin\varphi\cos\theta,\sin\varphi\sin\theta,\cos\varphi):0\leq \theta\leq\pi/2,\,\, 0\leq \varphi\leq \pi/2\}$$ be the subset parametrizing $\mathbb{S}_+^2$. Let
	\begin{align*}
				f_1:=f|_{D},\qquad f_2:=R_y(\pi/4)\circ f_1, \qquad f_3:=R_x(-\pi/4)\circ f_1
		\end{align*}
		be a family bi-Lipschitz contractions on $D$ (see Figure \ref{fig.1}(a)). Let $\mu$ be an invariant measure of the IFS $\left\{f_{i}\right\}_{i=1}^{3}$. Then  $\underline{\operatorname{dim}}_{\infty}(\mu)>0$. It follows that the conclusions of Theorems \ref{thm:wmain1}, \ref{thm:hmain1}, and \ref{thm:smain1} hold for $\mu$, and the conclusions of Theorems \ref{thm:wmain2}, \ref{thm:hmain2}, and \ref{thm:smain2} hold for all such $\mu$ and all $F\in {\rm Lip}(\operatorname{dom}\mathcal{E})$.
	\end{exam}

	\begin{proof}
		We need to show that for any $p\in D$ with parameter $(\varphi,\theta)$, $f_i(p)\in D$, $i=1,2,3$. It is obvious that $f_1(p)\in D$. In Euclidean coordinates, let
	\begin{align*}
			h(\theta):=\frac{\sqrt{2}}{2}(\cos\theta,\sin\theta,1).
		\end{align*}
	Then
		\[R_y(\pi/4)\circ h(\theta)=
		\begin{pmatrix}
			\vspace{0.15cm}
			\frac{1}{2}\cos\theta +\frac{1}{2}\\
			\vspace{0.15cm}
			\frac{\sqrt{2}}{2}\sin\theta \\
			-\frac{1}{2}\cos\theta +\frac{1}{2}
		\end{pmatrix}\quad\text{and}\quad
		R_x(-\pi/4)\circ h(\theta)=
		\begin{pmatrix}	
			\vspace{0.15cm}
			\frac{\sqrt{2}}{2}\cos\theta\\
			\vspace{0.15cm}
			\frac{1}{2}\sin\theta +\frac{1}{2}\\
			-\frac{1}{2}\sin\theta +\frac{1}{2}
		\end{pmatrix}.
		\]
		For $\theta\in[0,\pi/2]$, $-\sin\theta/2 +1/2\geq 0$ and $-\cos\theta/2 +1/2\geq 0$. Hence $f_2(p), f_3(p)\in D$.  By Lemma \ref{lem:4.1} and Proposition \ref{prop:6.1}, we have $\underline{\operatorname{dim}}_{\infty}(\mu)>0$. Furthermore, we see that the measures $\mu$ in this example satisfies the assumptions on $\mu$ in Theorems \ref{thm:wmain1}, \ref{thm:wmain1}, \ref{thm:hmain1}, \ref{thm:hmain2}, \ref{thm:smain1}, and \ref{thm:smain2}. The asserted results follow.
	\end{proof}

	\section{Examples of GIFS} \label{S:exam2}
	\setcounter{equation}{0}
	
	Let $G = (V, E)$ be a {\em graph}, where $V = \{1,\ldots, t\}$ is the set of vertices and $E$ is
	the set of all directed edges. It is possible that the initial and terminal vertices are the same. We allow more than one edge between two vertices. A {\em directed
		path} in $G$ is a finite string $\mathbf{e} = e_1\cdots e_k$ of edges in $E$ such that the terminal vertex of
	each $e_i$ is the initial vertex of the edge $e_{i+1}$. For such a path, denote the length of $\mathbf{e}$ by
	$|v|=k$. For any two vertices $i,j\in V$ and any positive integer $k$, let $E^{i,j}$ be the set
	of all directed edges from $i$ to $j$, $E^{i,j}_k$ be the set of all directed paths of length $p$ from
	$i$ to $j$, $E_k$ be the set of all directed paths of length $k$, and $E^{*}$ be the set of all directed
	paths, i.e.,
	\begin{align*}
		E_k:=\bigcup_{i,j=1}^kE^{i,j}_k\qquad \text{and}\qquad E^{*}:=\bigcup_{k=1}^\infty E_k.
	\end{align*}
	Recall that a directed graph $G = (V, E)$ is said to be {\em strongly connected} provided that whenever $i,j$, there exists a directed path from $i$ to $j$.

	Let $M$ be a smooth oriented complete Riemannian $n$-manifold.
	A {\em graph-directed iterated
		function system} (GIFS) on $M$ consists of
	\begin{enumerate}
		\item [(1)] a finite collection of compact subsets of $M$: $W_1,\ldots,W_t$ such 
		that each $W_i$ has a nonempty interior;
		\item [(2)] a directed graph $G$ with vertex set consisting of the integers $1,\ldots, t$ and 
		contractions $S_{e}:W_j\to W_i$ with constant of contraction $\rho_e$, where $e\in E^{i,j}$ and $i,j\in V$.
	\end{enumerate}
	Then there exists a unique collection of nonempty compact sets $\{K_i\}_{i\in V}$ satisfying
	\begin{align*}
		K_i=\bigcup_{j=1}^t\bigcup_{e\in E^{i,j}} S_e(K_j), \qquad i\in V
	\end{align*}
	and a unique collection of Borel probability measures $\{\mu_i\}_{i\in V}$ satisfying
	\begin{align}\label{eq:mea}
		\mu_i=\sum_{j=1}^t\sum_{e\in E^{i,j}}p_e\mu_j\circ S_e^{-1}, \qquad i\in V,
	\end{align}
	where $\{p_e\}_{e\in E}$ are probability weights such that $p_e \in (0, 1)$ and
	\begin{align}\label{eq:pro}
		\sum_{j=1}^t\sum_{e\in E^{i,j}}p_e=1, \qquad i\in V
	\end{align} 
	(see, e.g., \cite{Mauldin-Williams_1988,Edgar_1990,Olsen_1994}). 
	Define 
	\begin{align*}
		K:=\bigcup_{i=1}^tK_i \qquad\text{and} \qquad	\mu:=\bigcup_{i=1}^t\mu_i.
	\end{align*}
	$K$ is called the {\em graph self-similar set} and $\mu$ the {\em  graph self-similar measure}.
	
	Let $S_\mathbf{e}:=S_{e_{1}} \circ \cdots \circ S_{e_{k}}$. For the invariant set $K$, we let $K_{\mathbf{e}}:=S_{\mathbf{e}}(K)$.
	We have the following lemma.
	
	\begin{lem}\label{lem:7.1}
		Let $M$ be a smooth oriented complete Riemannian n-manifold and $W \subseteq M$ be a compact set. Let $\mu$ be an invariant measure of a GIFS $\left\{S_{e}\right\}_{e\in E}$ of bi-Lipschitz contractions on $W$. Suppose the attractor $K$ is not a singleton. Then $\mu$ is upper $s$-regular for some $s>0$, and hence $\underline{\operatorname{dim}}_{\infty}(\mu)>0$.
	\end{lem}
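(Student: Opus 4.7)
The plan is to adapt the Euclidean IFS argument of \cite[Lemma 5.1]{Hu-Lau-Ngai_2006} to the graph-directed setting on a Riemannian manifold. The goal is to produce constants $C>0$ and $s>0$ such that $\mu(B^M(p,\delta))\le C\delta^{s}$ for every $p\in\operatorname{supp}(\mu)$ and every sufficiently small $\delta>0$; this immediately gives $\underline{\dim}_\infty(\mu)\ge s>0$ by definition \eqref{eq:dim}.

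First I would set up the arithmetic. For each edge $e$ write $c_e,r_e$ for the lower and upper bi-Lipschitz constants with $0<c_e\le r_e<1$, and let $r_{\max}:=\max_e r_e$, $c_{\min}:=\min_e c_e$, $p_{\min}:=\min_e p_e$. For a path $\mathbf{e}=e_1\cdots e_k$, set $r_\mathbf{e}:=r_{e_1}\cdots r_{e_k}$ and $p_\mathbf{e}:=p_{e_1}\cdots p_{e_k}$, so that $\operatorname{diam}(K_\mathbf{e})\le r_\mathbf{e}\operatorname{diam}(W)$ and $\mu(K_\mathbf{e})\le p_\mathbf{e}$ (by iterating \eqref{eq:mea}). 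Choose
\[
s:=\min_{e\in E}\frac{\ln p_e}{\ln r_e}>0,
\]
which is well-defined since $p_e,r_e\in(0,1)$; this choice ensures $p_e\le r_e^{s}$ for every edge, hence $p_\mathbf{e}\le r_\mathbf{e}^{s}$ for every path.

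Next I would run a standard stopping-time covering. For a fixed $\delta>0$ small, define $\Lambda_\delta$ to be the set of minimal paths $\mathbf{e}$ (terminating at some vertex $j(\mathbf{e})$) such that $r_\mathbf{e}\operatorname{diam}(W)\le\delta$ while the predecessor $\mathbf{e}'$ satisfies $r_{\mathbf{e}'}\operatorname{diam}(W)>\delta$. Then $K=\bigcup_{\mathbf{e}\in\Lambda_\delta}K_\mathbf{e}$ and $\mu=\sum_{\mathbf{e}\in\Lambda_\delta}p_\mathbf{e}\,\mu_{j(\mathbf{e})}\circ S_\mathbf{e}^{-1}$. For a point $p\in\operatorname{supp}(\mu)$, let $\Lambda_\delta(p):=\{\mathbf{e}\in\Lambda_\delta:K_\mathbf{e}\cap B^M(p,\delta)\neq\emptyset\}$. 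Since each such $K_\mathbf{e}$ has diameter $\le\delta$, it is contained in $B^M(p,2\delta)$; moreover, each $K_\mathbf{e}$ contains a ball in $M$ of radius comparable to $c_\mathbf{e}\ge c_{\min}r_\mathbf{e}/r_{\max}\gtrsim \delta$ (using bi-Lipschitz lower bounds and the fact that $K$ is not a singleton, so some $K_i$ contains a ball of positive radius in the intrinsic metric of $K$). A volume-type bounded-multiplicity argument on the Riemannian manifold, combined with the fact that all these sets sit inside a compact neighborhood on which the Riemannian volume is comparable to Lebesgue volume in local charts, yields a uniform bound $\#\Lambda_\delta(p)\le N$ independent of $p$ and $\delta$.

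Finally I would combine the pieces:
\[
\mu(B^M(p,\delta))\le\sum_{\mathbf{e}\in\Lambda_\delta(p)}p_\mathbf{e}\le\sum_{\mathbf{e}\in\Lambda_\delta(p)}r_\mathbf{e}^{s}\le N\Bigl(\frac{\delta}{\operatorname{diam}(W)}\Bigr)^{s}=C\delta^{s},
\]
so $\mu$ is upper $s$-regular and $\underline{\dim}_\infty(\mu)\ge s>0$. The main obstacle is the bounded-multiplicity estimate on the manifold: one must verify that the cells $\{K_\mathbf{e}\}_{\mathbf{e}\in\Lambda_\delta(p)}$ admit a uniform overlap bound, which requires either a Besicovitch-type covering lemma valid on the compact set $W\subseteq M$ or a direct volume-counting argument using that each $K_\mathbf{e}$ carries a ball of radius $\asymp\delta$ whose Riemannian volume is $\asymp\delta^{n}$ uniformly. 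All other steps are essentially bookkeeping extensions of the IFS proof to the multi-vertex graph setting, using \eqref{eq:mea} and \eqref{eq:pro}.
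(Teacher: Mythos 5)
Your strategy (a Moran-type stopping-time cover $\Lambda_\delta$, the exponent $s=\min_e \ln p_e/\ln r_e$, and a bounded-multiplicity count $\#\Lambda_\delta(p)\le N$) is genuinely different from the paper's, but it has a gap that cannot be repaired under the stated hypotheses. The lemma assumes only that the $S_e$ are bi-Lipschitz contractions; no separation condition (open set condition or otherwise) is imposed, and the whole point of the paper's Example \ref{exam2} is that the underlying maps have overlaps. Your multiplicity bound fails on two counts. First, a cell $K_{\mathbf e}$ of a fractal attractor generally contains no Riemannian ball of $M$ at all (its volume is typically zero), so the volume-counting heuristic has nothing to count; replacing $K_{\mathbf e}$ by $S_{\mathbf e}(W_{j})$ restores interior but, absent a separation condition, these cells may overlap with unbounded multiplicity. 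Second, and decisively, your final estimate is false in general: take the single-vertex GIFS on $[0,1]$ with $S_1(x)=S_2(x)=x/2$, $S_3(x)=x/2+1/2$ and weights $p_1=p_2=p_3=1/3$ (multiple edges between the same vertices are allowed, and all hypotheses of the lemma hold since the attractor is $[0,1]$). Your exponent is $s=\log_2 3$, yet $\mu\big([0,2^{-k}]\big)=(2/3)^{k}=\big(2^{-k}\big)^{\log_2(3/2)}$, which is not $O\big(\delta^{\log_2 3}\big)$. The failure occurs exactly at the step $\#\Lambda_\delta(p)\le N$: here $\#\Lambda_{2^{-k}}(0)\ge 2^{k}$, since all $2^{k}$ paths in $\{e_1,e_2\}^{k}$ produce the same cell $[0,2^{-k}]$ and each carries weight $3^{-k}$.

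The paper's proof avoids any covering or multiplicity argument, which is precisely why it survives overlaps. Since $K$ is not a singleton, two unions of cylinder sets of the same level are disjoint, hence at positive distance $r_0$; every ball $B^M(x,r)$ with $r\le r_0$ therefore misses at least one of them, so in the self-similar identity \eqref{eq:mea} a group of terms of total weight at least $p>0$ drops out, while the surviving terms are controlled by $\mu_j\big(B^M(S_e^{-1}(x),r/c)\big)$ with $c=\min_e c_e$ (the lower bi-Lipschitz bound). This yields the functional inequality $\phi(r)\le t(1-p)\,\phi(r/c)$ for $\phi(r):=\max_i\sup_x\mu_i\big(B^M(x,r)\big)$, and iteration gives upper $s$-regularity directly. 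If you want to keep your route, you would need to add an open set (or finite-type) hypothesis to control the multiplicity, which proves a strictly weaker statement than the lemma as given.
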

	\begin{proof}\, The proof of this theorem is similar to that of \cite[Lemma 5.1]{Hu-Lau-Ngai_2006}; we only describe the modifications.
		Let $c_{e}, r_{e}$, where $e\in E$, be given as in \eqref{eq:bi} and let $\{p_e\}_{e\in E}$ be given as in \eqref{eq:pro}. Since $K$ is not a singleton, there are indices $\mathbf{e}_{1}, \mathbf{e}_{2}$ of the same length such that $$\Big(\bigcup_{\mathbf{e}_1\in E^{i,j_1}}K_{\mathbf{e}_1}\Big) \bigcap \Big(\bigcup_{\mathbf{e}_2\in E^{i,j_{2}}}K_{\mathbf{e}_2}\Big) =\emptyset .$$
		Hence, without loss of generality, we assume that $(\cup_{e\in E^{i,1}}K_{e} ) \cap (\cup_{e\in E^{i,2}}K_{e}) =\emptyset$. There exists $r_{0}>0$ such that for any $x \in W$, the ball $B^M(x,r_0)$ intersects at most one of the two sets $\cup_{e\in E^{i,1}}K_{e}$ and  $\cup_{e\in E^{i,2}}K_{e}$.
		Let 
		$$ p:=\min_{1\leq i\leq t}\min \Big\{\sum_{e\in E^{i,1}}p_e,\sum_{e\in E^{i,2}}p_e \Big\}<1\quad\text{and}\quad c:=\min _{e\in E}\left\{c_e\right\}. $$  
			Let
			$$ \phi(r):=\max _{1\leq i\leq t} \sup _{x \in M} \mu_i\left(B^M(x,r)\right) \quad(r \geq 0).
			$$
		For $x \in W$ and $0<r \leq r_{0}$, either $B^M(x,r) \cap (\cup_{e\in E^{i,1}}K_{e}) =\emptyset$ or $B^M(x,r) \cap (\cup_{e\in E^{i,2}}K_{e}) =\emptyset.$ We only consider the former case; the latter case can be treated similarly. In this case,
		Therefore 
		\begin{align}\label{eq:si}
			S_{e}^{-1}(B^M(x,r)) \subseteq B^M(S_{e}^{-1}(x), r/c).
		\end{align}
		Hence
		\begin{align*}
			\mu(B^M(x,r))&=\sum_{i=1}^t\sum_{j\neq 2}\sum_{e\notin E^{i,2}}p_e\mu_j\Big(S_{e}^{-1}\big(B^M(x,r)\big)\Big)\qquad(\text{by}\,\,\eqref{eq:mea})\\
			&\leq\sum_{i=1}^t\sum_{j\neq 2}\sum_{e\notin E^{i,2}}p_e\mu_j(B^M(S_{e}^{-1}(x), r/c))\qquad(\text{by}\,\,\eqref{eq:si})\\
				&\leq\sum_{i=1}^t\Big(1-\sum_{e\in E^{i,2}}p_e\Big)\phi(r/c)\\
			&\leq t(1-p)\phi(r/c).
		\end{align*}
		It follows that for $r\in(0, r_{0}]$, $\phi(r) \leq t(1-p) \phi(r/c)$. 
		Therefore, for any $n \geqslant 0$ and any $0<r \leq r_{0}$,
		$
		\phi\left(c^{n} r\right) \leq t(1-p) \phi\left(c^{n-1} r\right) \leq \cdots \leq t^n(1-p)^{n} \phi(r) .
		$
		This implies that
		$$
		\mu\left(B^M(x,r_{0} c^{n})\right) \leq C\left(r_{0} c^{n}\right)^{s},
		$$
		where $s=\ln (t-tp) / \ln c$ and $C=\exp \left(-\ln (t-tp) \ln r_{0} / \ln c\right)$. 
		Hence $\mu$ is upper $s$ -regular, and thus $\underline{\operatorname{dim}}_{\infty}(\mu)>0$.
	\end{proof}
	

	Let $\mathbb{T}^2 := \mathbb{S}^1\times \mathbb{S}^1$ be a 2-torus, viewed as
	$[0, 1]\times[0, 1]$ with opposite sides identified, and $\mathbb{T}^2$ be endowed with the Riemannian metric induced from $\R^2$. We consider the following IFS with overlaps on $\R^2$: 
	\begin{align*}
		\tilde{h}_1(\boldsymbol{x})=\frac{1}{2}\boldsymbol{x}+\Big(0,\frac{1}{4}\Big),\qquad \tilde{h}_2(\boldsymbol{x})=\frac{1}{2}\boldsymbol{x}+\Big(\frac{1}{4},\frac{1}{4}\Big),\\
		\tilde{h}_3(\boldsymbol{x})=\frac{1}{2}\boldsymbol{x}+\Big(\frac{1}{2},\frac{1}{4}\Big),\qquad \tilde{h}_4(\boldsymbol{x})=\frac{1}{2}\boldsymbol{x}+\Big(\frac{1}{4},\frac{3}{4}\Big).
	\end{align*}
	Iterations of $\{\tilde{h}_m\}_{m=1}^4$ induce iterations $\{h_m\}_{m=1}^4$ on $\mathbb{T}^2=\R^2\backslash \Z^2$, defined 
	$$h_m(\boldsymbol{x}):=\tilde{h}_m(\boldsymbol{x})(\text{\rm mod}\,\Z^2),\quad \boldsymbol{x}\in \Omega_0:=[0, 1)\times[0, 1)$$ (see \cite{Ngai-Xu_2023}).  Let $W:=\cup_{m=1}^4\overline{h_m(\Omega_0)}$. Iterations $\{h_m\}_{m=1}^4$ on $W$ generates a compact set $K\subseteq \mathbb{T}^2$ defined as
	$$K:=\bigcap_{l=1}^\infty\bigcup_{m\in\{1,\ldots,4\}^l} h_m(W).$$
	We call $K$ the {\em attractor} of $\{h_m\}_{m=1}^4$ on $\mathbb{T}^2$.

	Next we consider the GIFS $G=(V,E)$ with $V=\{1,\ldots,12\}$, $E=\{e_1,\ldots,e_{48}\}$, and invariant family $\{W_i\}_{i=1}^{12}$, where 
	\begin{align*}
		&W_1=\Big[\frac{1}{4},\frac{1}{2}\Big]\times \Big[\frac{3}{4},1\Big],\,\,\,\,&& W_2=\Big[\frac{1}{2},\frac{3}{4}\Big]\times \Big[\frac{3}{4},1\Big],&&&W_3=\Big[0,\frac{1}{4}\Big]\times \Big[\frac{1}{2},\frac{3}{4}\Big],\\
		&W_4=\Big[\frac{1}{4},\frac{1}{2}\Big]\times \Big[\frac{1}{2},\frac{3}{4}\Big], \,\,\,\,&&W_5=\Big[\frac{1}{2},\frac{3}{4}\Big]\times \Big[\frac{1}{2},\frac{3}{4}\Big],&&& W_6=\Big[\frac{3}{4},1\Big]\times \Big[\frac{1}{2},\frac{3}{4}\Big],\\
		&W_7=\Big[0,\frac{1}{4}\Big]\times \Big[\frac{1}{4},\frac{1}{2}\Big],\,\,\,\,&& W_8=\Big[\frac{1}{4},\frac{1}{2}\Big]\times \Big[\frac{1}{4},\frac{1}{2}\Big],&&&W_9=\Big[\frac{1}{2},\frac{3}{4}\Big]\times \Big[\frac{1}{4},\frac{1}{2}\Big],\\
		&W_{10}=\Big[\frac{3}{4},1\Big]\times \Big[\frac{1}{4},\frac{1}{2}\Big],\,\,\,\,&&W_{11}=\Big[\frac{1}{4},\frac{1}{2}\Big]\times \Big[0,\frac{1}{4}\Big],&&&W_{12}=\Big[\frac{1}{2},\frac{3}{4}\Big]\times \Big[0,\frac{1}{4}\Big]
	\end{align*}
(see Figure \ref{fig:2} (b)) and
	\begin{table}[h]
		\centering
		\small
		\renewcommand\arraystretch{1.3}
		\renewcommand\tabcolsep{5.0pt}
		\begin{tabular}{|l|l|l|l|l|l|}	\hline
			$e_1\in E^{1,7}$& $e_2\in E^{1,8}$& $e_3\in E^{1,11}$& $ e_4\in E^{2,9}$& $ e_5\in E^{2,10}$& $e_6\in E^{2,12}$\\                  
			$e_7\in E^{3,1}$& $e_8\in E^{3,3}$& $e_9\in E^{3,4}$& $e_{10}\in E^{4,1}$& $e_{11}\in E^{4,2}$& $e_{12}\in E^{4,3}$\\
			$e_{13}\in E^{4,4}$& $e_{14}\in E^{4,5}$& $
			e_{15}\in E^{4,6}$& $e_{16}\in E^{5,1}$& $e_{17}\in E^{5,2}\quad $& $e_{18}\in E^{5,3}$\\ 
			$e_{19}\in E^{5,4}$& $e_{20}\in E^{5,5}$& $ e_{21}\in E^{5,6}$& $
			e_{22}\in E^{6,2}$& $ e_{23}\in E^{6,5}$& $ e_{24}\in E^{6,6}$\\
			$e_{25}\in E^{7,7}$& $e_{26}\in E^{7,8}$& $e_{27}\in E^{7,11}$& $e_{28}\in E^{8,7}$& $
			e_{29}\in E^{8,8}$& $e_{30}\in E^{8,9}$\\
			$e_{31}\in E^{8,10}$& $e_{32}\in E^{8,11}$& $e_{33}\in E^{8,12}$& $ e_{34}\in E^{9,7}$& $ e_{35}\in E^{9,8}$& $
			e_{36}\in E^{9,9}$\\
			$e_{37}\in E^{9,10}$& $e_{38}\in E^{9,11}$& $e_{39}\in E^{9,12}$& $ e_{40}\in E^{10,9}$& $e_{41}\in E^{10,10}$& $ e_{42}\in E^{10,12}$\\
			$e_{43}\in E^{11,1}$& $ e_{44}\in E^{11,3}$& $e_{45}\in E^{11,4}$& $ e_{46}\in E^{12,2}$& $ e_{47}\in E^{12,5}$& $e_{48}\in E^{12,6}$     \\ \hline                           
		\end{tabular}
		\vspace{5pt}
		\caption{All the edges in Example \ref{exam2}.}
		\label{T4}
	\end{table}\\
Note that $W=\cup_{i=1}^{12}W_i$. The associated similitudes $S_e$, $e \in E$, are defined as $S_{e_i}(\boldsymbol{x}) := \boldsymbol{x}/2 + d_i$, where $\boldsymbol{x}\in W$ and
	\begin{table}[h]
		\centering
		\small
		\renewcommand\arraystretch{1.3}
		\renewcommand\tabcolsep{5.0pt}
		\begin{tabular}{|c|c|c|c|c|c|c|c|c|c|}\hline
			$i$&$d_i$              &$i$& $d_i$                 &$i$&$d_i$                 &$i$&$d_i$   &$i$&$d_i$                   \\ \hline
			
			1  &$\big(\frac{1}{4},-\frac{3}{8}\big)$     &11 & $\big(-\frac{1}{4},-\frac{1}{8}\big)$               &21&$\big(-\frac{1}{8},0\big)$   &31 &$\big(-\frac{3}{8},\frac{1}{8}\big)$    &41&$\big(\frac{1}{8},\frac{1}{8}\big)$  \\\hline
			
			2  &$\big(\frac{1}{8},-\frac{3}{8}\big)$      &12 & $\big(\frac{1}{4},0\big)$     & 22&$\big(\frac{1}{4},-\frac{1}{8}\big)$  &32 &$\big(\frac{1}{8},\frac{1}{4}\big)$      &42 &$\big(\frac{1}{4},\frac{1}{4}\big)$   \\ \hline
			
			3  &$\big(\frac{1}{8},-\frac{1}{4}\big)$      &13& $\big(\frac{1}{8},0\big)$     & 23&$\big(\frac{1}{4},0\big)$  &33 &$\big(-\frac{1}{4},\frac{1}{4}\big)$   &43 &$\big(\frac{1}{8},-\frac{3}{8}\big)$       \\ \hline
			
			4  &$\big(0,-\frac{3}{8}\big)$                &14& $\big(-\frac{1}{4},0\big)$     & 24&$\big(\frac{1}{8},0\big)$  &34 &$\big(\frac{1}{2},\frac{1}{8}\big)$     &44 &$\big(\frac{1}{4},-\frac{1}{2}\big)$      \\ \hline
			
			5  &$\big(-\frac{1}{8},-\frac{3}{8}\big)$     &15& $\big(-\frac{3}{8},0\big)$    & 25&$\big(0,\frac{1}{8}\big)$   &35 &$\big(\frac{3}{8},\frac{1}{8}\big)$  &45 &$\big(\frac{1}{8},-\frac{1}{2}\big)$        \\ \hline
			
			6  &$\big(0,-\frac{1}{4}\big)$                &16& $\big(\frac{3}{8},-\frac{1}{8}\big)$    & 26&$\big(-\frac{1}{8},\frac{1}{8}\big)$   &   36&$\big(0,\frac{1}{8}\big)$  &  46&$\big(0,-\frac{3}{8}\big)$                           \\ \hline
			
			7  &$\big(-\frac{1}{8},-\frac{1}{8}\big)$     &17& $\big(0,-\frac{1}{8}\big)$   & 27&$\big(-\frac{1}{8},\frac{1}{4}\big)$   &  37&$\big(-\frac{1}{8},\frac{1}{8}\big)$ & 47&$\big(0,-\frac{1}{2}\big)$   \\ \hline   
			
			8  &$\big(0,0\big)$                           &18& $\big(\frac{1}{2},0\big)$   & 28&$\big(\frac{1}{4},\frac{1}{8}\big)$   &   38&$\big(\frac{3}{8},\frac{1}{4}\big)$ &    48&$\big(-\frac{1}{8},-\frac{1}{2}\big)$                           \\ \hline
			
			9  &$\big(-\frac{1}{8},0\big)$                &19& $\big(\frac{3}{8},0\big)$   & 29 &$\big(\frac{1}{8},\frac{1}{8}\big)$  &39 &$\big(0,-\frac{1}{4}\big)$  &   &                              \\ \hline
			
			10  &$\big(\frac{1}{8},-\frac{1}{8}\big)$     &20& $\big(0,0\big)$   & 30&$\big(-\frac{1}{4},\frac{1}{8}\big)$   &40&$\big(\frac{1}{4},\frac{1}{8}\big)$  &   &                              \\ \hline                        
		\end{tabular}
		\vspace{10pt}
		\caption{The translations $d_i$ for the GIFS in Example \ref{exam2}.}
		\label{di}
	\end{table}\\
	We notice that \\
	\begin{align*}
		&h_1|_{W_3}=(S_{e_7}+S_{e_8}+S_{e_9})|_{W_3},\qquad\qquad\qquad &&h_1|_{W_4}=(S_{e_{11}}+S_{e_{12}}+S_{e_{13}}+S_{e_{14}}+S_{e_{15}})|_{W_4},\\
		&h_1|_{W_7}=(S_{e_{25}}+S_{e_{26}}+S_{e_{27}})|_{W_7},\quad\qquad\qquad &&h_1|_{W_8}=(S_{e_{28}}+S_{e_{29}}+S_{e_{30}}+S_{e_{31}}+S_{e_{33}})|_{W_8},\\
		&h_2|_{W_4}=(S_{e_{10}}+S_{e_{12}}+S_{e_{13}}+S_{e_{14}}+S_{e_{15}})|_{W_4}, &&h_2|_{W_5}=(S_{e_{17}}+S_{e_{18}}+S_{e_{19}}+S_{e_{20}}+S_{e_{21}})|_{W_5},\\
		&h_2|_{W_8}=(S_{e_{28}}+S_{e_{29}}+S_{e_{30}}+S_{e_{31}}+S_{e_{32}})|_{W_8}, &&h_2|_{W_9}=(S_{e_{34}}+S_{e_{35}}+S_{e_{36}}+S_{e_{37}}+S_{e_{39}})|_{W_9},\\
		&h_3|_{W_5}=(S_{e_{16}}+S_{e_{18}}+S_{e_{19}}+S_{e_{20}}+S_{e_{21}})|_{W_5}, &&h_3|_{W_6}=(S_{e_{22}}+S_{e_{23}}+S_{e_{24}})|_{W_6},\\
		&h_3|_{W_9}=(S_{e_{34}}+S_{e_{35}}+S_{e_{36}}+S_{e_{37}}+S_{e_{38}})|_{W_9},
		&&h_3|_{W_{10}}=(S_{e_{40}}+S_{e_{41}}+S_{e_{42}})|_{W_{10}},\\
		&h_4|_{W_1}=(S_{e_{1}}+S_{e_{2}}+S_{e_{3}})|_{W_1},\quad\qquad\qquad\qquad &&h_4|_{W_2}=(S_{e_{4}}+S_{e_{5}}+S_{e_{6}})|_{W_2},\\
		&h_4|_{W_{11}}=(S_{e_{43}}+S_{e_{44}}+S_{e_{45}})|_{W_{11}},\qquad\qquad &&h_4|_{W_{12}}=(S_{e_{46}}+S_{e_{47}}+S_{e_{48}})|_{W_{12}}.\\
	\end{align*}
	Hence 
	\begin{align*}
		\bigcup_{m=1}^4h_m(W)=\bigcup_{m=1}^4\bigcup_{i=1}^{12}h_m(W_i)=\bigcup_{i,j=1}^{12}\bigcup_{e\in E^{i,j}}S_e(W_j).	
	\end{align*}
It follows by  induction that for $l\geq 1$,
		\begin{align*}
			\bigcup_{m\in\{1,\ldots,4\}^l}h_m(W)=\bigcup_{i,j=1}^{12}\bigcup_{\mathbf{e}\in E^{i,j}_l}S_\mathbf{e}(W_j).	
	\end{align*}
	Thus  $$K=\bigcap_{l=1}^\infty\bigcup_{i,j=1}^{12}\bigcup_{\mathbf{e}\in E^{i,j}_{l}}S_\mathbf{e}(W_j).$$
	Hence $K$ is the graph self-similar set generated by the GIFS $G = (V, E)$
	associated to $\{S_e\}_{e\in E}$.
	
\begin{rema}\label{rem:tu}
	For any $x,y\in W_i$ for some $i\in V$, there exists some $j\in V$, such that $S_e(x), S_e(y)\in W_j$.
\end{rema}
	
	\begin{prop}\label{prop:1}
		Let $G = (V, E)$ be defined as above.  Then $G$ is strongly connected.
	\end{prop}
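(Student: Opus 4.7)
The plan is to prove strong connectivity by exhibiting a single directed Hamiltonian cycle through all twelve vertices. Once such a cycle is in hand, strong connectivity is immediate: for any ordered pair $(i,j) \in V \times V$, one traverses the cycle starting at $i$ and continues along the cycle until reaching $j$, producing the required directed path.

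First I would read off the out-neighborhoods from Table~\ref{T4}. The twelve lists are:
\[
\begin{array}{lll}
1 \to \{7,8,11\}, & 2 \to \{9,10,12\}, & 3 \to \{1,3,4\}, \\
4 \to \{1,2,3,4,5,6\}, & 5 \to \{1,2,3,4,5,6\}, & 6 \to \{2,5,6\}, \\
7 \to \{7,8,11\}, & 8 \to \{7,\ldots,12\}, & 9 \to \{7,\ldots,12\}, \\
10 \to \{9,10,12\}, & 11 \to \{1,3,4\}, & 12 \to \{2,5,6\}.
\end{array}
\]
A natural structural observation is that vertices $4,5,8,9$ act as hubs (each having six out-edges), while the remaining "boundary" vertices each have exactly three; moreover edges alternate between the two halves $\{1,\ldots,6\}$ and $\{7,\ldots,12\}$ in a tightly controlled way. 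This structure makes a Hamiltonian cycle easy to assemble by hand.

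A direct search produces the cycle
\[
1 \xrightarrow{e_{1}} 7 \xrightarrow{e_{26}} 8 \xrightarrow{e_{33}} 12 \xrightarrow{e_{48}} 6 \xrightarrow{e_{23}} 5 \xrightarrow{e_{17}} 2 \xrightarrow{e_{5}} 10 \xrightarrow{e_{40}} 9 \xrightarrow{e_{38}} 11 \xrightarrow{e_{44}} 3 \xrightarrow{e_{9}} 4 \xrightarrow{e_{10}} 1,
\]
whose twelve constituent edges may each be verified to lie in $E$ by direct lookup in Table~\ref{T4} (they correspond respectively to $E^{1,7}, E^{7,8}, E^{8,12}, E^{12,6}, E^{6,5}, E^{5,2}, E^{2,10}, E^{10,9}, E^{9,11}, E^{11,3}, E^{3,4}, E^{4,1}$). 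Since the cycle visits every vertex of $V$, the graph is strongly connected.

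The main "obstacle" is purely combinatorial bookkeeping — there is no conceptual depth, only the need to produce an explicit cycle and verify its edges one by one. An alternative presentation would be to check strong connectivity by showing for each pair $(i,j)$ that a short directed path exists, but the Hamiltonian cycle above is far more economical.
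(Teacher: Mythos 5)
Your proof is correct and takes essentially the same approach as the paper: both establish strong connectivity by exhibiting a closed directed walk through all twelve vertices, and all twelve edges of your cycle do appear in Table \ref{T4}. Your Hamiltonian cycle is in fact slightly more economical than the paper's closed walk, which revisits several vertices.
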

	\begin{proof}
		We write $i\to j$ if there is a directed edge from $i$ to $j$. By the definition of $E$, for all $i,j\in V$, we have the following edges:
		\begin{align*}
			&1\to 7,8,11;\qquad \qquad\,\,2\to 9,10,12;\quad\, 3\to 1,3,4;\quad\,\,\, 4\to 1,2,3,4,5,6;\\
			&5\to 1,2,3,4,5,6;\qquad6\to 2,5,6;\qquad\,\, 7\to 7,8,11;\quad 8\to 7,8,9,10,11,12;\\
			&9\to 7,8,9,10,11,12;\,\, 10\to 9,10,12;\quad 11\to 1,3,4;\quad 12\to 2,5,6.
		\end{align*}
		Therefore, the following path passes through all the vertices:
		\begin{align*}
			1\to 8\to 12\to 2\to 9\to 11\to 3\to 4\to 5\to 6\to 2\to 9\to 7\to 8\to 10\to 12 \to 5\to 1.
		\end{align*}
		Hence $G$ is strongly connected.
	\end{proof}
	\begin{lem}\label{lem:dis1}
		Let $G$, $\mathbb{T}^2$, and $\{W_i\}_{i=1}^{12}$ be defined as above. Fix $i\in V$ and let $x,y\in W_i$. Then
		$$d_{\mathbb{T}^2}(x,y)=d_{\mathbb{E}}(x,y).$$
	\end{lem}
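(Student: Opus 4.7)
The plan is to exploit the simple fact that each $W_i$ is a closed square of side $1/4$, so its Euclidean diameter is only $\sqrt{2}/4$, while moving between different sheets of the universal cover $\mathbb{R}^2 \to \mathbb{T}^2$ costs at least $1$.

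First I would set up the standard description of the torus distance. Viewing $\mathbb{T}^2 = \mathbb{R}^2/\mathbb{Z}^2$ with the flat quotient metric induced from $\mathbb{R}^2$, every geodesic in $\mathbb{T}^2$ lifts to a straight line segment in $\mathbb{R}^2$ between representatives of its endpoints. Fixing the representatives $x,y \in W_i \subseteq [0,1]^2$, this gives the formula
\begin{equation*}
d_{\mathbb{T}^2}(x,y) = \min_{k \in \mathbb{Z}^2} d_{\mathbb{E}}(x, y+k).
\end{equation*}

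Next I would compare the $k=0$ term with all the others. For $k = 0$, the value is exactly $d_{\mathbb{E}}(x,y) \leq \operatorname{diam}(W_i) = \sqrt{2}/4$. For any nonzero $k \in \mathbb{Z}^2$ we have $\|k\|_{\mathbb{E}} \geq 1$, so by the reverse triangle inequality
\begin{equation*}
d_{\mathbb{E}}(x, y+k) \geq \|k\|_{\mathbb{E}} - d_{\mathbb{E}}(x,y) \geq 1 - \frac{\sqrt{2}}{4} > \frac{\sqrt{2}}{4} \geq d_{\mathbb{E}}(x,y).
\end{equation*}
Hence the minimum is achieved at $k = 0$, proving $d_{\mathbb{T}^2}(x,y) = d_{\mathbb{E}}(x,y)$.

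There is really no hard step here; the whole argument reduces to the numerical inequality $1 - \sqrt{2}/4 > \sqrt{2}/4$, which is the geometric statement that the diameter of each $W_i$ is strictly less than half the shortest translate of the lattice $\mathbb{Z}^2$. The only thing one should be careful about is the initial identification of the torus distance with the minimum of Euclidean distances over lattice translates, which is standard for flat tori but deserves an explicit sentence since the paper has not recorded it earlier.
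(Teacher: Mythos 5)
Your argument is correct, and it is a genuinely different (and cleaner) route than the one in the paper. You invoke the standard covering-space description of the flat quotient metric, $d_{\mathbb{T}^2}(x,y)=\min_{k\in\mathbb{Z}^2}d_{\mathbb{E}}(x,y+k)$, and then the whole lemma collapses to the single numerical inequality $1-\sqrt{2}/4>\sqrt{2}/4$, i.e.\ $\operatorname{diam}(W_i)=\sqrt{2}/4<1/2$. The paper instead argues directly on the quotient: it fixes $W_1$, labels its four boundary edges, and runs an eight-case analysis of how a competing path could leave $W_1$ (upward through $W_{11},W_8,W_4$, rightward through $W_2$ across the gluing edge, etc.), bounding each such path below by either an explicit length like $3/4$ or the triangle inequality. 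Your approach buys rigor and brevity: the paper's case list is visual and elementary but carries the burden of verifying that the eight cases exhaust all homotopy classes of competing paths, a point it passes over quickly, whereas your lattice-translate formula handles all nonzero classes uniformly via the reverse triangle inequality. The one thing you rightly flag is that the identity $d_{\mathbb{T}^2}(x,y)=\min_{k}d_{\mathbb{E}}(x,y+k)$ is not recorded in the paper and should be stated with a one-line justification (it is the general formula for the quotient of a length space by a discrete group of isometries acting freely and properly discontinuously); with that sentence in place, your proof is complete.
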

	\begin{proof}
		We only show that for any $x,y \in W_1$,
		$$d_{\mathbb{T}^2}(x,y)=d_{\mathbb{E}}(x,y);$$
	the proofs for  $x,y \in W_i$, $i=2,\ldots,8$ are the same. Let the four boundaries of $W_1$ be $l_1,l_2,l_3,l_4$, respectively (see Figure \ref{fig:2}(b)). Let $x,y \in W_1$, where $x$ is above $y$, or at the same level. Then 
		$d_{\mathbb{E}}(x,y)\leq\sqrt{2}/4.$
		We prove that
		$$d_M(x,y)<d_{\mathbb{E}}(x,y)$$
		by considering the following eight cases:
		
		\noindent{\em Case 1.  $x$ goes up to $l_1$, goes through $W_{11},W_{8},W_{4}$, and then back to the interior of $W_{1}$.} 
		
		\noindent{\em Case 2.  $x$ goes up to $l_1$, then moves along $l_1$ by a distance $\ge 0$, and finally back to the interior of $W_{1}$}.
		
		\noindent{\em Case 3. $x$ turns right and goes to $l_4$, goes through  $W_{2}$ all the way to the gluing edge, and goes from  $l_2$ into the interior of  $W_{1}$.} 
		
		\noindent{\em Case 4.  $x$ goes to the right to $l_4$,  then moves along $l_4$ by a distance $\ge 0$, and finally returns to the interior of $W_{1}$}.
		
		\noindent{\em Case 5. $x$ turns left and goes to $l_2$, then goes to the gluing edge, and finally traverses $W_{2}$ from $l_4$ into the interior of $W_{1}$.}	
		
		\noindent{\em Case 6.  $x$ turns left and goes to the $l_2$,  then moves along $l_2$ by a distance $\ge 0$, and finally returns to the interior of $W_{1}$}.
		
		\noindent{\em Case 7.  $x$ goes down to $l_3$, then goes through $W_{4},W_{8},W_{11}$, and finally goes back to the interior of $W_{1}$.}
		
		\noindent{\em Case 8.  $x$ goes down to $l_3$,  then moves along $l_3$ by a distance $\ge 0$, and finally returns to the interior of the $W_{1}$}.	
		
		For Case 1, it is easy to see that
		$$d_M(x,y)\geq \frac{3}{4}>d_{\mathbb{E}}(x,y).$$ 
		For Case 2, the triangular inequality implies that
		$d_M(x,y)>d_{\mathbb{E}}(x,y).$ 
		Similarly, we can prove the other cases and thus complete the proof.
	\end{proof}
	
	\begin{lem}\label{lem:dis2}
		Let $G=(V,E)$, $\mathbb{T}^2$, $\{W_i\}_{i=1}^{12}$, and $\{S_e\}_{e\in E}$  be defined as above. Fix $i\in V$ and let $x,y\in W_i$. Then
		$$d_{\mathbb{T}^2}(S_e(x),S_e(y))=d_{\mathbb{E}}(S_e(x),S_e(x)).$$
	\end{lem}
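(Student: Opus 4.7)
The plan is to reduce Lemma \ref{lem:dis2} to the previous lemma using the block containment built into the GIFS. The key observation, already recorded in Remark \ref{rem:tu}, is that whenever $x,y\in W_i$ for some $i\in V$, the images $S_e(x), S_e(y)$ both lie in a common block $W_j$ for some $j\in V$. This happens because every $S_e$ associated to an edge $e\in E^{j,i}$ maps $W_i$ into $W_j$: this is exactly the defining property of a GIFS in our setup, and can be read off directly from Table \ref{T4} together with the $h_m$-decompositions listed in the construction preceding the lemma.

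Granting this containment, the proof is essentially immediate. I simply apply Lemma \ref{lem:dis1}, with $i$ there replaced by $j$, to the pair $S_e(x), S_e(y)$ now viewed as two points in the single block $W_j$. That lemma yields
\[
d_{\mathbb{T}^2}(S_e(x),S_e(y))=d_{\mathbb{E}}(S_e(x),S_e(y)),
\]
which is precisely the asserted identity (the second argument on the right-hand side in the statement being a typo for $S_e(y)$).

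Thus the only thing that genuinely needs checking is Remark \ref{rem:tu}, and this is a routine unpacking of the construction: each similitude $S_{e_k}(\boldsymbol{x})=\boldsymbol{x}/2+d_k$ with $e_k\in E^{j,i}$ sends the block $W_i$ of side-length $1/4$ into a sub-block of $W_j$ of side-length $1/8$, as one verifies by evaluating $S_{e_k}$ on the four corners of $W_i$ and observing that the resulting square fits inside $W_j$ (after the appropriate mod-$\mathbb{Z}^2$ identification, so that no wrap-around issue occurs within a single image block). The substantive work behind Lemma \ref{lem:dis2} is therefore already carried out in Lemma \ref{lem:dis1}, namely the case analysis verifying that no Euclidean geodesic between two points of a common $W_i$ is beaten by a path that exits $W_i$ and re-enters through the torus identifications; Lemma \ref{lem:dis2} is then a clean corollary, with no further geometric casework required.
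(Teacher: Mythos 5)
Your proof is correct and follows essentially the same route as the paper: invoke Remark \ref{rem:tu} to place $S_e(x)$ and $S_e(y)$ in a common block $W_j$, then apply Lemma \ref{lem:dis1}. Your additional unpacking of Remark \ref{rem:tu} (and your note that the right-hand side of the statement should read $d_{\mathbb{E}}(S_e(x),S_e(y))$) goes slightly beyond what the paper writes, but the argument is the same.
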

	\begin{proof}
		For the fixed $i\in V$, by Remark \ref{rem:tu}, there exists some $j\in V$ such that $S_e(x), S_e(y)\subseteq W_j$. By Lemma \ref{lem:dis1}, we have
		$$d_{\mathbb{T}^2}(S_e(x),S_e(y))=d_{\mathbb{E}}(S_e(x),S_e(x)).$$
	\end{proof}

	\begin{exam}\label{exam2}
		Let $G=(V,E)$, $\mathbb{T}^2$, $W$, $\{h_m\}_{m=1}^4$, $\{S_e\}_{e\in E}$, and $K$ be defined as above. Let $\mu$ be an invariant measure of the GIFS  $\{S_e\}_{e\in E}$ of bi-Lipschitz contractions on $W$.  Then $\underline{\operatorname{dim}}_{\infty}(\mu)>0$. Consequently, the conclusions of Theorems \ref{thm:wmain1}, \ref{thm:hmain1}, and \ref{thm:smain1} hold for $\mu$, and the conclusions of Theorems \ref{thm:wmain2}, \ref{thm:hmain2}, and \ref{thm:smain2} hold for all such $\mu$ and all $F\in {\rm Lip}(\operatorname{dom}\mathcal{E})$.
	\end{exam}
	\begin{proof}
	This follows by combining Lemmas \ref{lem:7.1} and  \ref{lem:dis2}.
	\end{proof}
		\begin{figure}[H]
		\centering
		\mbox{\subfigure[]
			{	\includegraphics[scale=0.25]{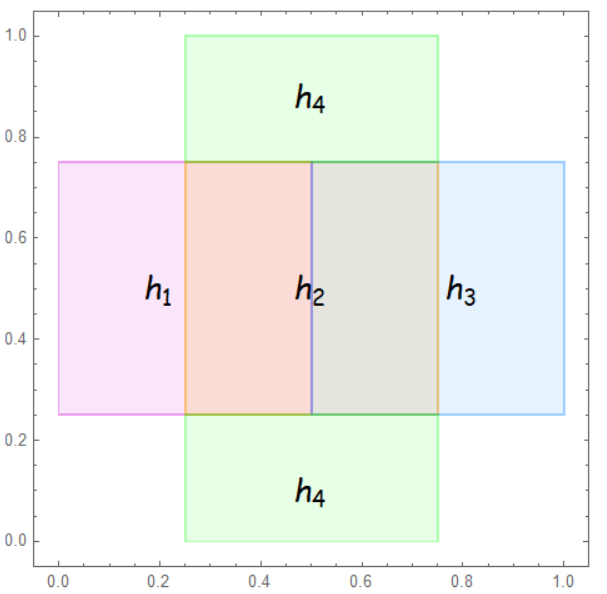}}
		}\qquad
		\mbox{\subfigure[]
			{	\includegraphics[scale=0.36]{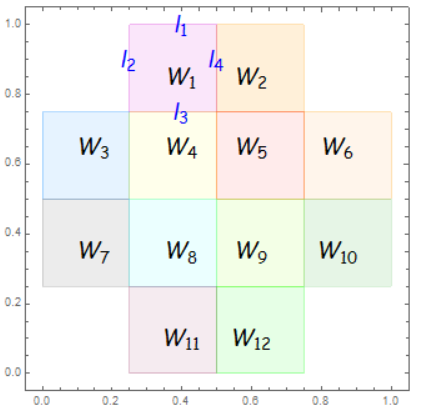}}
		}\\
		\mbox{\subfigure[]
			{	\includegraphics[scale=0.236]{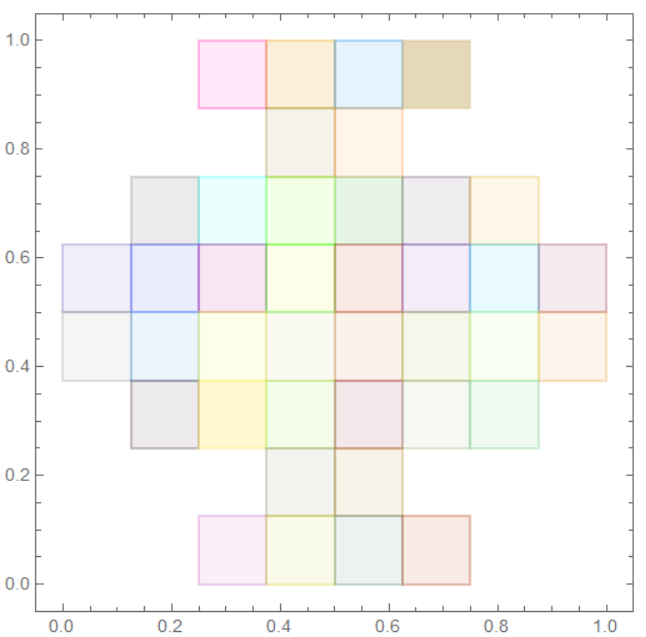}}
		}\qquad\quad
		\mbox{\subfigure[]
			{\includegraphics[scale=0.245]{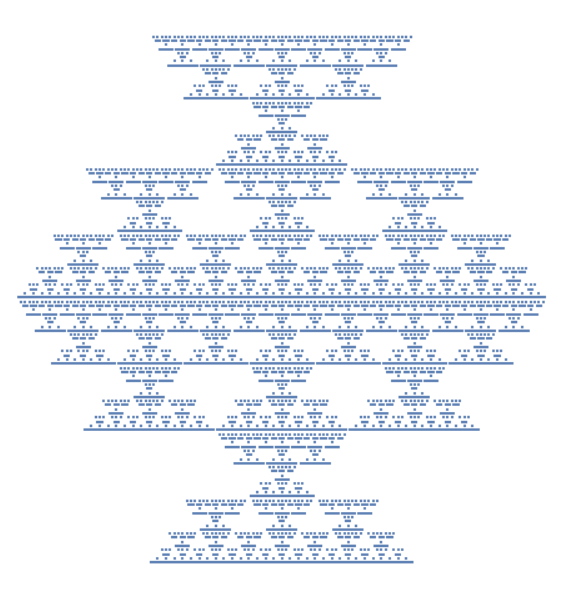}}
		}
		\vspace{0.5cm}
		\caption{ (a) $\{h_m\}_{m=1}^4$ on $\R^2\backslash \Z^2$. (b) First iteration of $G$ on $\R^2$ associated to the relations induced by  $\{h_m\}_{m=1}^4$.  (c) Second iteration. (d) Attractor.}
		\label{fig:2}
	\end{figure}

\section{Example of solutions}\label{S:So}
	\setcounter{equation}{0}
	
	In this section, we study weak solutions of linear wave, heat, and  Schr\"odinger equations by using examples on $\mathbb{S}^1$.
	
	\begin{exam}\label{ex:efd}
		Let $M=\mathbb{S}^1:=\{(\cos(\theta+\pi/2),\sin(\theta+\pi/2)):\theta\in[-\pi,\pi]\}$	and let $\Omega:=\{(\cos(\theta+\pi/2),\sin(\theta+\pi/2)):\theta\in (-\pi/2,\pi/2)\}\subseteq M$ be an open set. Let $\mu$ be the Dirac point mass at the north pole $\theta=0$. Then $\dim(L^2(\Omega,\mu))=1$. Let 
		\begin{align*}
			\varphi=\left\{
			\begin{aligned}
				&\frac{	2\theta}{\pi}+1,\qquad \,\,\,\, &&\theta\in [-\pi/2,0],\\
				&		-\frac{2\theta}{\pi}+1,\qquad &&\theta\in (0,\pi/2].\\
			\end{aligned}	
			\right.
		\end{align*}
		Then $\varphi$ is an eigenfunction of $-\Delta_\mu$ with $\lambda=4/\pi$ being an eigenvalue. Moreover,  $\underline{\operatorname{dim}}_{\infty}(\mu)=0$.
	\end{exam}
	\begin{proof}
	We use the eigenvalues equation (see \cite{Ngai-Ouyang_2023})
			\begin{align*}
				\int_\Omega(-\Delta\varphi)v\,d\nu=\lambda\int_\Omega \varphi v\,d\mu\qquad v\in C_c^\infty(\Omega).
			\end{align*}
		Since 	$$\int_\Omega(-\Delta\varphi)v\,d\nu=\int_\Omega\Big(\frac{4\delta_0}{\pi}\Big)v\,d\nu=\frac{4}{\pi}v\Big(\frac{\pi}{2}\Big)$$
		and $\lambda\int_\Omega \varphi v\,d\mu=\lambda  v(\pi/2)$, we conclude that $4/\pi$ is an eigenvalue and $\varphi$ is an eigenfunction.
	\end{proof}
	
	\begin{exam}\label{ex:slud}
			Let $M$, $\Omega$, $\mu$, $\varphi$, and $\lambda$ be defined as in  Example \ref{ex:efd}. 
			\begin{enumerate}
				\item [(a)] Let $g=\varphi/4$, $h=0$, and $f=0$. Then the weak solution of the linear wave equation \eqref{eq:wlinear} is
				\begin{align*}
					u(t)=\frac{\varphi}{4}\cos\Big(\frac{2t}{\sqrt{\pi}}\Big).
				\end{align*}
				See Figure \ref{fig:DW}. Note that the solution is periodic.
				\item [(b)] Let $g=\varphi/4$ and $f=c$, where $c$ is constant. Then the weak solution of the linear heat equation \eqref{eq:hlinear} is
				\begin{align*}
					u(t)=\frac{\varphi}{4}\big( e^{-4t/\pi}\big(1-c\pi\big)+c\pi\big).	
				\end{align*}
				See Figures \ref{fig:HDF0}--\ref{fig:HDF3}.  Note that if $c=0$, then $u(t)$ tends to zero due to heat dissipation at the boundary points. If $c=1/8$, $u(t)$ decreases as heat dissipation exceeds heat supply, and then it reaches an equilibrium when the rate of heat dissipation equals  that rate of heat supply. If $c=3/4$, $u(t)$ increases since heat supply exceeds heat dissipation, and again, $u(t)$ reaches an equilibrium when the rate of heat dissipation equals that of heat supply.
				\item [(c)] Let $g=\varphi/4$ and $f=0$. Then the weak solution of the linear Schr\"odinger equation \eqref{eq:slinear} is
				\begin{align*}
					u(t)=\frac{\varphi}{4}e^{-i4t/\pi}.
				\end{align*}
				Note that both the real and imaginary parts of $u(t)$ exhibit periodic motion.
				
			\end{enumerate}
			
	\end{exam}
	
	\begin{figure}[H]
		\centering
		\mbox{\subfigure[]
			{	\includegraphics[scale=0.2]{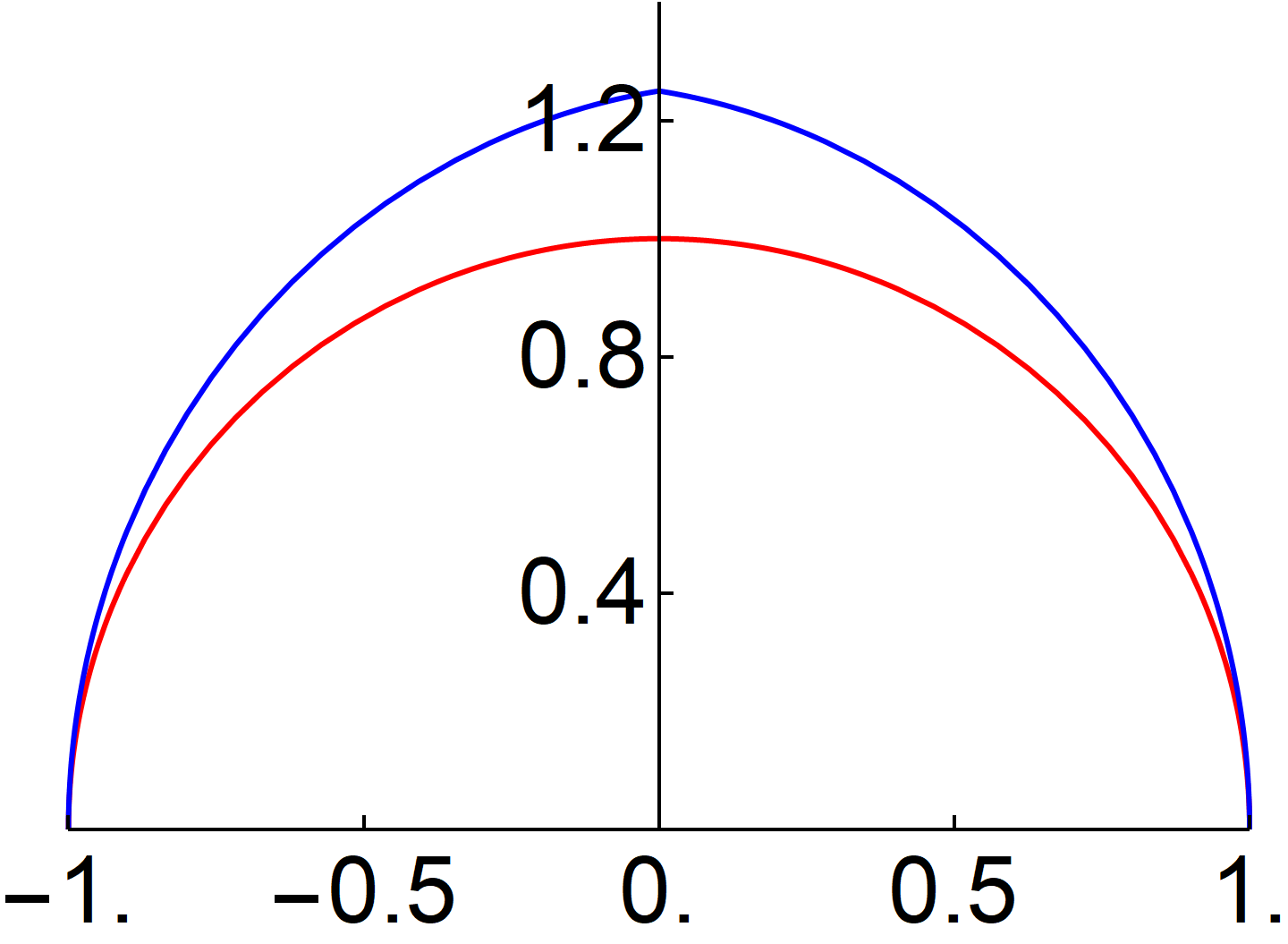}}
		}\quad
		\mbox{\subfigure[]
			{	\includegraphics[scale=0.2]{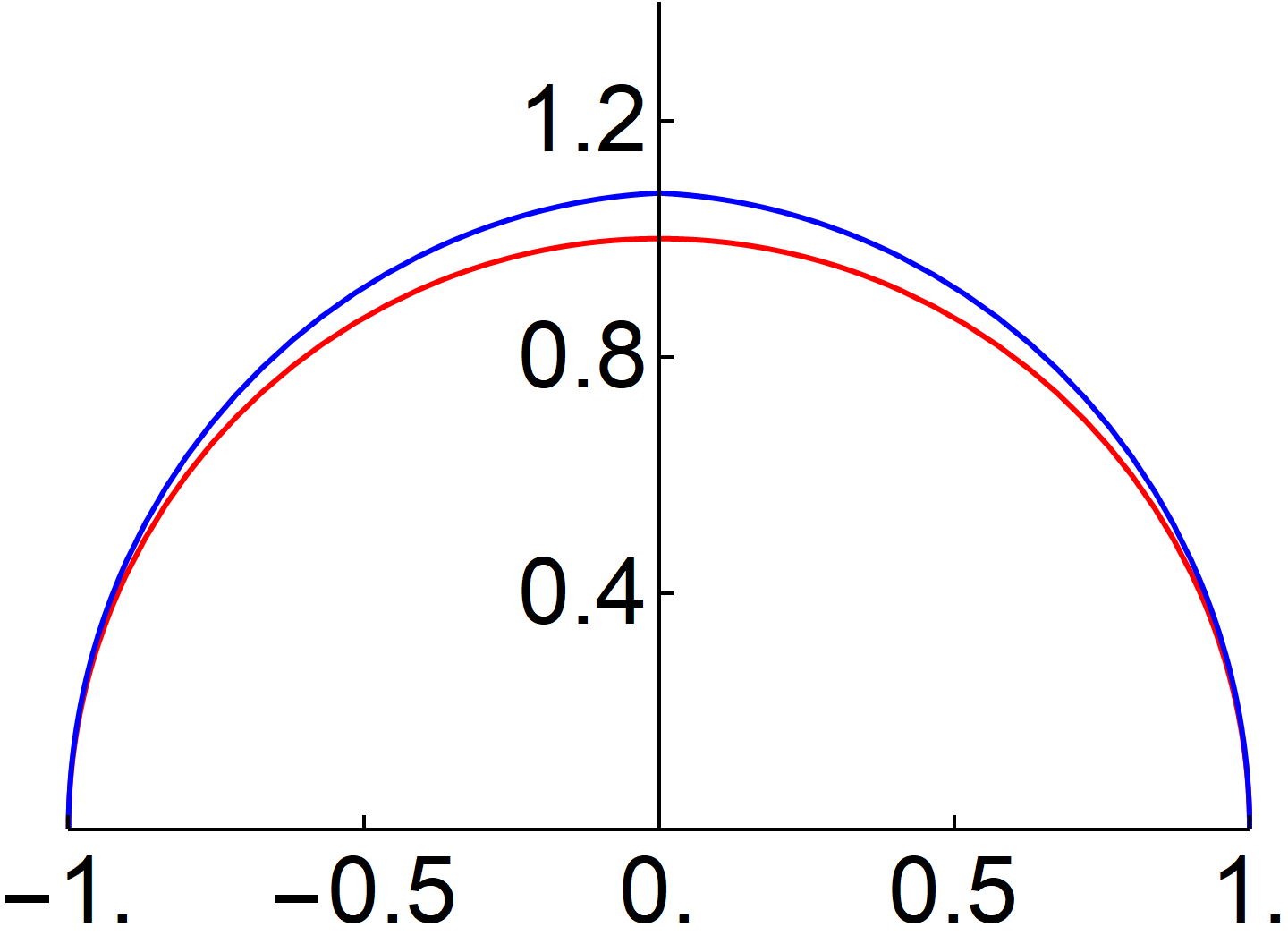}}
		}\quad
		\mbox{\subfigure[]
			{	\includegraphics[scale=0.2]{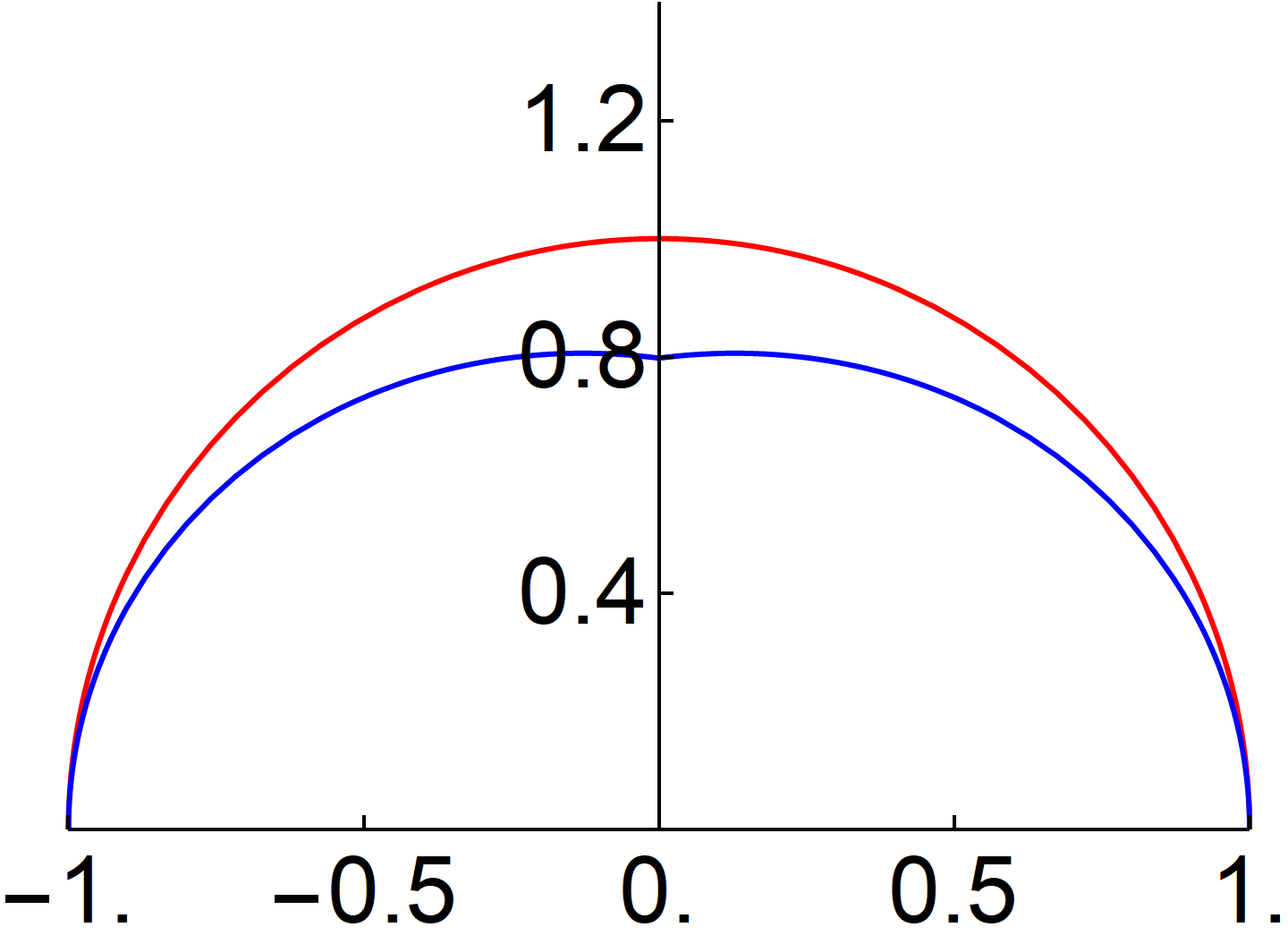}}
		}\quad
		\mbox{\subfigure[]
			{	\includegraphics[scale=0.2]{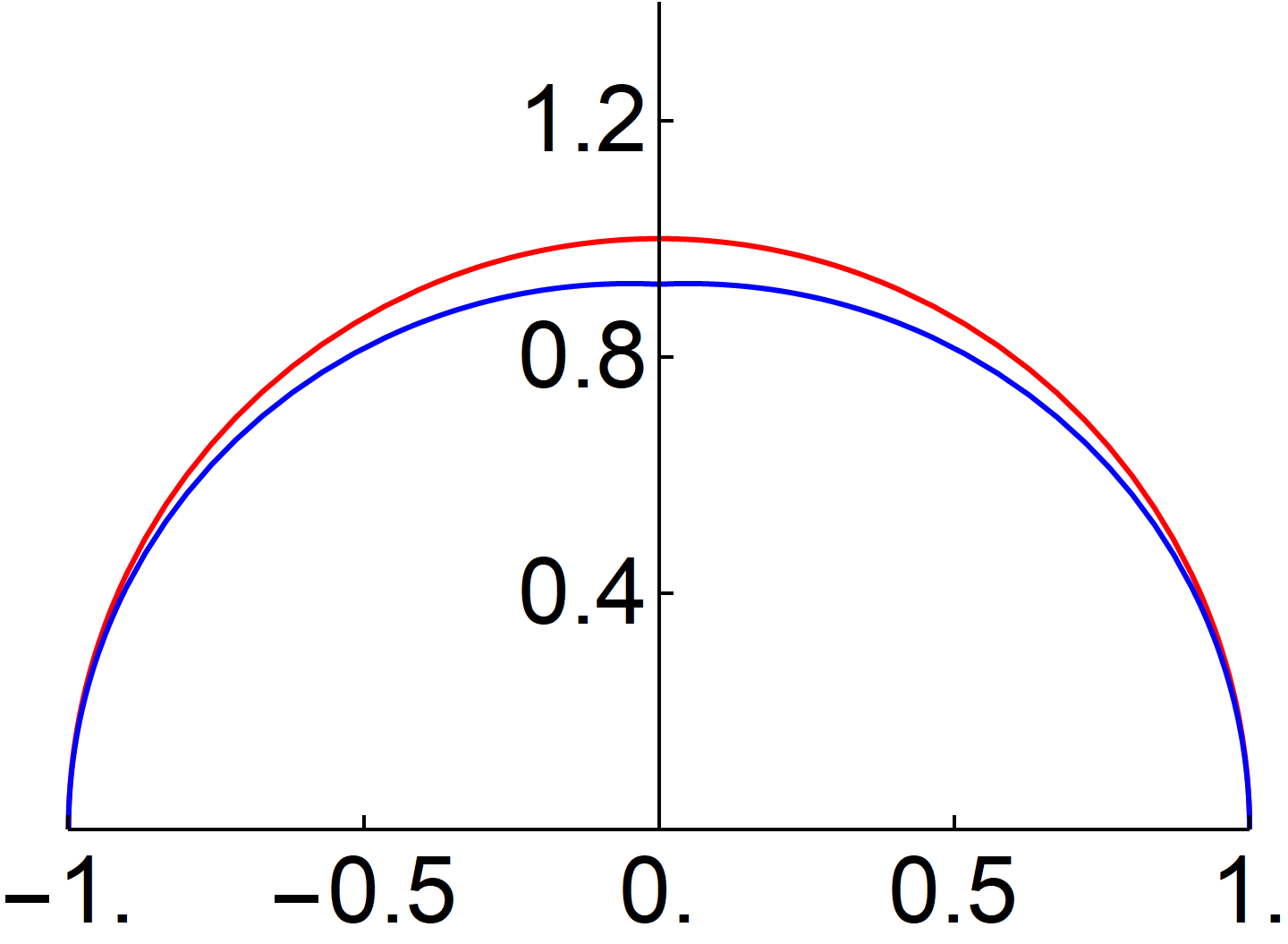}}
		}\quad
		\mbox{\subfigure[]
			{	\includegraphics[scale=0.2]{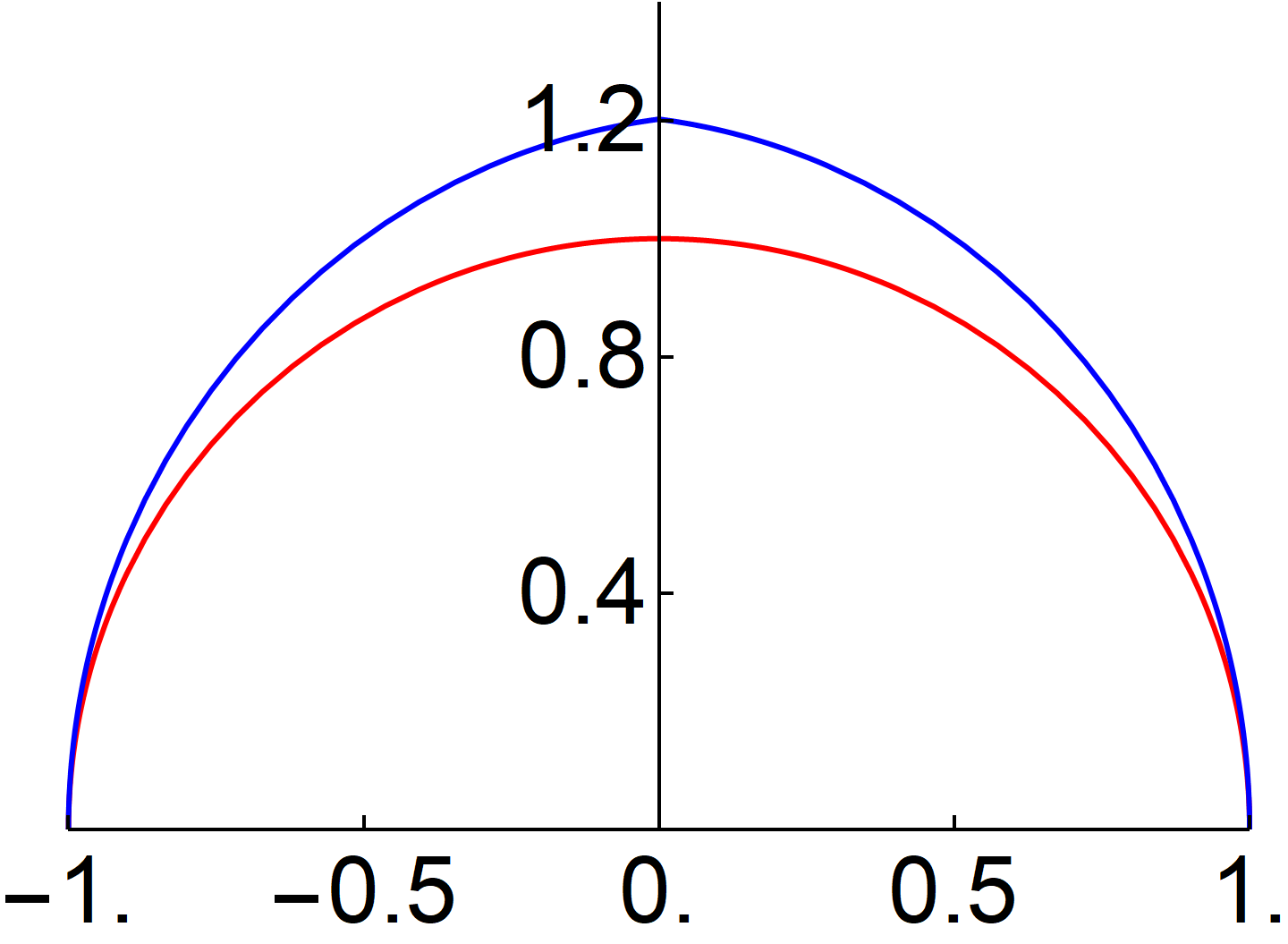}}
		}
		\vspace{0.5cm}
		\caption{Graphs of the solution $u(t)$ of  the linear wave equation \eqref{eq:wlinear} with $g=\varphi/4$, $h=0$, and $f=0$. The graphs are drawn with $t=0$, $2\pi\sqrt{\pi}/9$, $4\pi\sqrt{\pi}/9$,  $7\pi\sqrt{\pi}/9$,  $\pi\sqrt{\pi}$.}
		\label{fig:DW}
	\end{figure}
	\begin{figure}[H]
		\centering
		\mbox{\subfigure[]
			{	\includegraphics[scale=0.2]{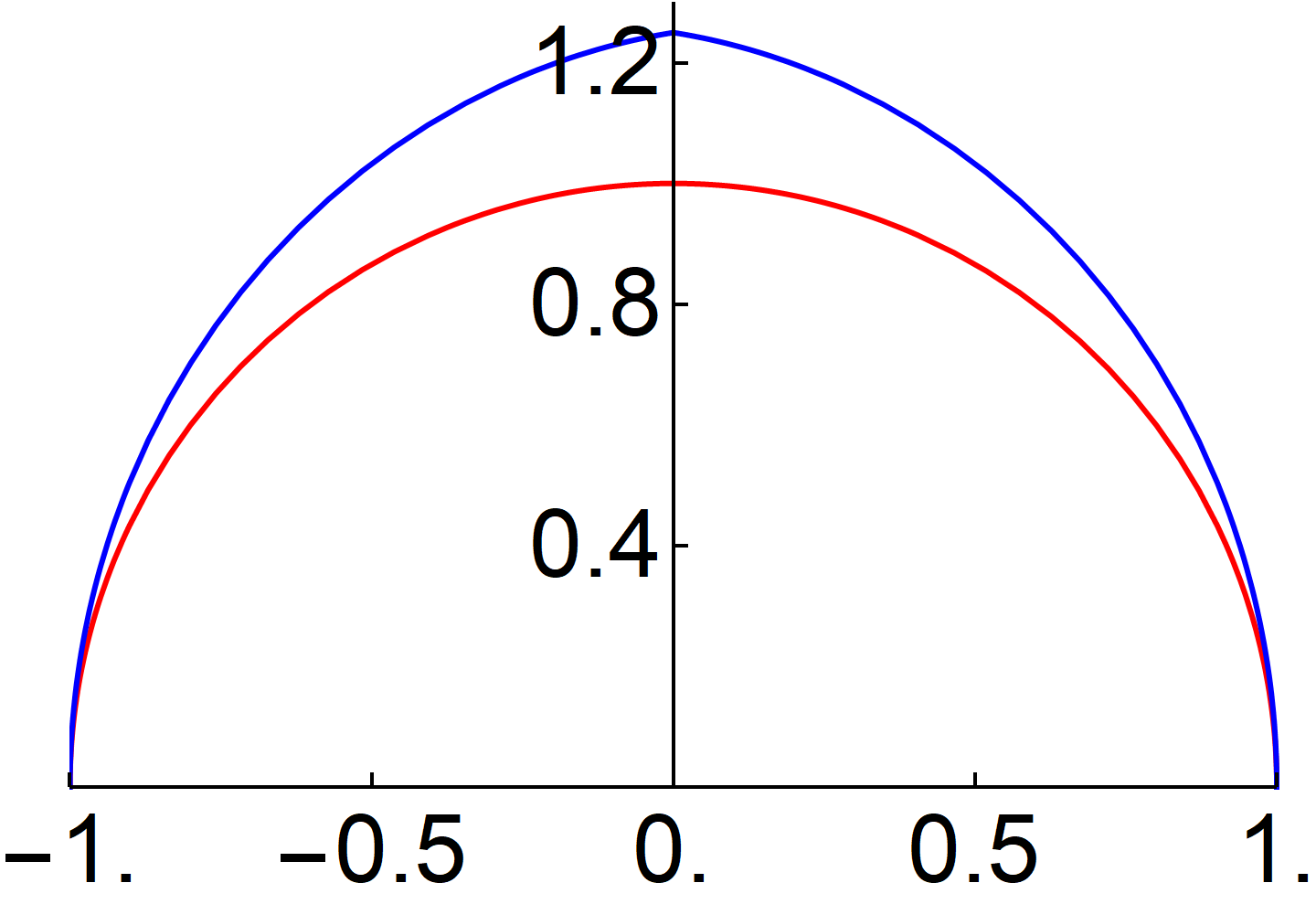}}
		}\quad
		\mbox{\subfigure[]
			{	\includegraphics[scale=0.2]{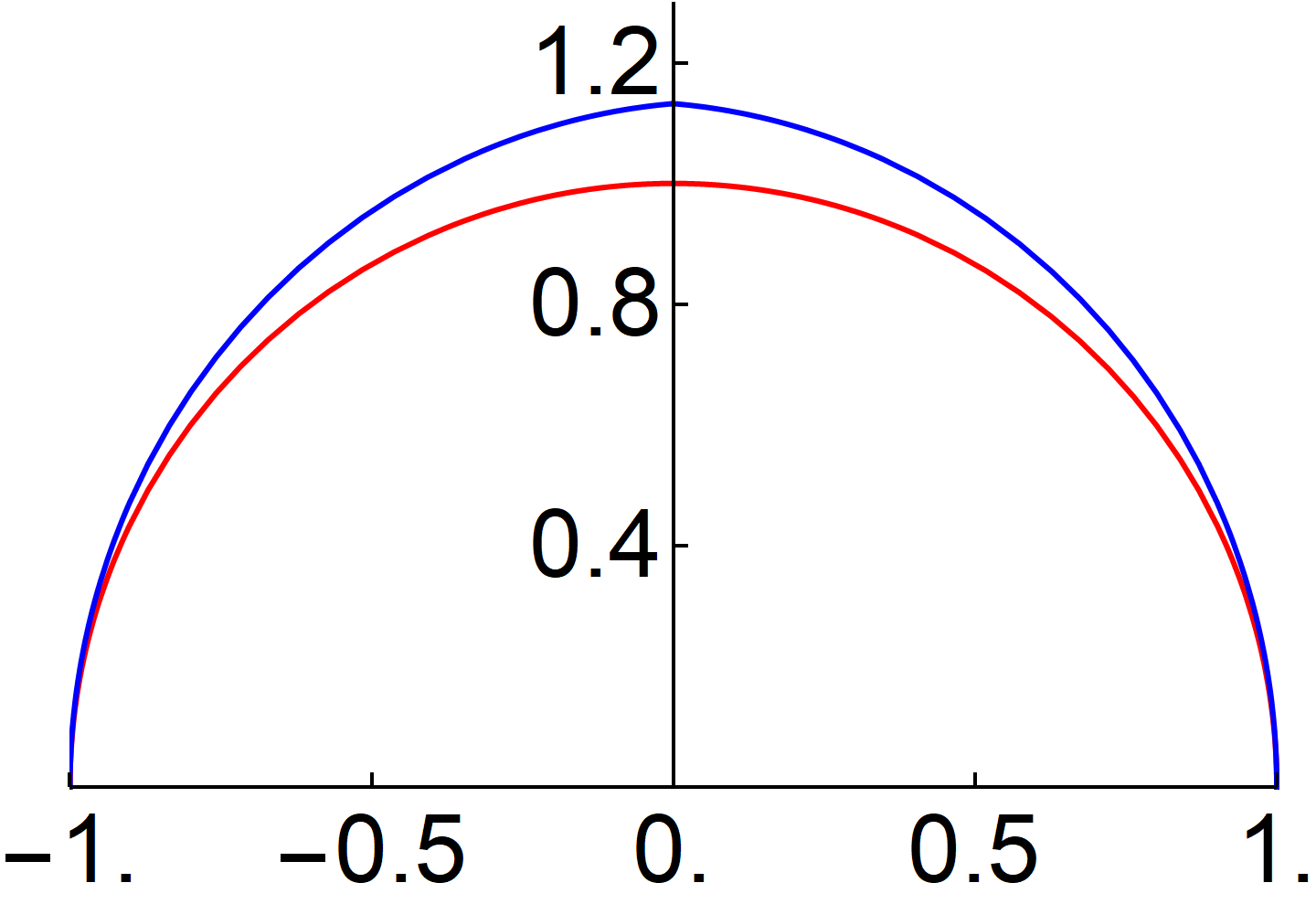}}
		}\quad
		\mbox{\subfigure[]
			{	\includegraphics[scale=0.2]{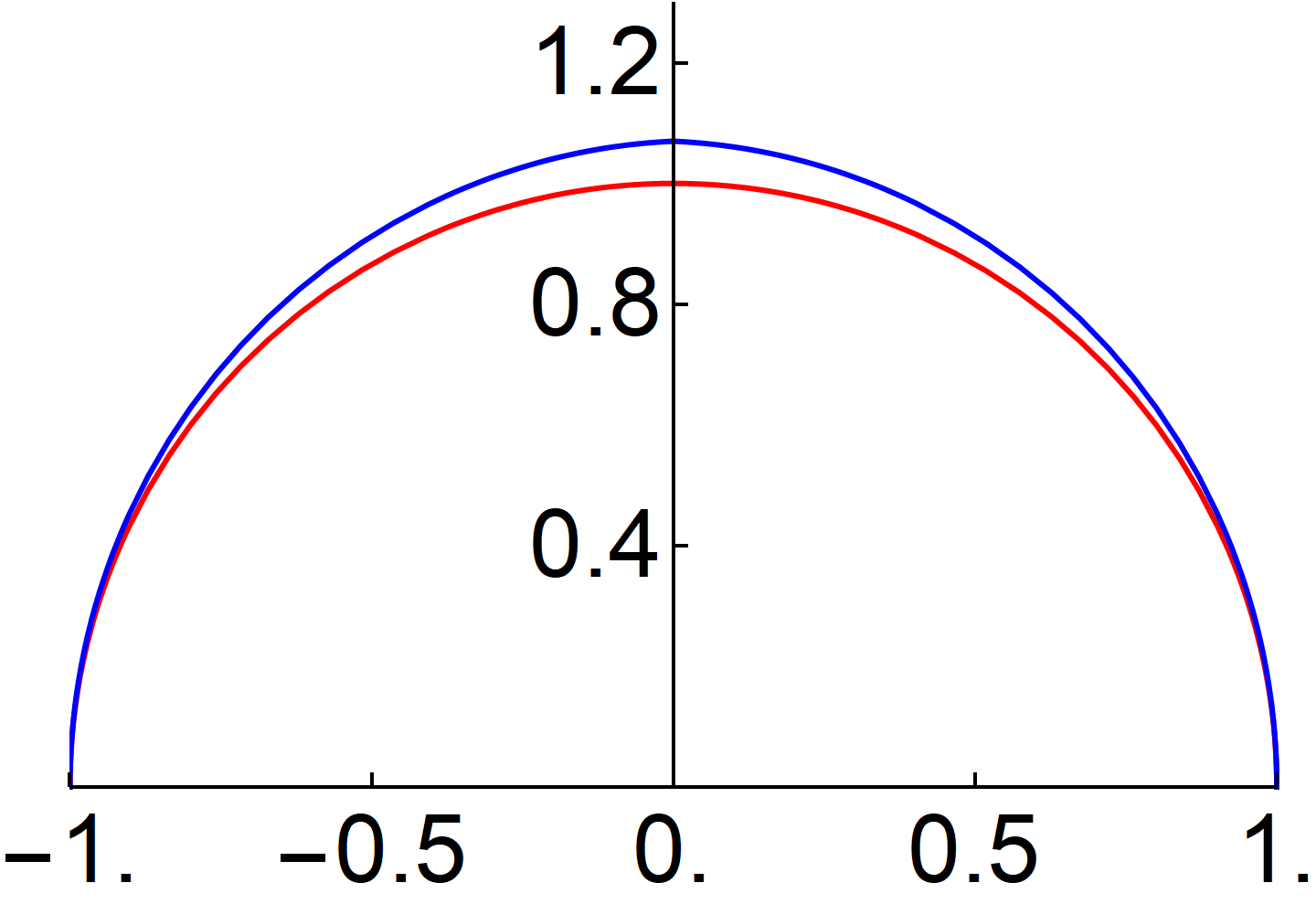}}
		}\quad
		\mbox{\subfigure[]
			{	\includegraphics[scale=0.2]{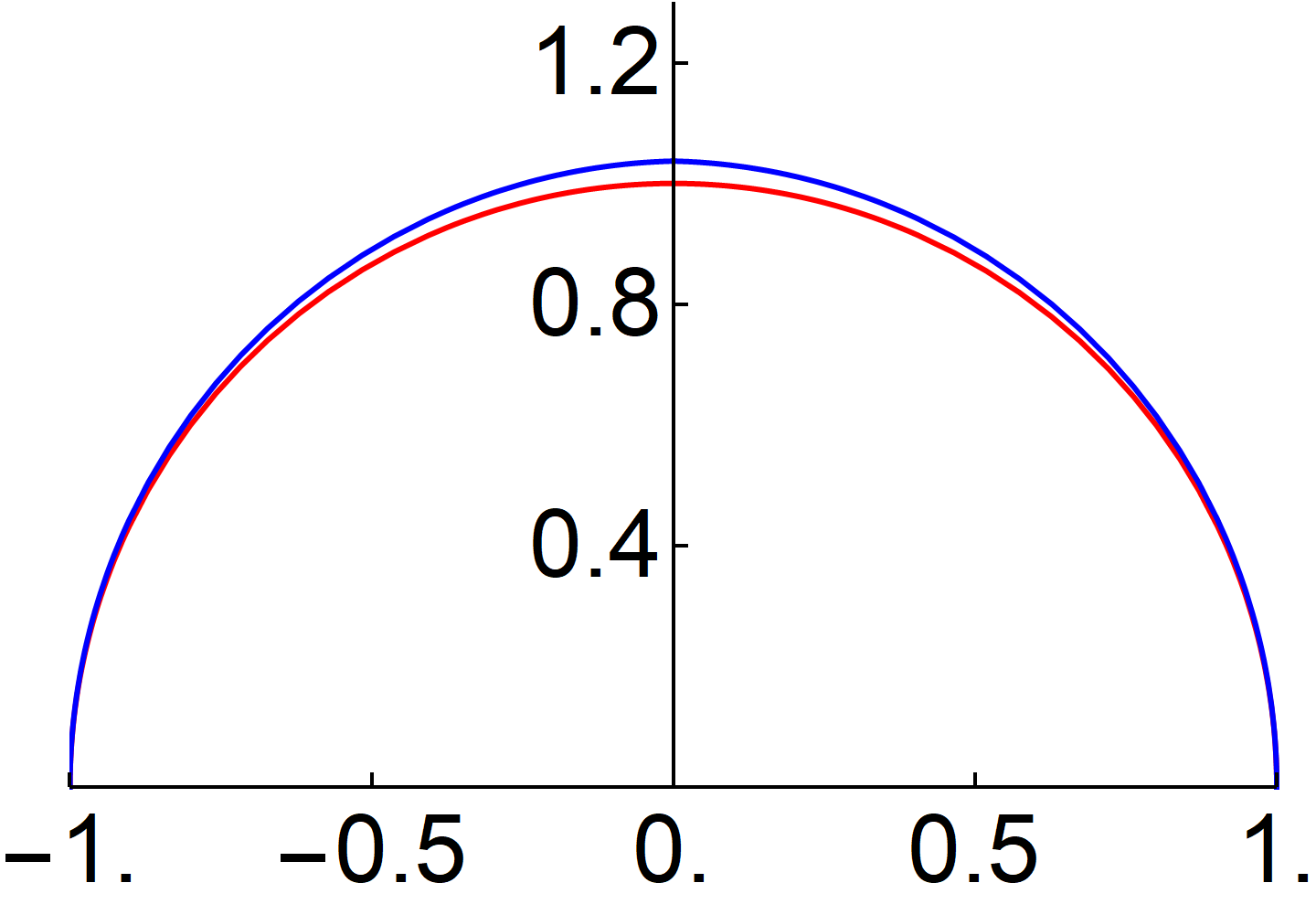}}
		}\quad
		\mbox{\subfigure[]
			{	\includegraphics[scale=0.2]{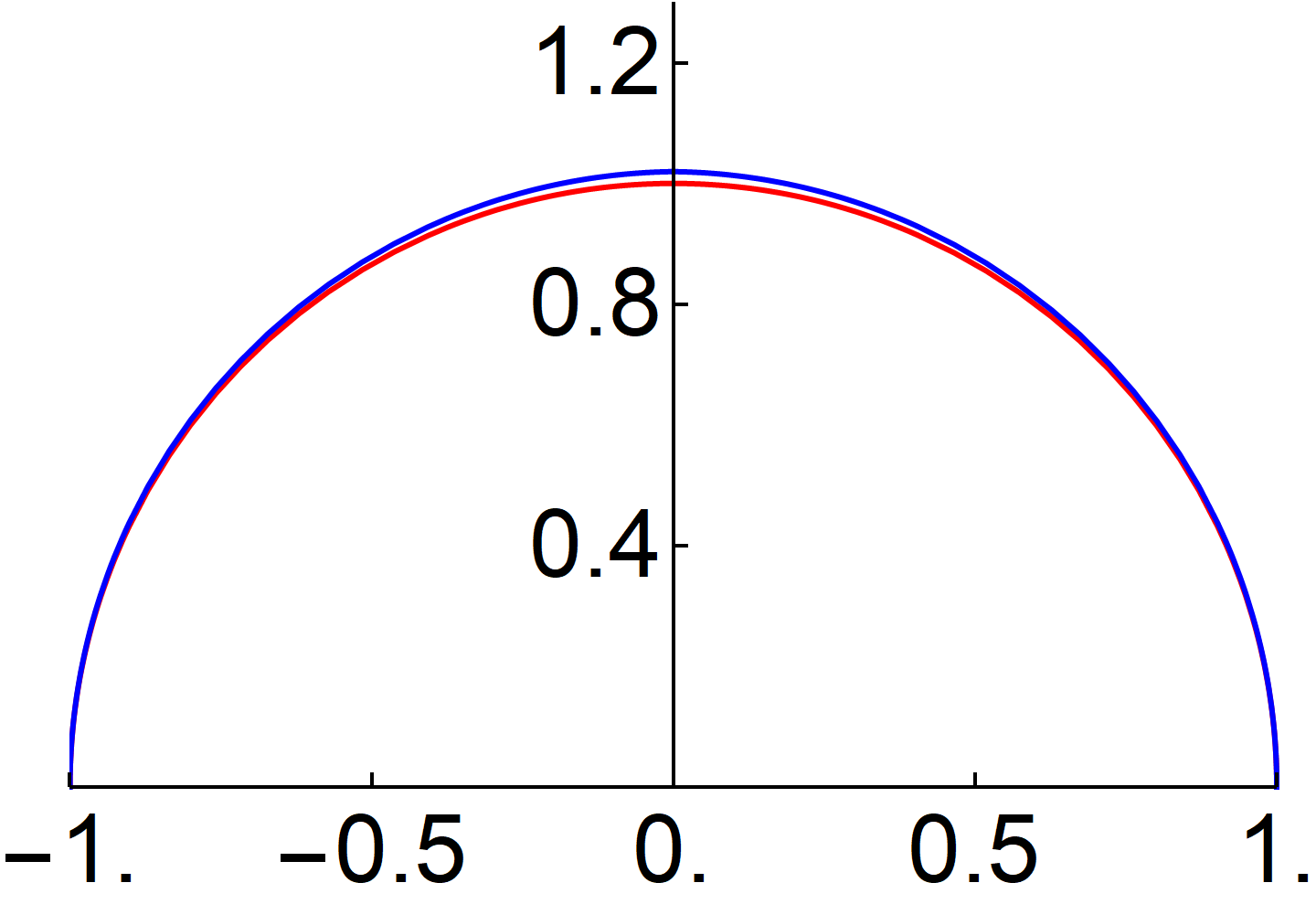}}
		}
		\vspace{0.5cm}
		\caption{Graphs of $u(t)$  of the linear heat equation \eqref{eq:hlinear} for $t=0$, $1/2$, $1$, $3/2$, $2$. The graphs are drawn with $g=\varphi/4$ and $f=0$. $u(t)$ decreases to $0$ as $t\to\infty$.}
		\label{fig:HDF0}
	\end{figure}
	
	\begin{figure}[H]
		\centering
		\mbox{\subfigure[]
			{	\includegraphics[scale=0.2]{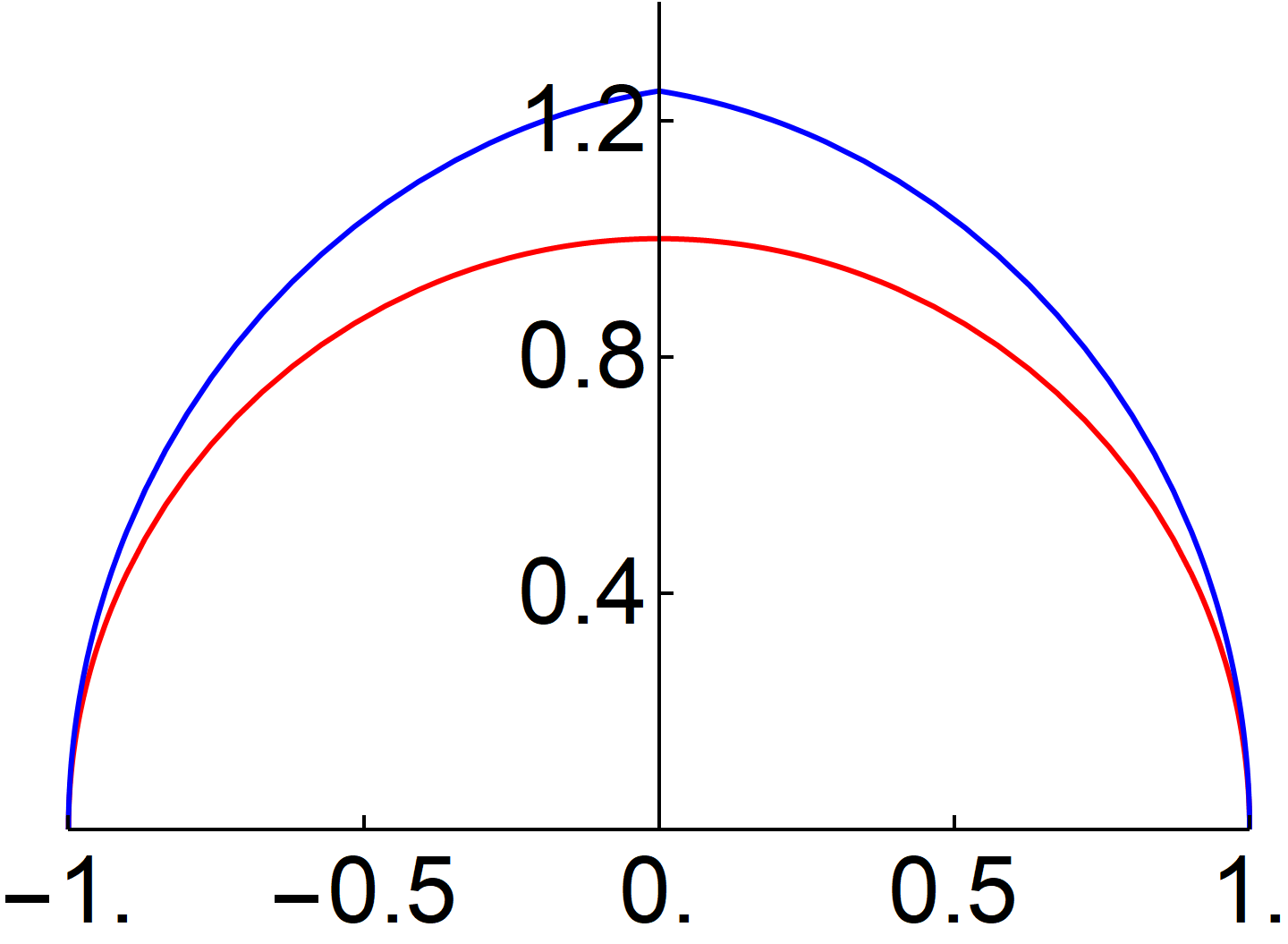}}
		}\quad
		\mbox{\subfigure[]
			{	\includegraphics[scale=0.2]{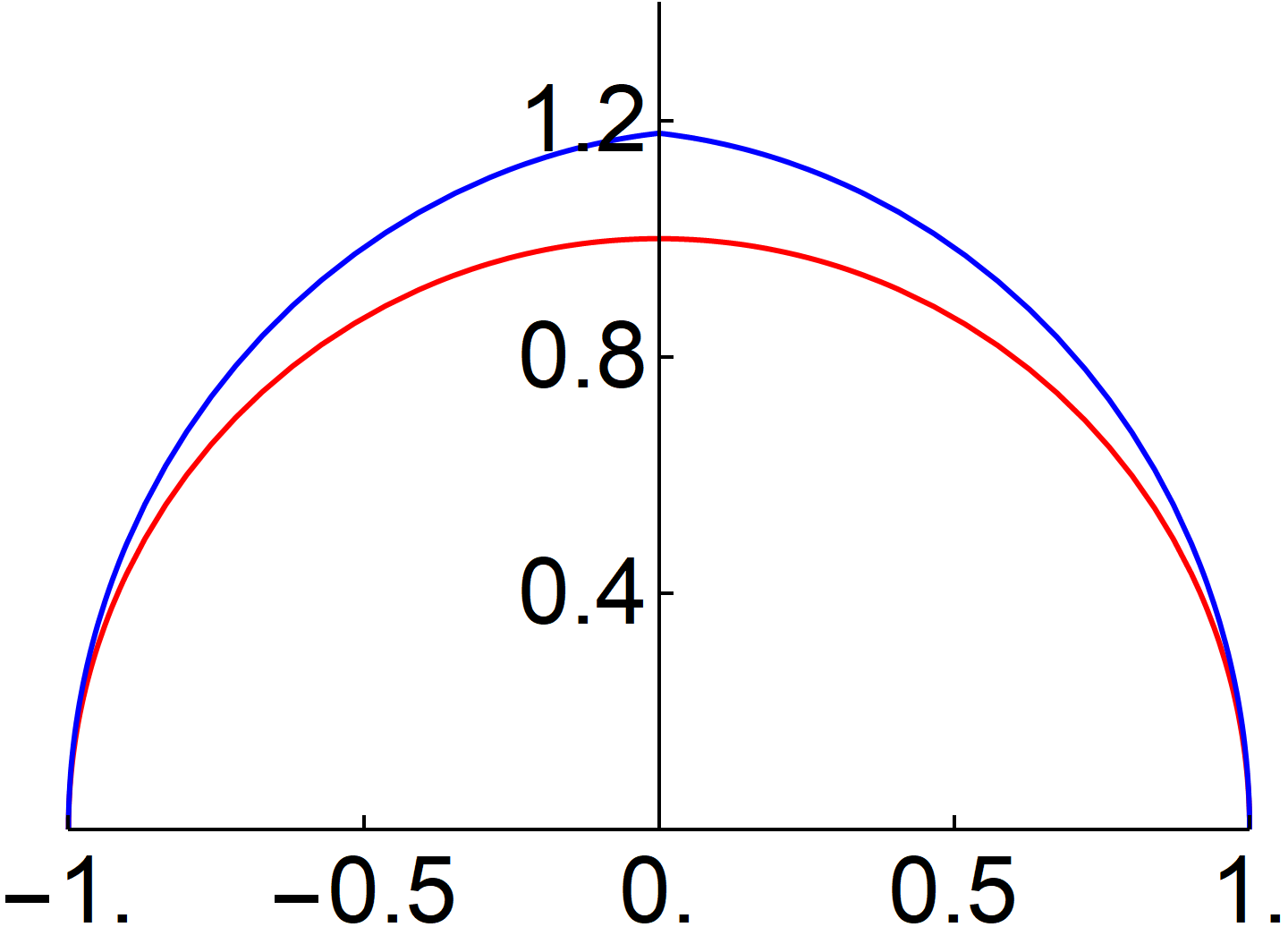}}
		}\quad
		\mbox{\subfigure[]
			{	\includegraphics[scale=0.2]{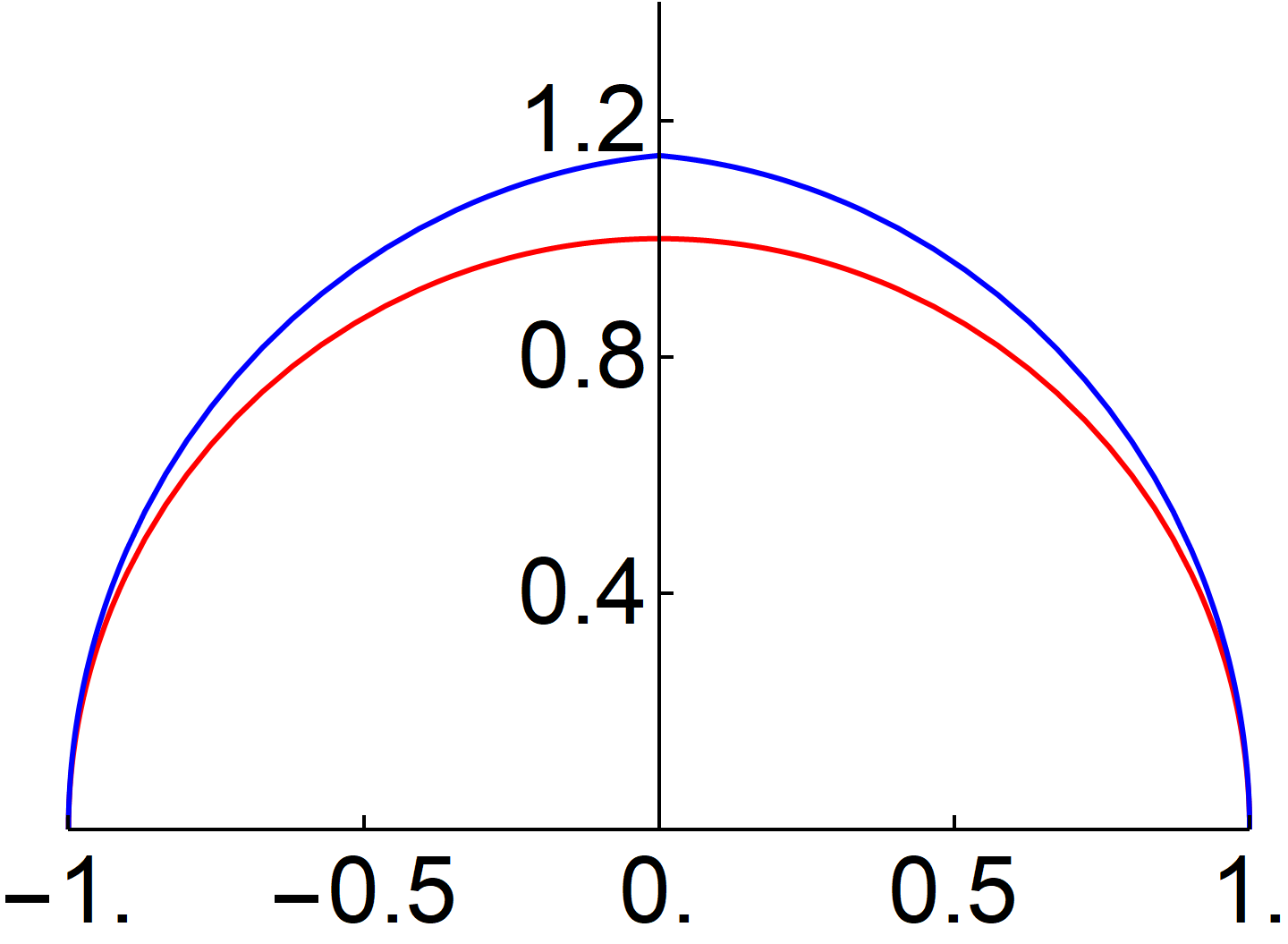}}
		}\quad
		\mbox{\subfigure[]
			{	\includegraphics[scale=0.2]{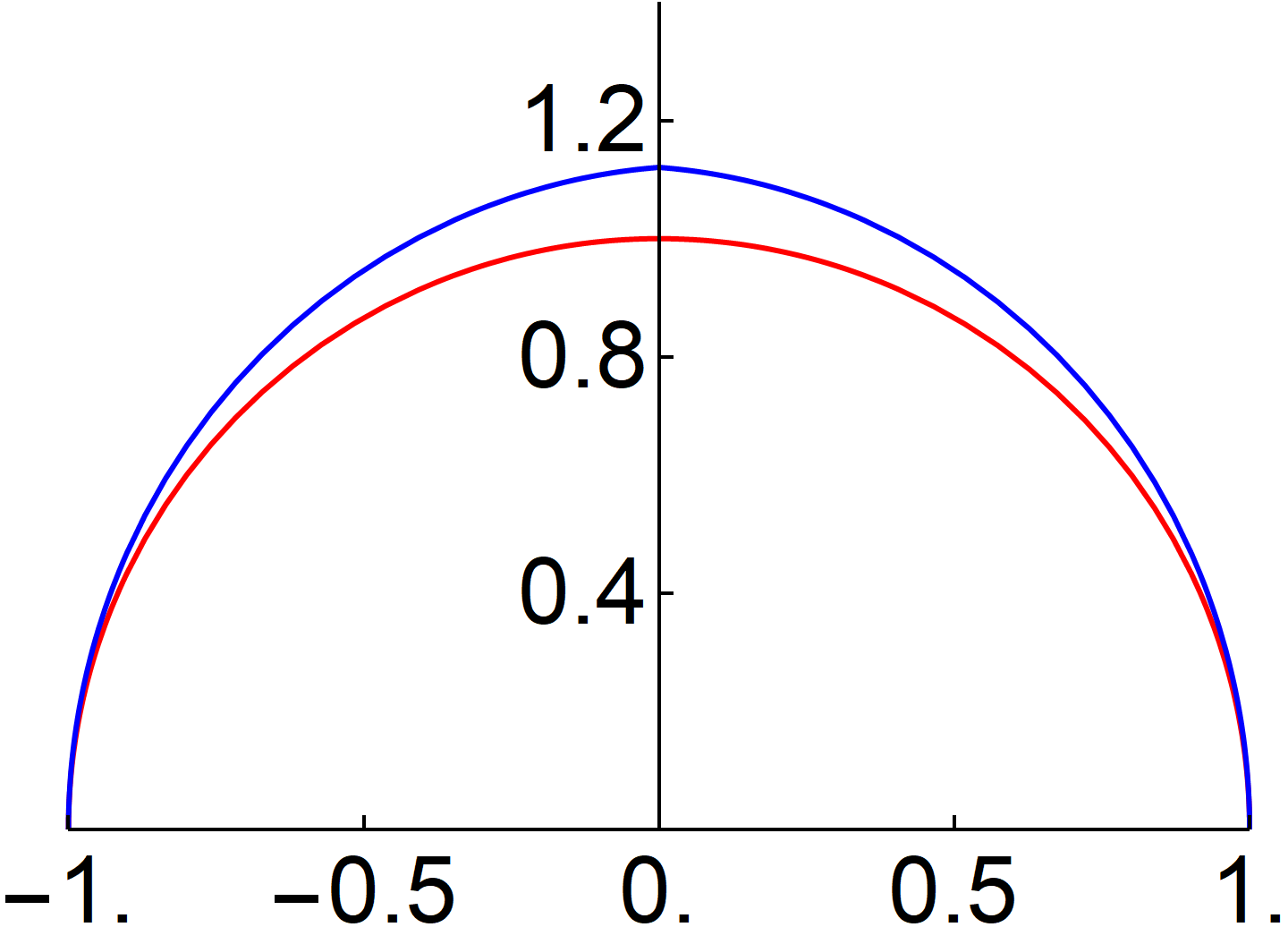}}
		}\quad
		\mbox{\subfigure[]
			{	\includegraphics[scale=0.2]{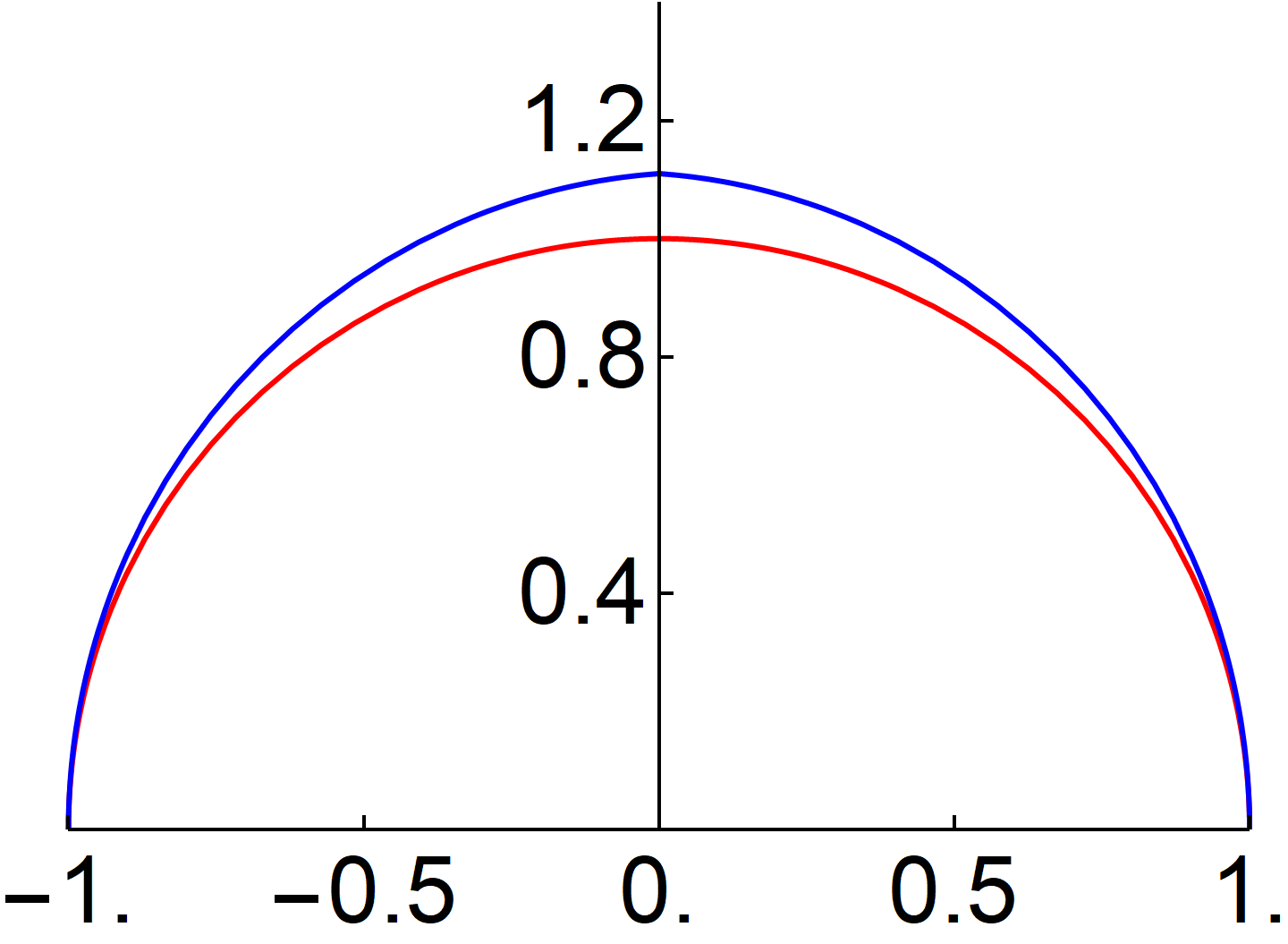}}
		}
		\vspace{0.5cm}
		\caption{Graphs of $u(t)$ of  the linear heat equation \eqref{eq:hlinear} for $t=0$, $1/2$, $1$, $3/2$, $2$. The graphs are drawn with $g=\varphi/4$ and $f=1/8$. $u(t)$ decreases to a nonzero equilibrium less than its initial value as $t\to\infty$.}
		\label{HDF0.5}
	\end{figure}

	\begin{figure}[H]
		\centering
		\mbox{\subfigure[]
			{	\includegraphics[scale=0.2]{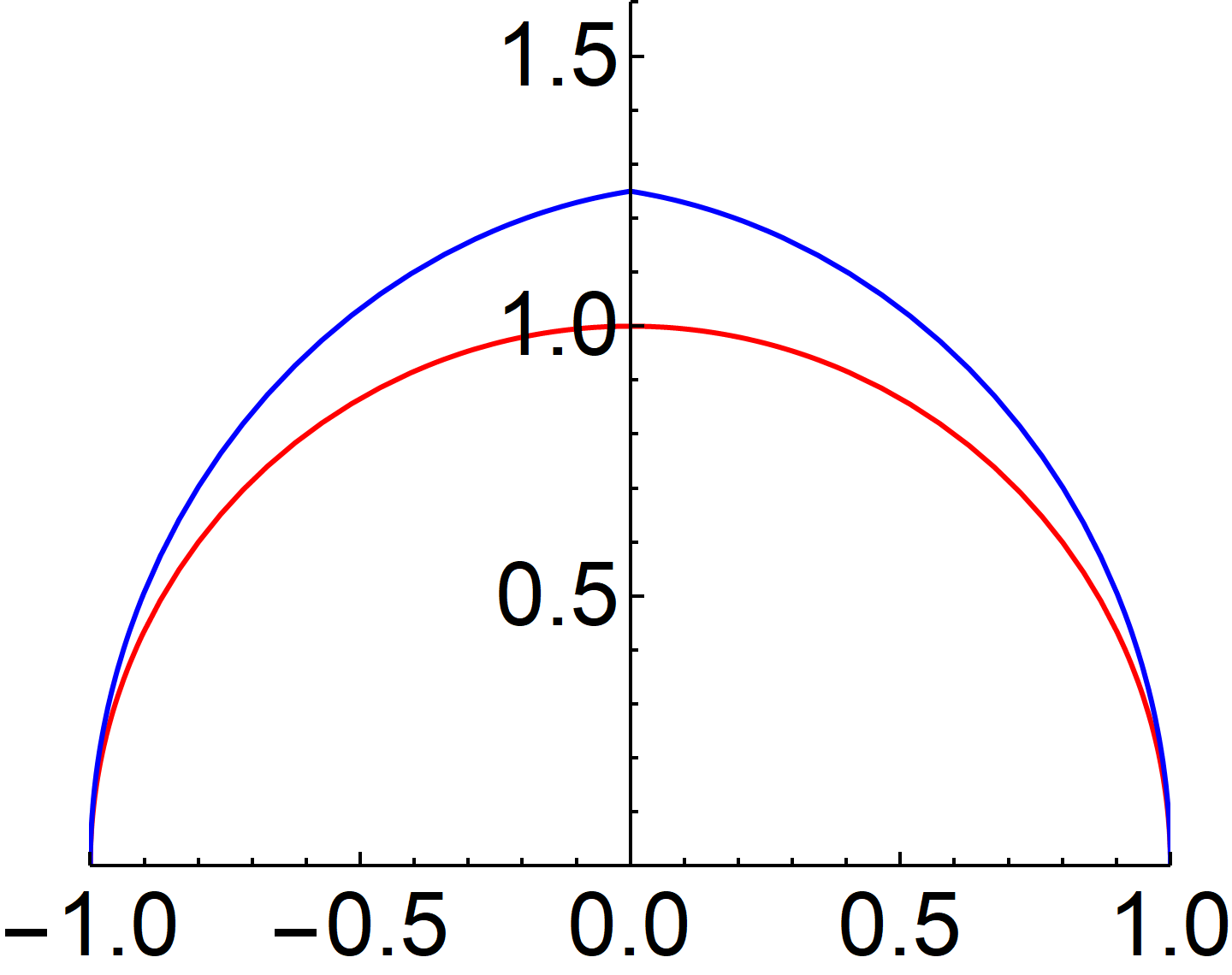}}
		}\quad
		\mbox{\subfigure[]
			{	\includegraphics[scale=0.2]{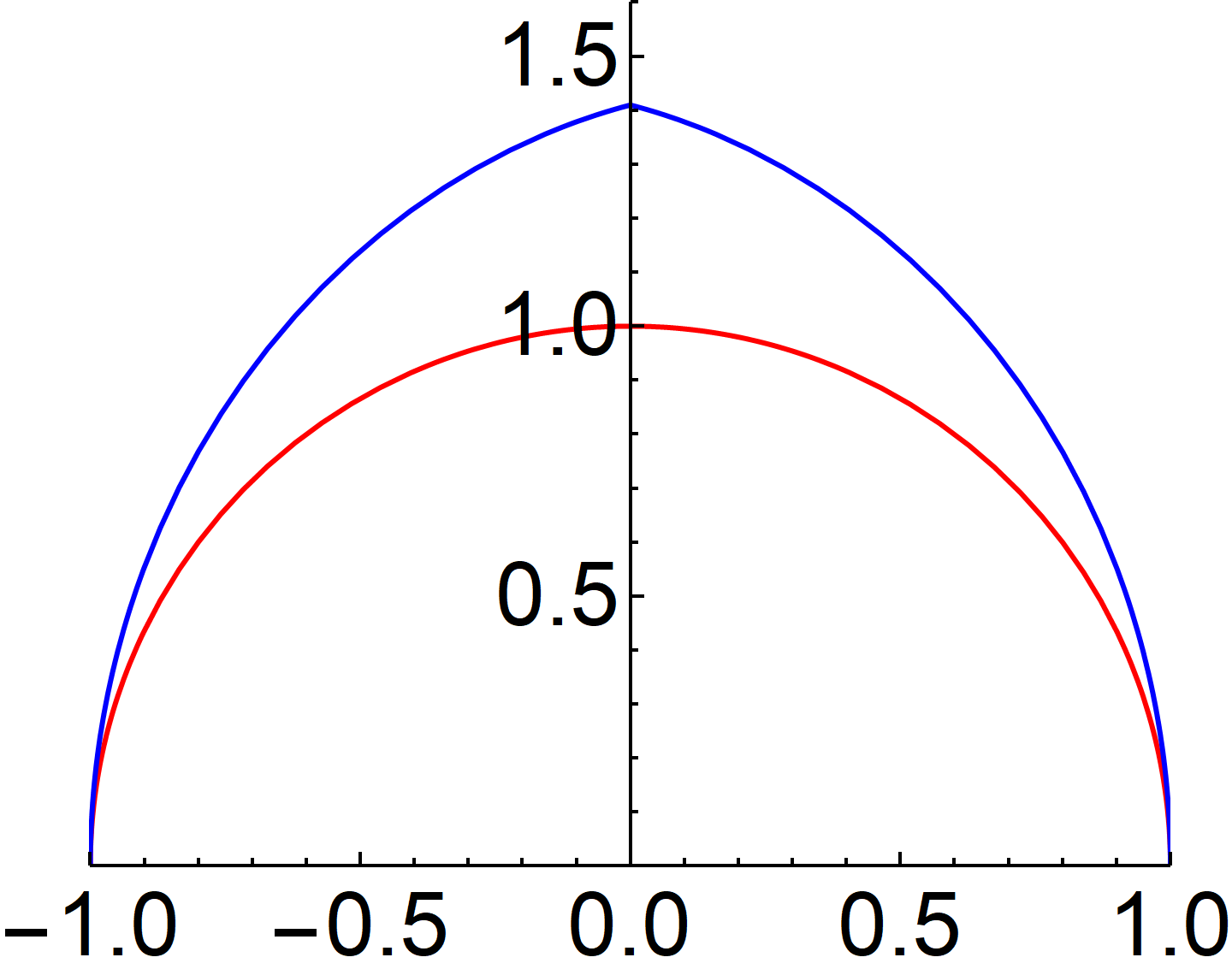}}
		}\quad
		\mbox{\subfigure[]
			{	\includegraphics[scale=0.2]{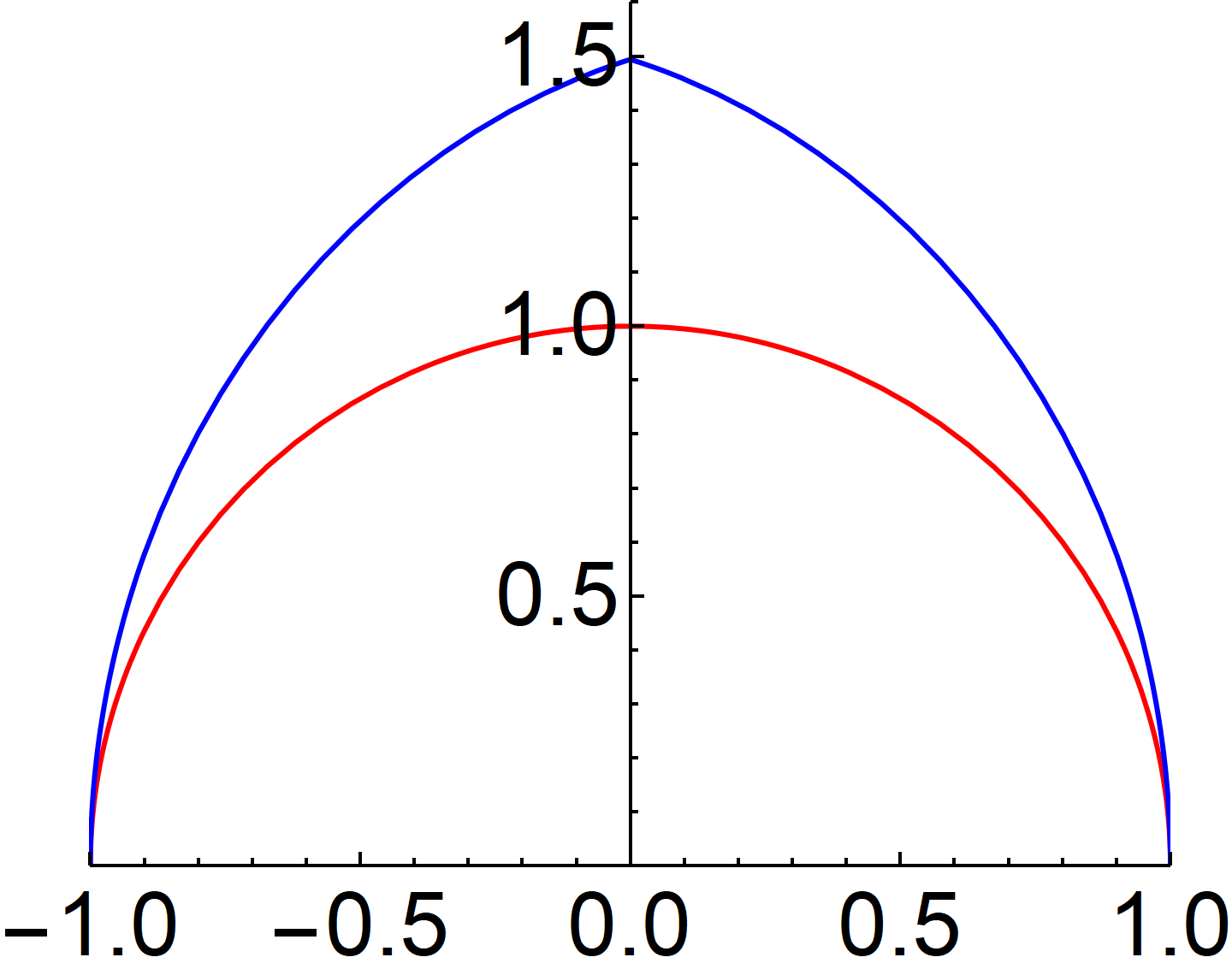}}
		}\quad
		\mbox{\subfigure[]
			{	\includegraphics[scale=0.2]{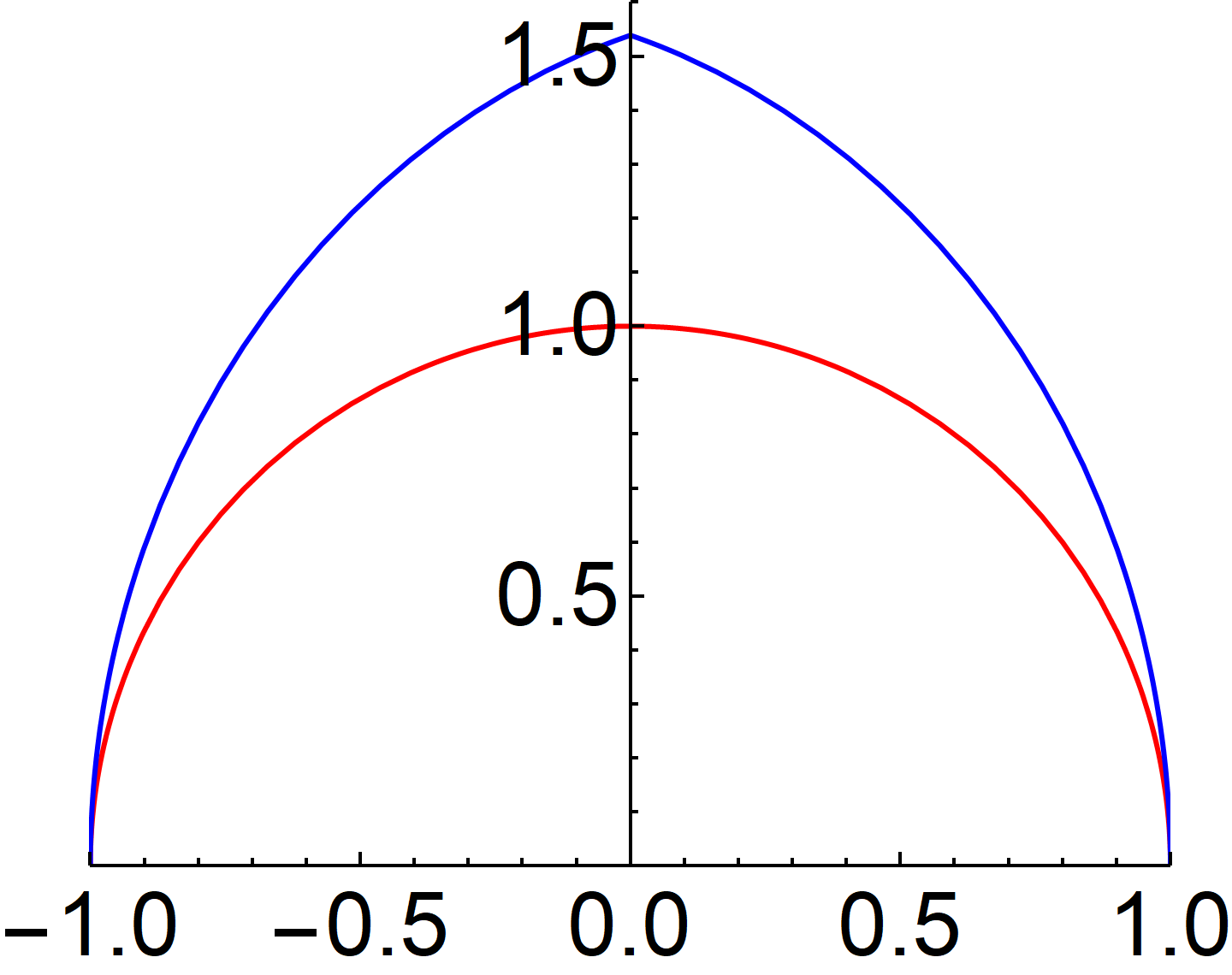}}
		}\quad
		\mbox{\subfigure[]
			{	\includegraphics[scale=0.2]{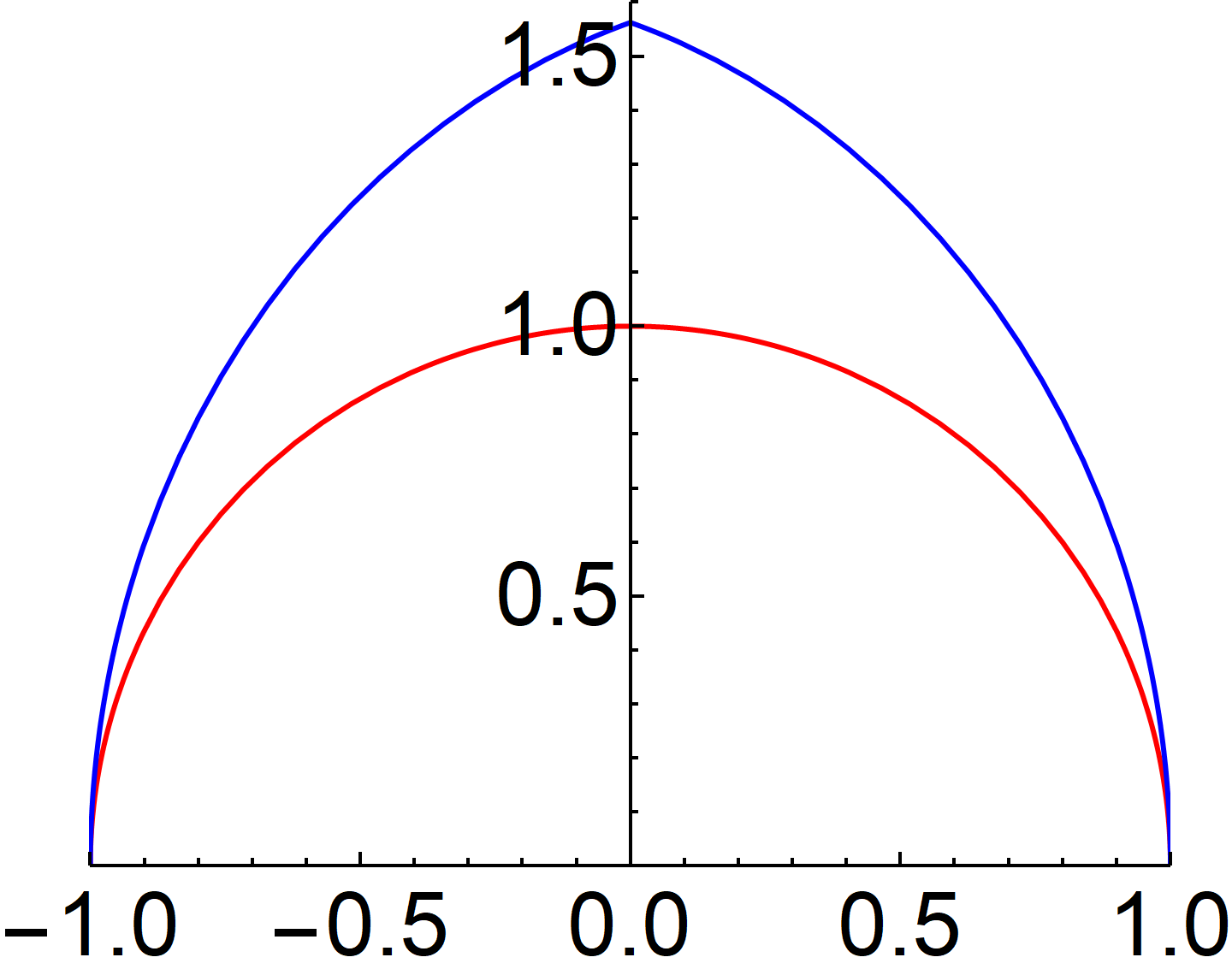}}
		}
		\vspace{0.5cm}
		\caption{Graphs of $u(t)$ of the linear heat equation \eqref{eq:hlinear} for $t=0$, $1/2$, $1$, $3/2$, $2$. The graphs are drawn with $g=\varphi/4$ and $f=3/4$. $u(t)$ increases to an equilibrium state greater than its initial value as $t\to\infty$.}
		\label{fig:HDF3}
	\end{figure}

	\begin{exam}\label{ex:efn}
		Let $M=\mathbb{S}^1:=\{(\cos(\theta+\pi/2),\sin(\theta+\pi/2)):-\pi\leq\theta\leq \pi\}$. Let $\mu$ be two Dirac point mass at the north pole $\theta=0$ and  the south pole $\theta=\pi$. Let $\varphi_1=c_1$ and
		\begin{align*}
			\varphi_2=\left\{
			\begin{aligned}
				&\frac{	2\theta}{\pi}+1&\qquad \theta\in [-\pi,0],\\
				&		\frac{-2\theta}{\pi}+1&\qquad \theta\in (0,\pi].\\
			\end{aligned}	
			\right.
		\end{align*}
		Then $\varphi_1$ and $\varphi_2$ are eigenfunctions of $-\Delta_\mu$ with the corresponding eigenvalues are $\lambda_1=0$ and $\lambda_2=4/\pi$, respectively. Moreover,  $\underline{\operatorname{dim}}_{\infty}(\mu)=0$.
	\end{exam}
	The proof of Example \ref{ex:efn} is similar to that of Example \ref{ex:efd}; we omit the proof.

	\begin{exam}\label{ex:slun}
		Let $M$, $\mu$, $\varphi_1$, $\varphi_2$, and $\lambda$ be defined in  Example \ref{ex:efn}. 
		\begin{enumerate}
			\item [(a)] Let $g=\varphi_2/4$, $h=0$, and $f=0$. Then the weak solution of the linear wave equation \eqref{eq:wlinear} is
			\begin{align*}
				u(t)=\frac{\varphi_2}{4}\cos\Big(\frac{2t}{\sqrt{\pi}}\Big).
			\end{align*}
		Figure \ref{NW} shows the periodic wave motion.
			\item [(b)] Let $g=c_1+c_2\varphi_2$ and $f=c$, where $c$ is a constant. Then the weak solution of the linear heat equation \eqref{eq:hlinear} is
			\begin{align*}
				u(t)=c_1+\varphi_2\Big( e^{-4t/\pi}\Big(c_2-\frac{\pi}{4}c\Big)+\frac{\pi}{4}c\Big).	
			\end{align*}
			For example, if $c_1=1$, $c_2=1$, and $c=0$, then $	u(t)=1+\varphi_2 e^{-4t/\pi}.$ In this case, the temperature of points in the upper semicircle decreases to 1, while that of points on the lower semicircle increases to 1. Total heat energy is conserved.
			\item [(c)] Let $g=\varphi_2/4$ and $f=0$. Then the weak solution of the linear Schr\"odinger equation \eqref{eq:slinear} is
			\begin{align*}
				u(t)=\frac{\varphi_2}{4} e^{-i4t/\pi}.	
			\end{align*}
		Figures \ref{schr} and \ref{ima} show the periodic behavior of $u(t)$.
		\end{enumerate}
		
\end{exam}

\begin{figure}[H]
	\centering
	\mbox{\subfigure[]
		{	\includegraphics[scale=0.22]{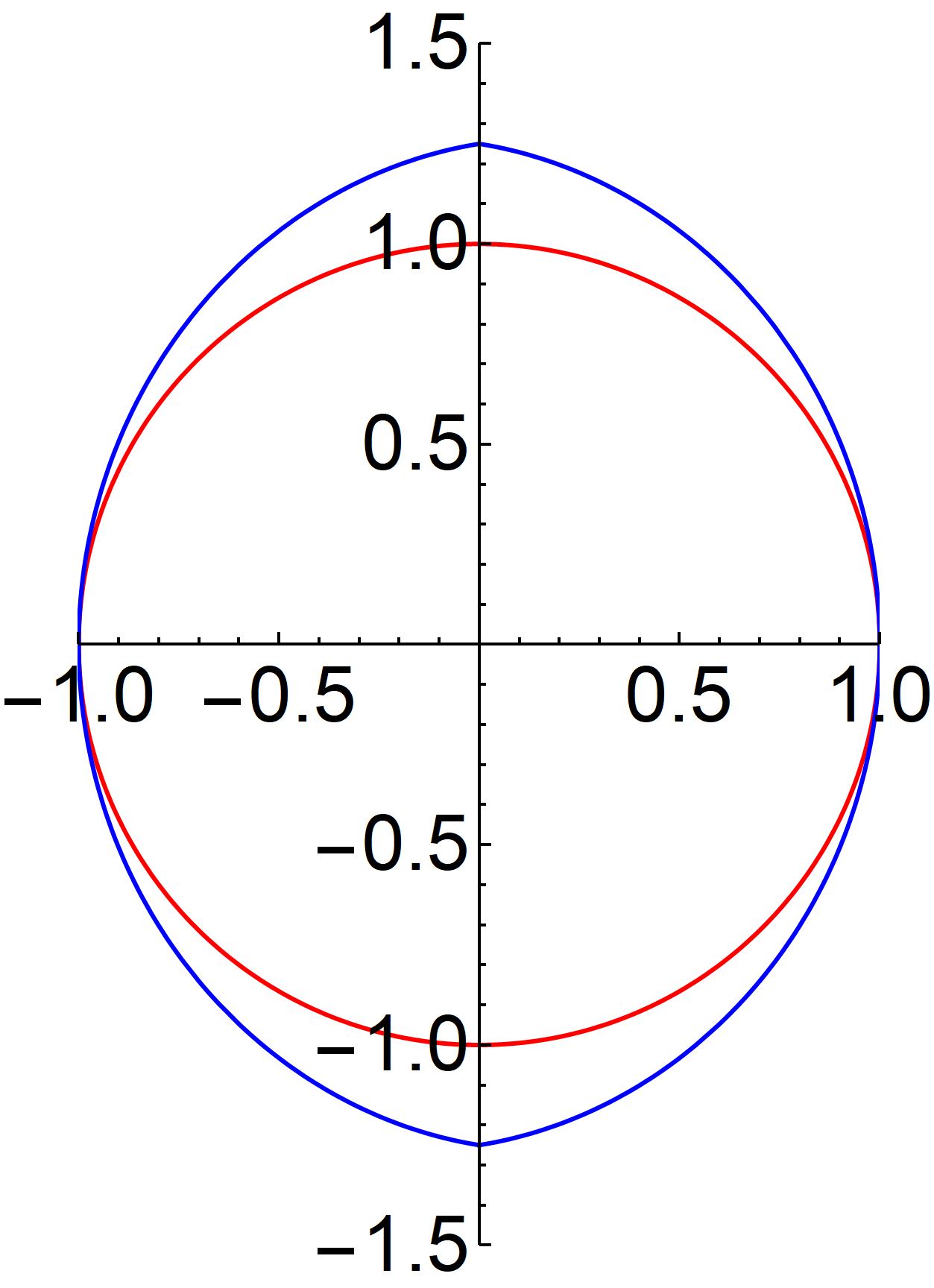}}
	}\quad
	\mbox{\subfigure[]
		{	\includegraphics[scale=0.22]{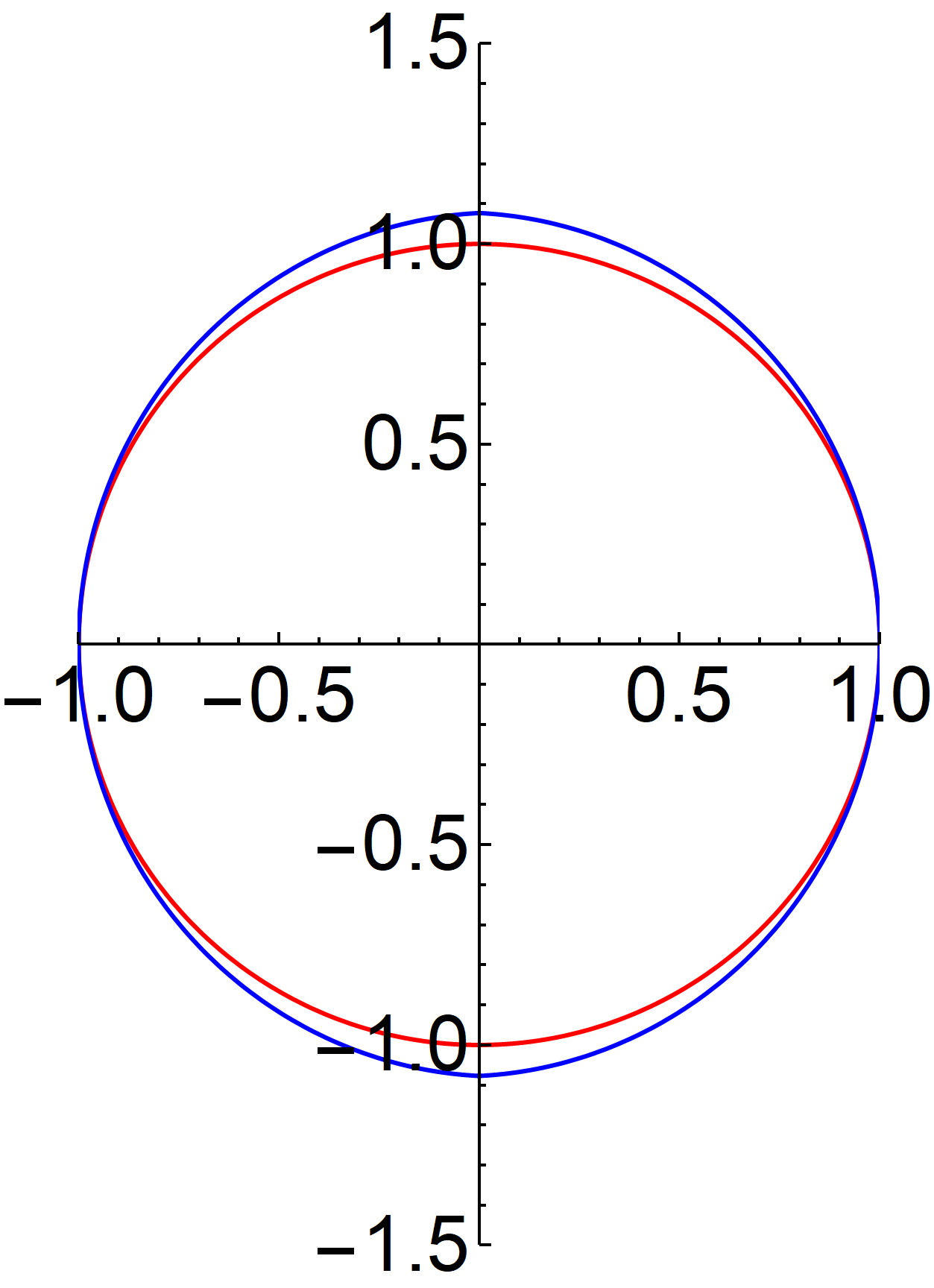}}
	}\quad
	\mbox{\subfigure[]
		{	\includegraphics[scale=0.22]{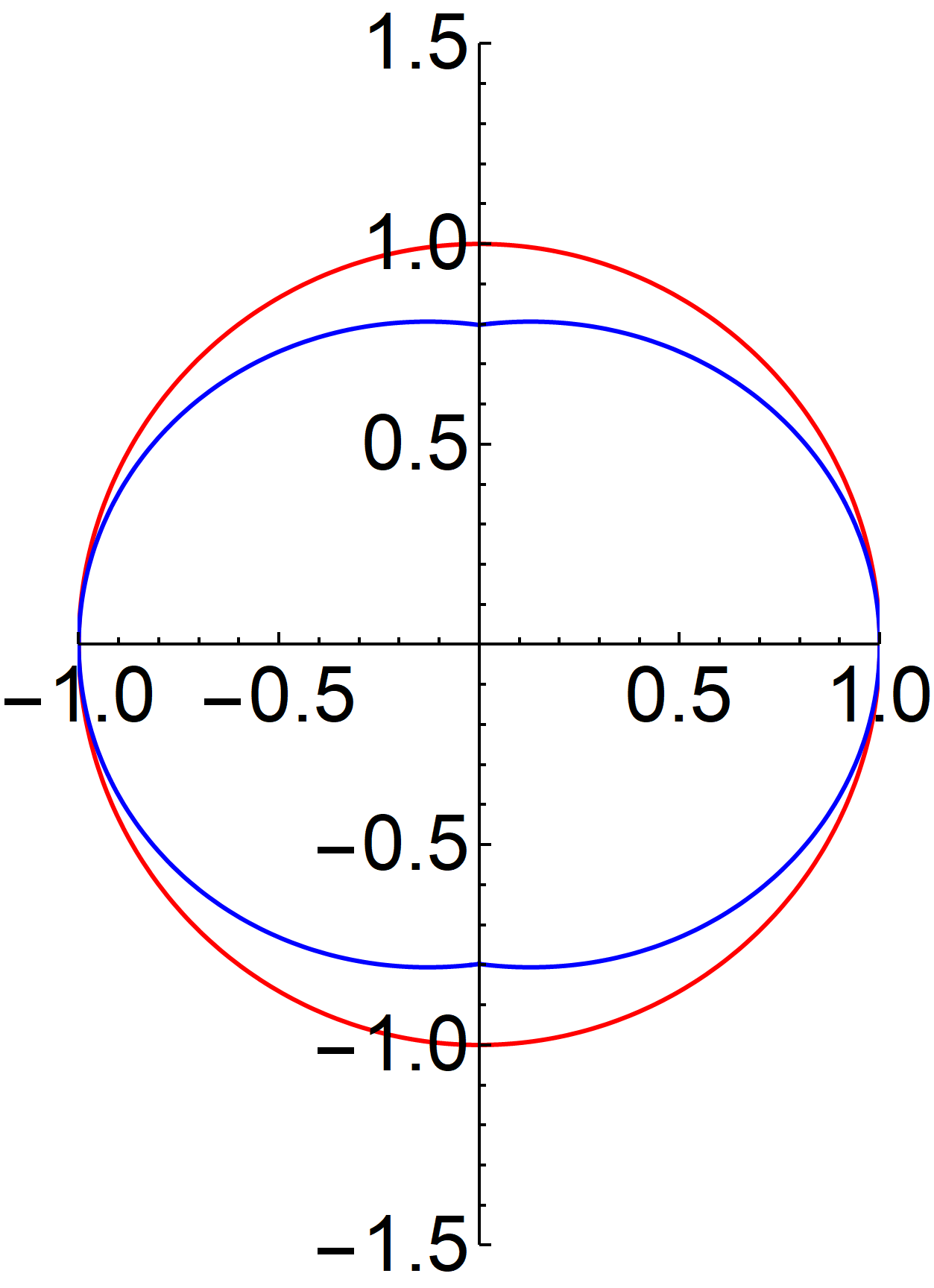}}
	}\quad
	\mbox{\subfigure[]
		{	\includegraphics[scale=0.22]{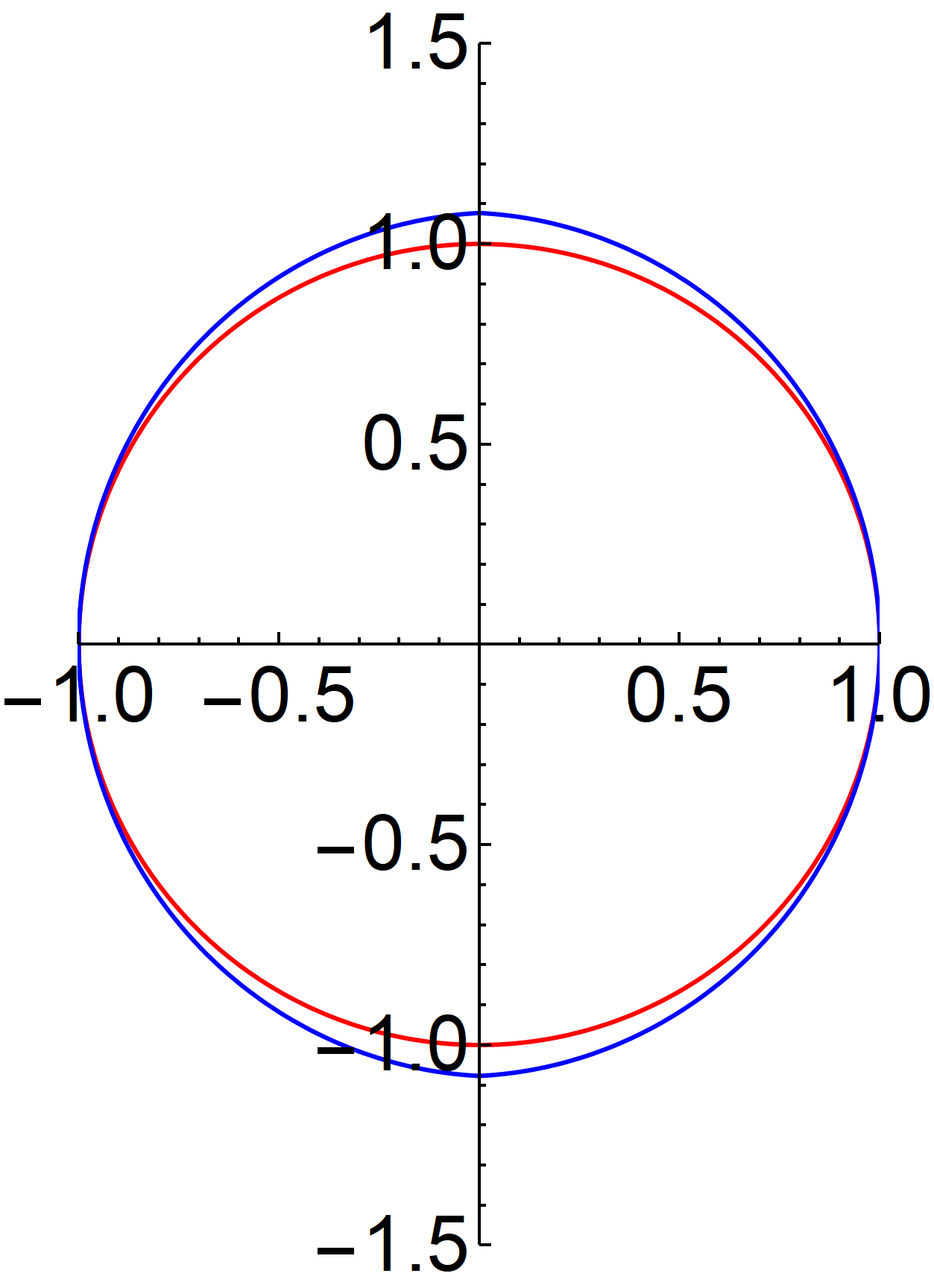}}
	}\quad
	\mbox{\subfigure[]
		{	\includegraphics[scale=0.22]{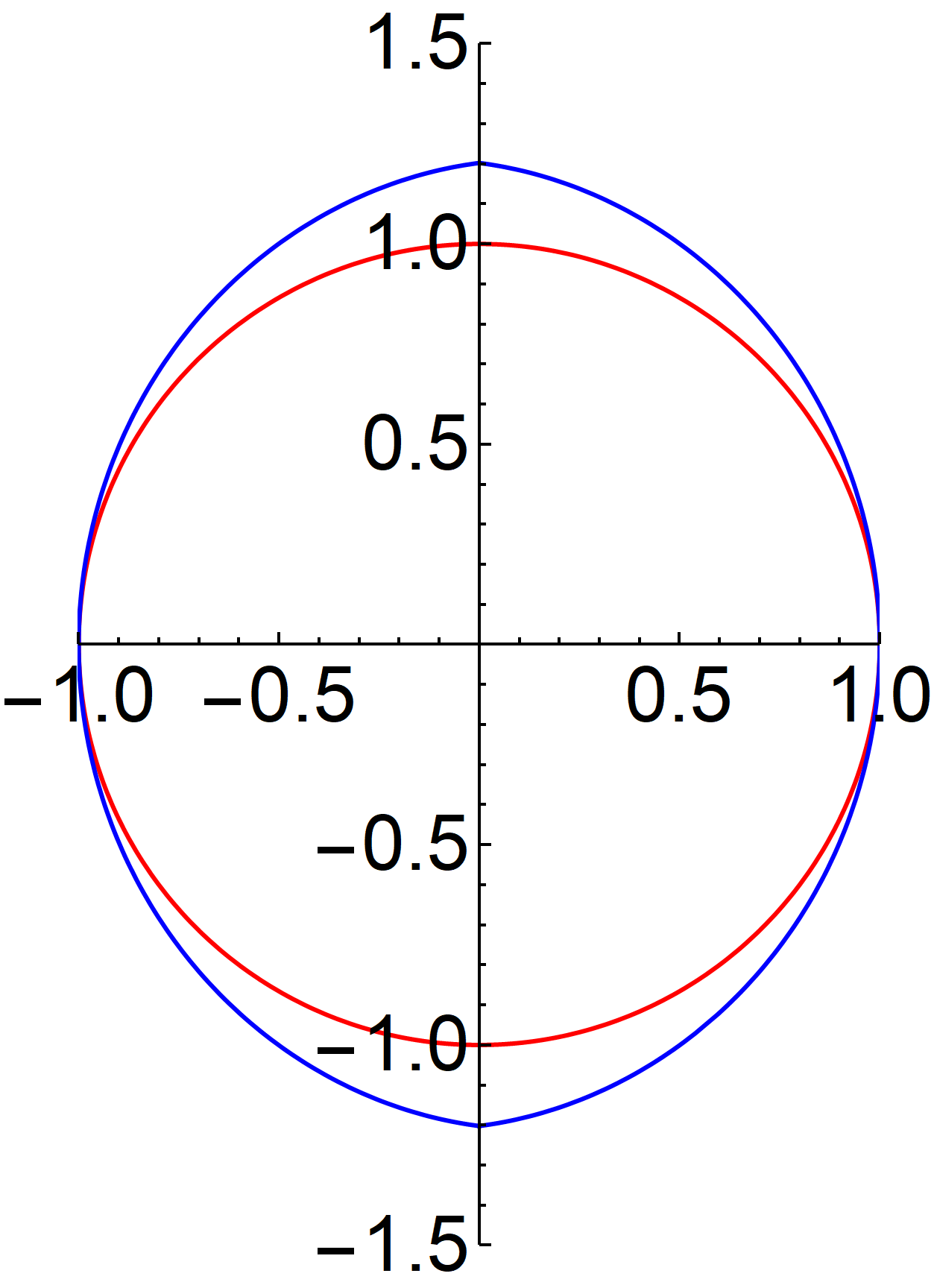}}
	}
	\vspace{0.5cm}
	\caption{Graphs of the solution $u(t)$ of the linear wave equation \eqref{eq:wlinear} with $g=\varphi_2/4$, $h=0$, and $f=0$. The graphs are drawn with  $t=k\pi\sqrt{\pi}/9$, $k=0,2,4,8,9$. Note that $u(t)$ undergoes periodic motion.}
	\label{NW}
\end{figure}

\begin{figure}[H]
	\centering
	\mbox{\subfigure[]
		{	\includegraphics[scale=0.2]{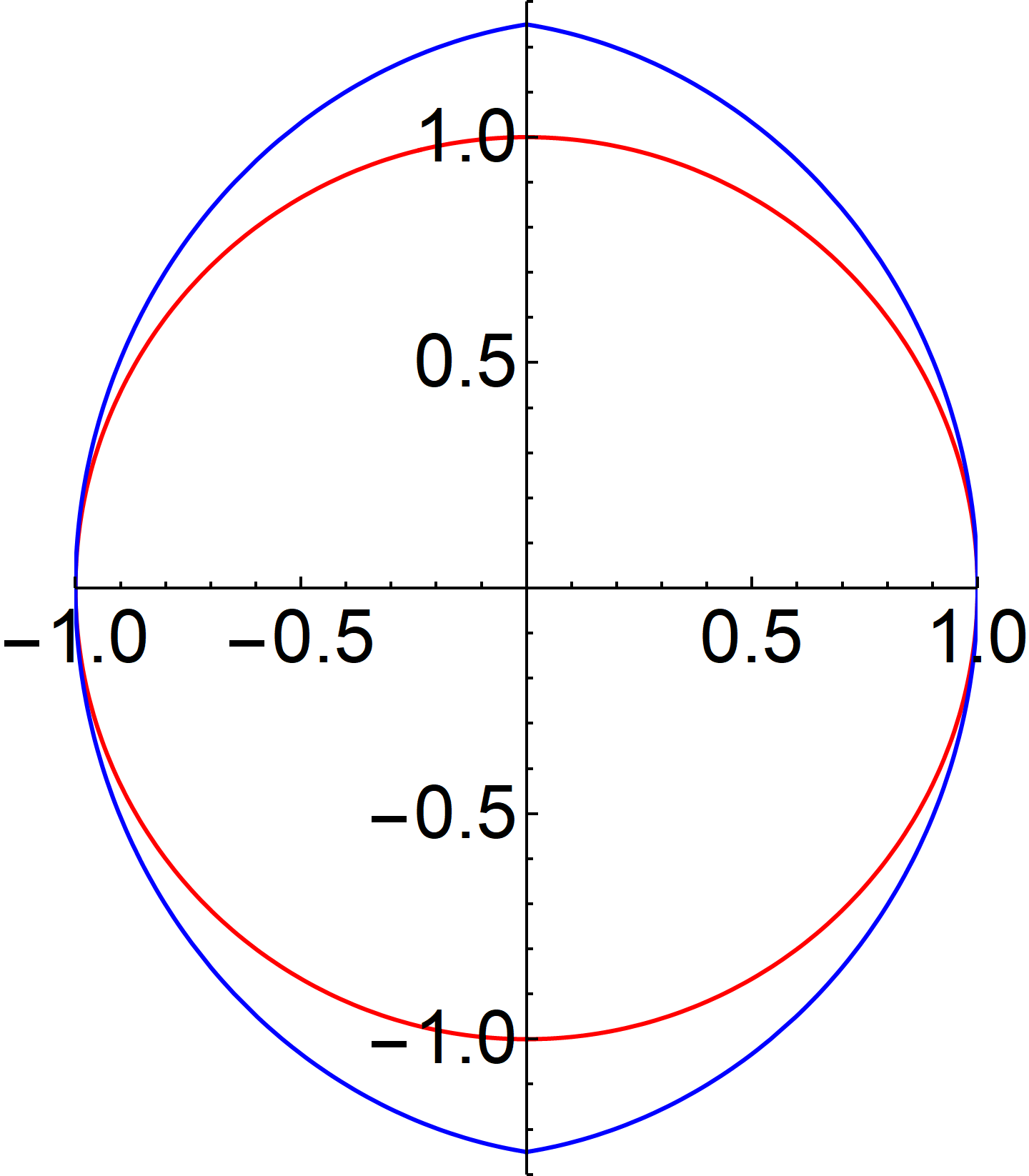}}
	}\quad
	\mbox{\subfigure[]
		{	\includegraphics[scale=0.2]{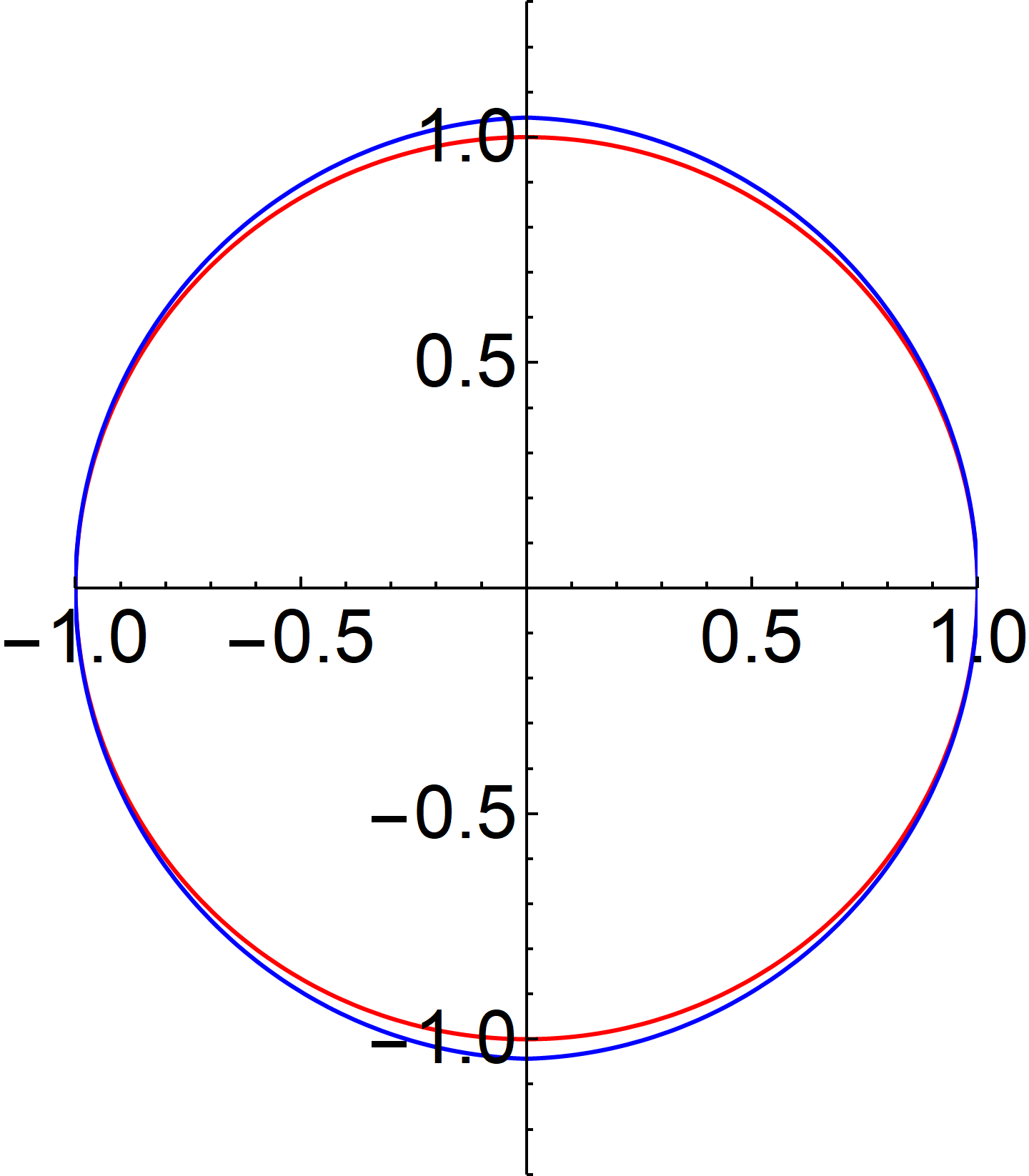}}
	}\quad
	\mbox{\subfigure[]
		{	\includegraphics[scale=0.2]{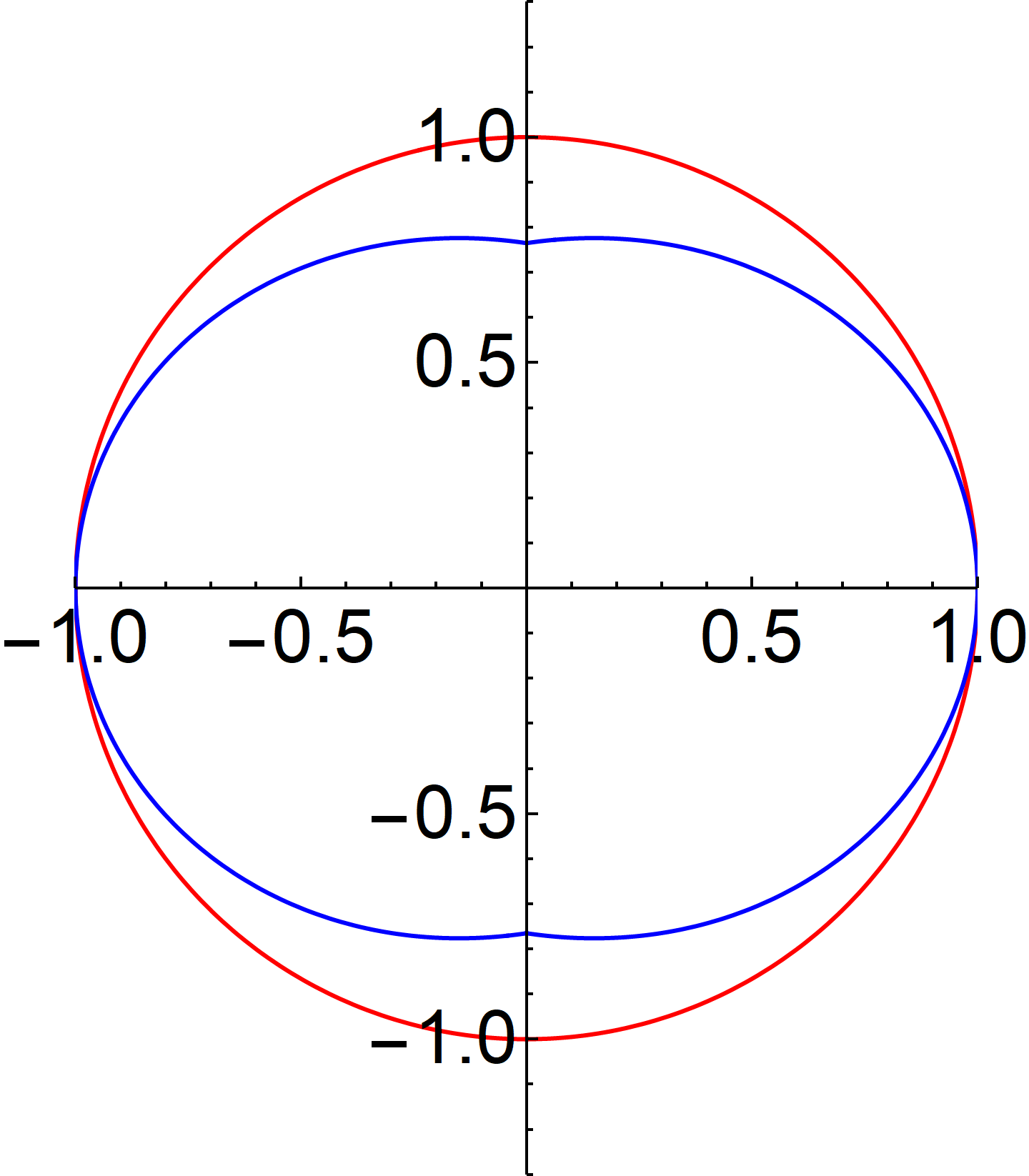}}
	}\quad
	\mbox{\subfigure[]
		{	\includegraphics[scale=0.2]{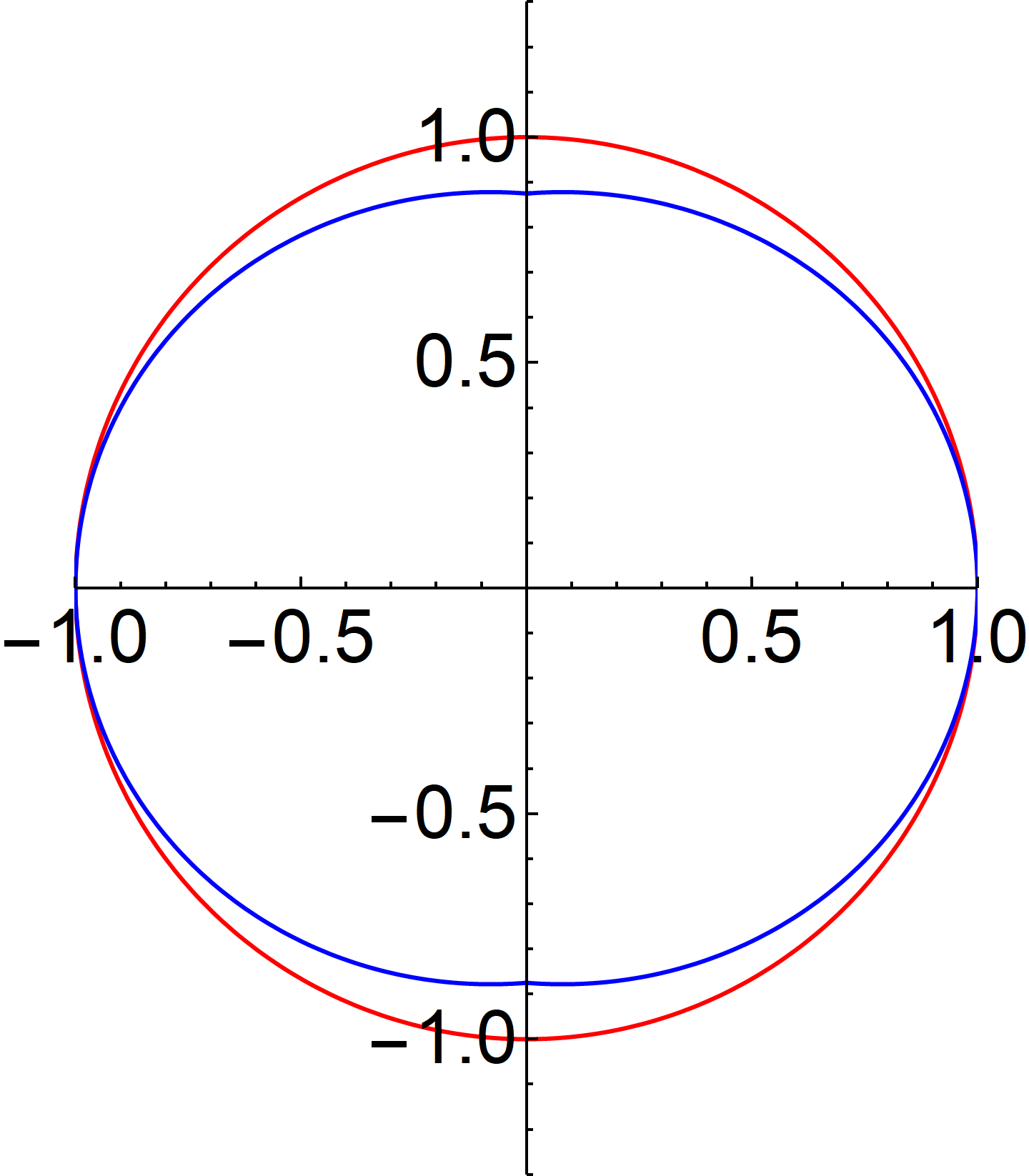}}
	}\quad
	\mbox{\subfigure[]
		{	\includegraphics[scale=0.2]{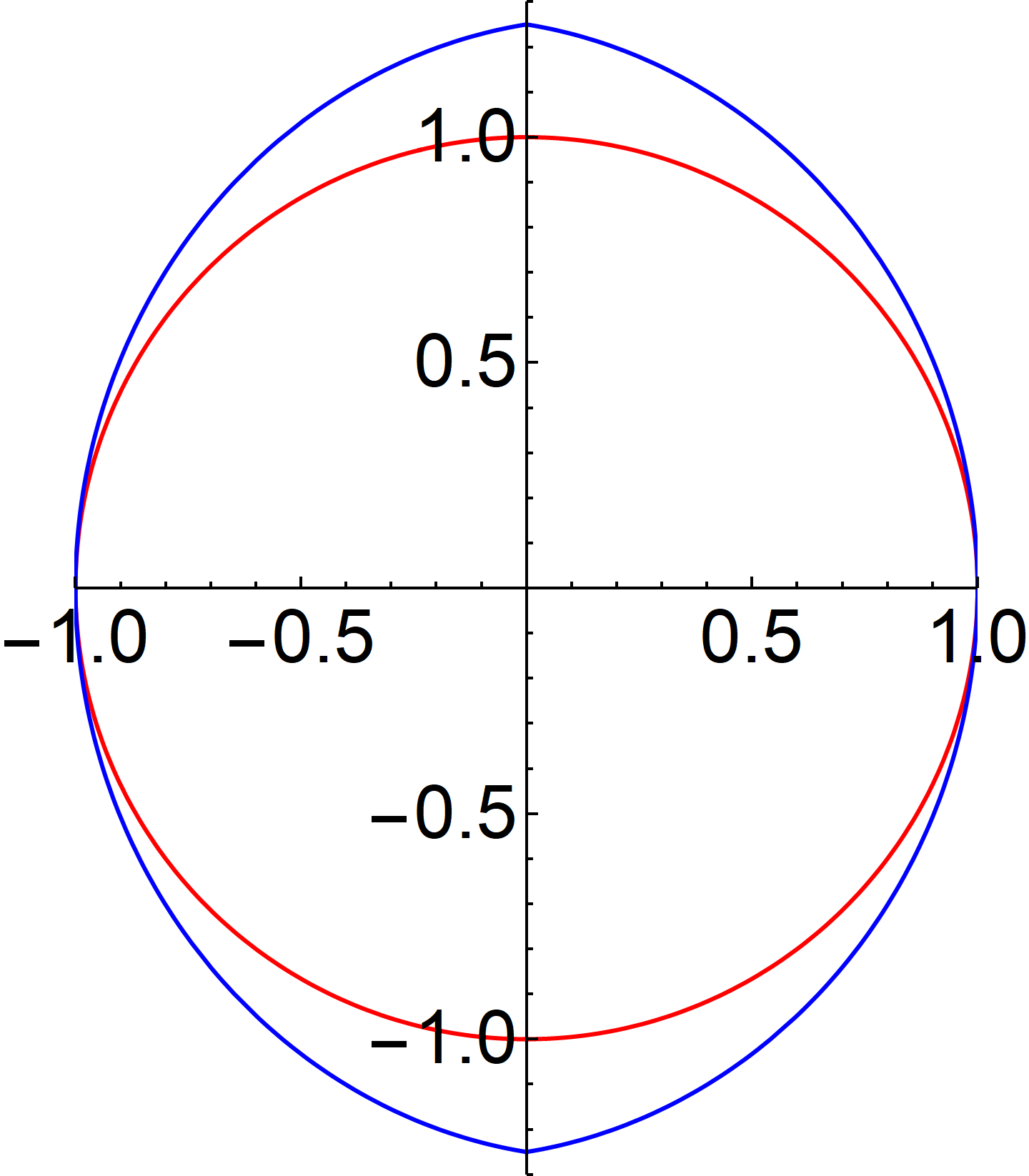}}
	}
	\vspace{0.5cm}
	\caption{Real part of the solution  $u(t)$ of the linear  Schr\"odinger equation \eqref{eq:slinear} with $g=\varphi_2/4$ and $f=0$. The graphs are ploted with $t=k\pi^2/2$, $k=0,2,4,6,9$.}
	\label{schr}
\end{figure}

\begin{figure}[H]
	\centering
	\mbox{\subfigure[]
		{	\includegraphics[scale=0.2]{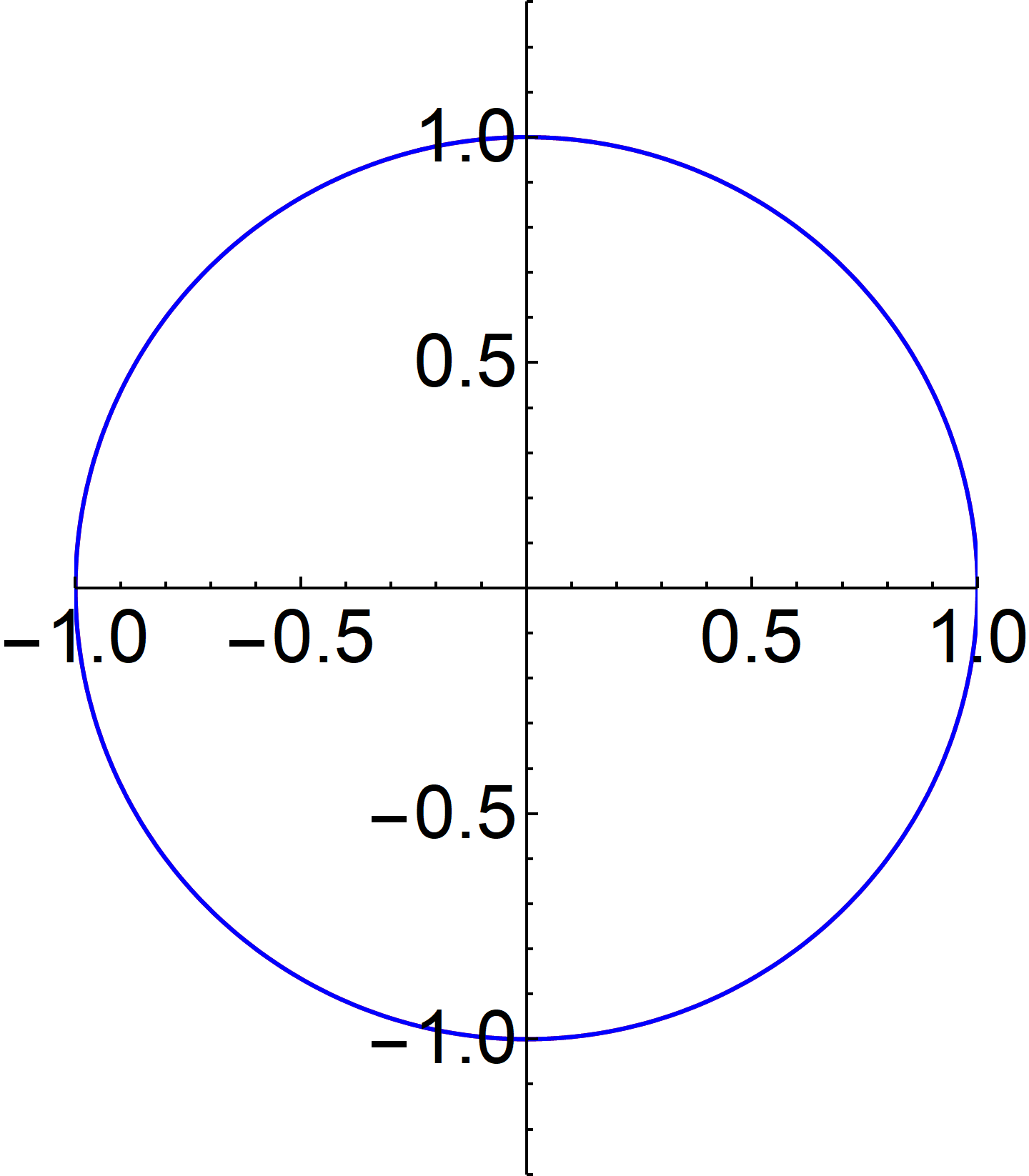}}
	}\quad
	\mbox{\subfigure[]
		{	\includegraphics[scale=0.2]{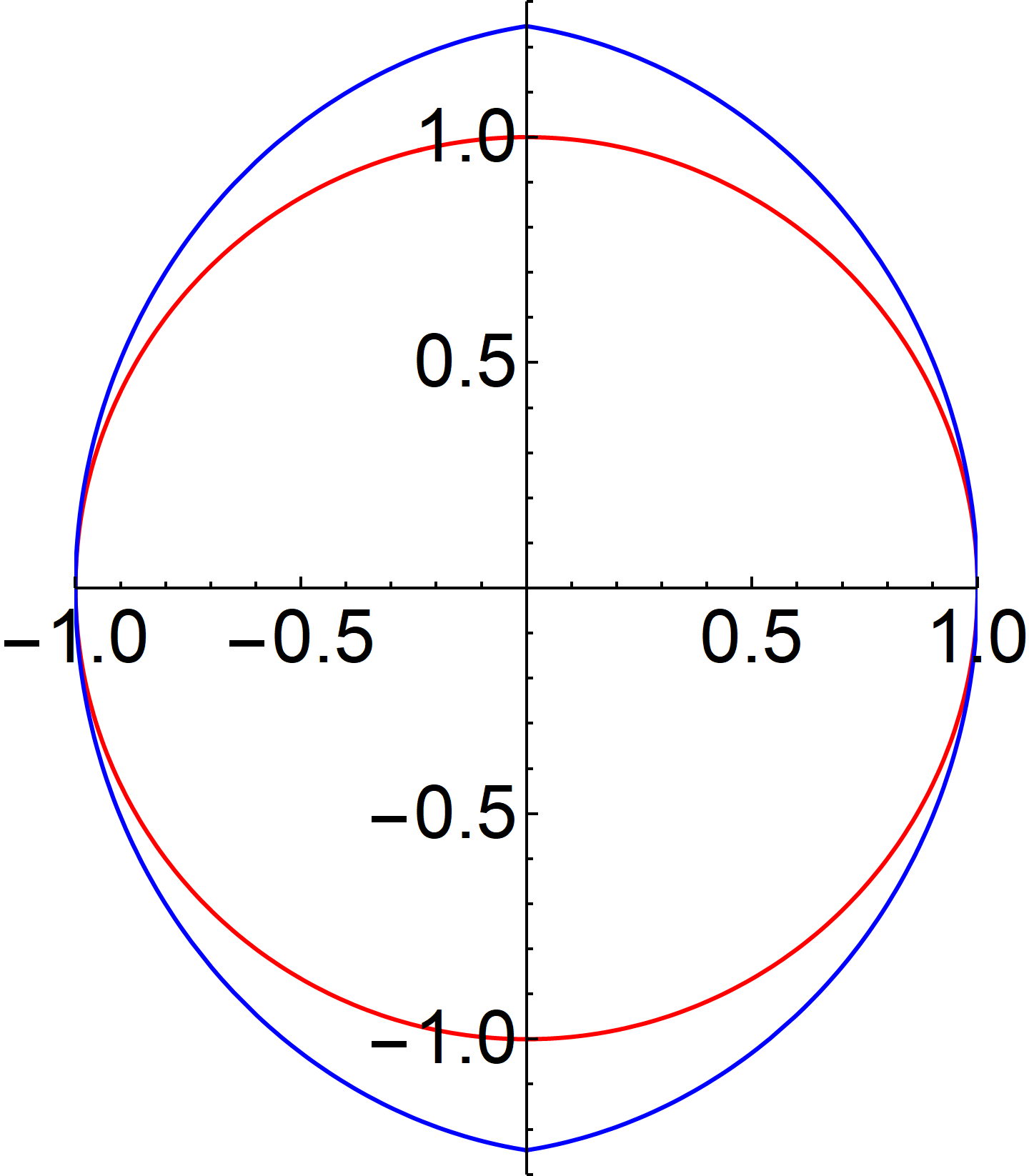}}
	}\quad
	\mbox{\subfigure[]
		{	\includegraphics[scale=0.2]{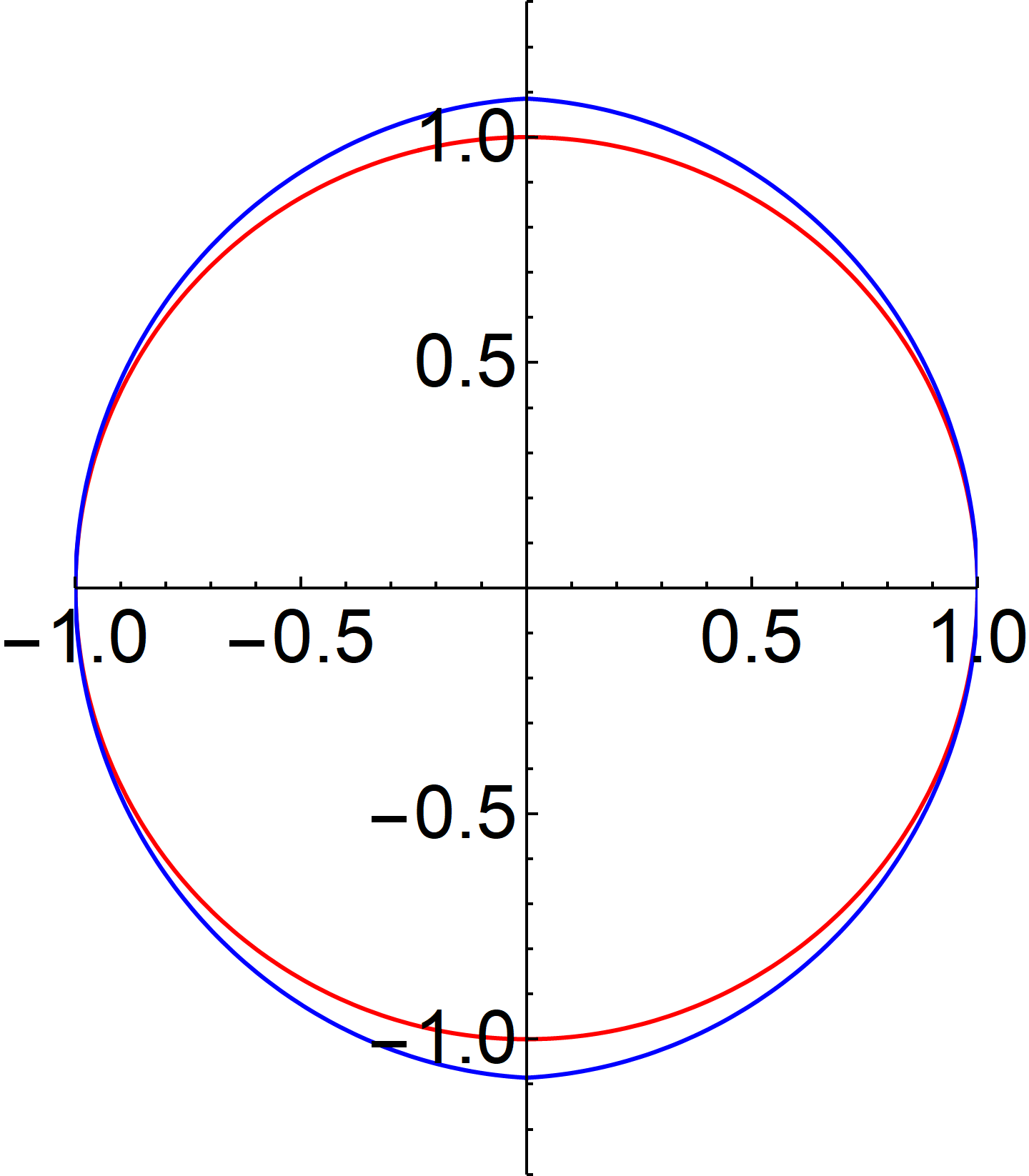}}
	}\quad
	\mbox{\subfigure[]
		{	\includegraphics[scale=0.2]{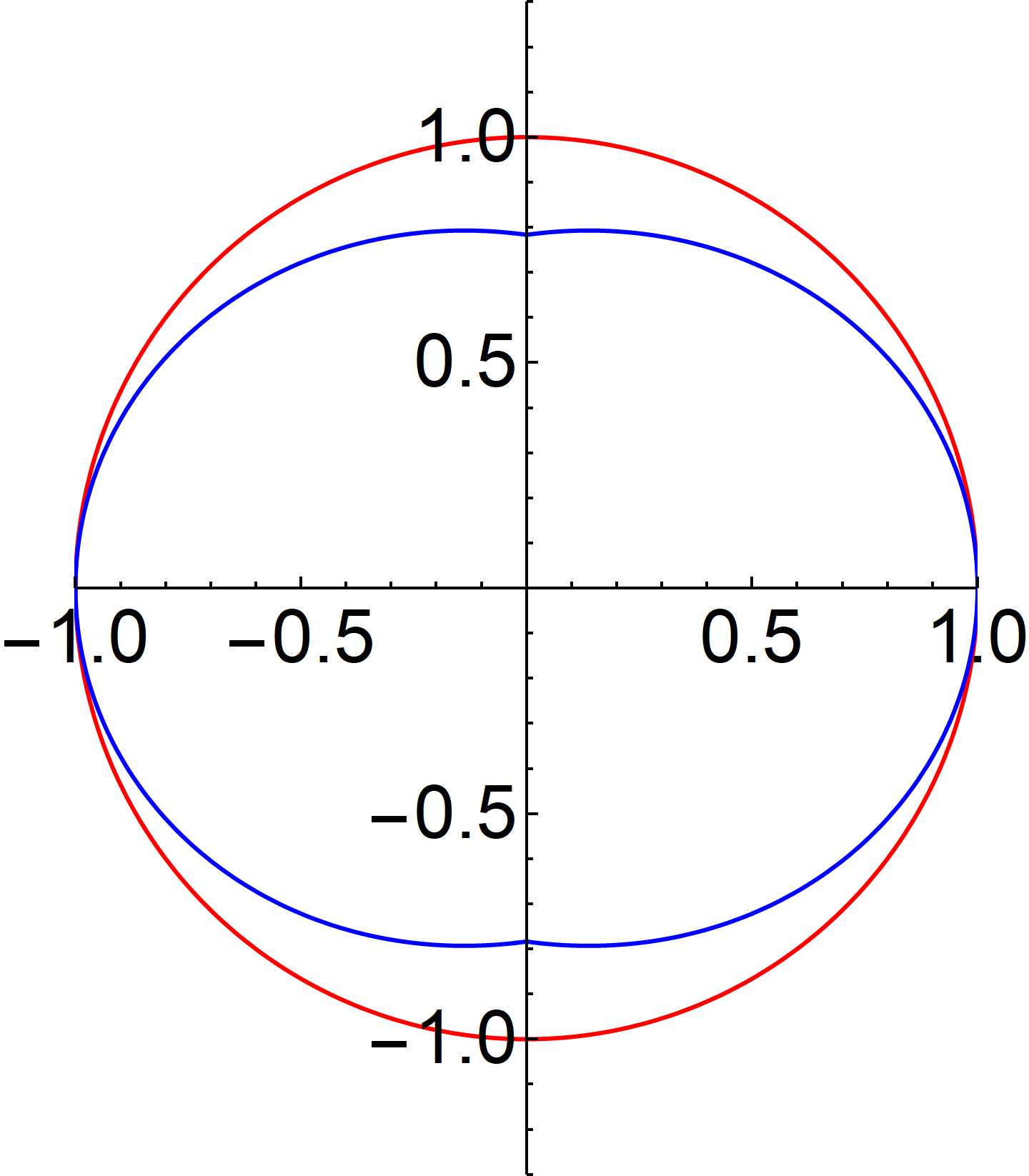}}
	}\quad
	\mbox{\subfigure[]
		{	\includegraphics[scale=0.2]{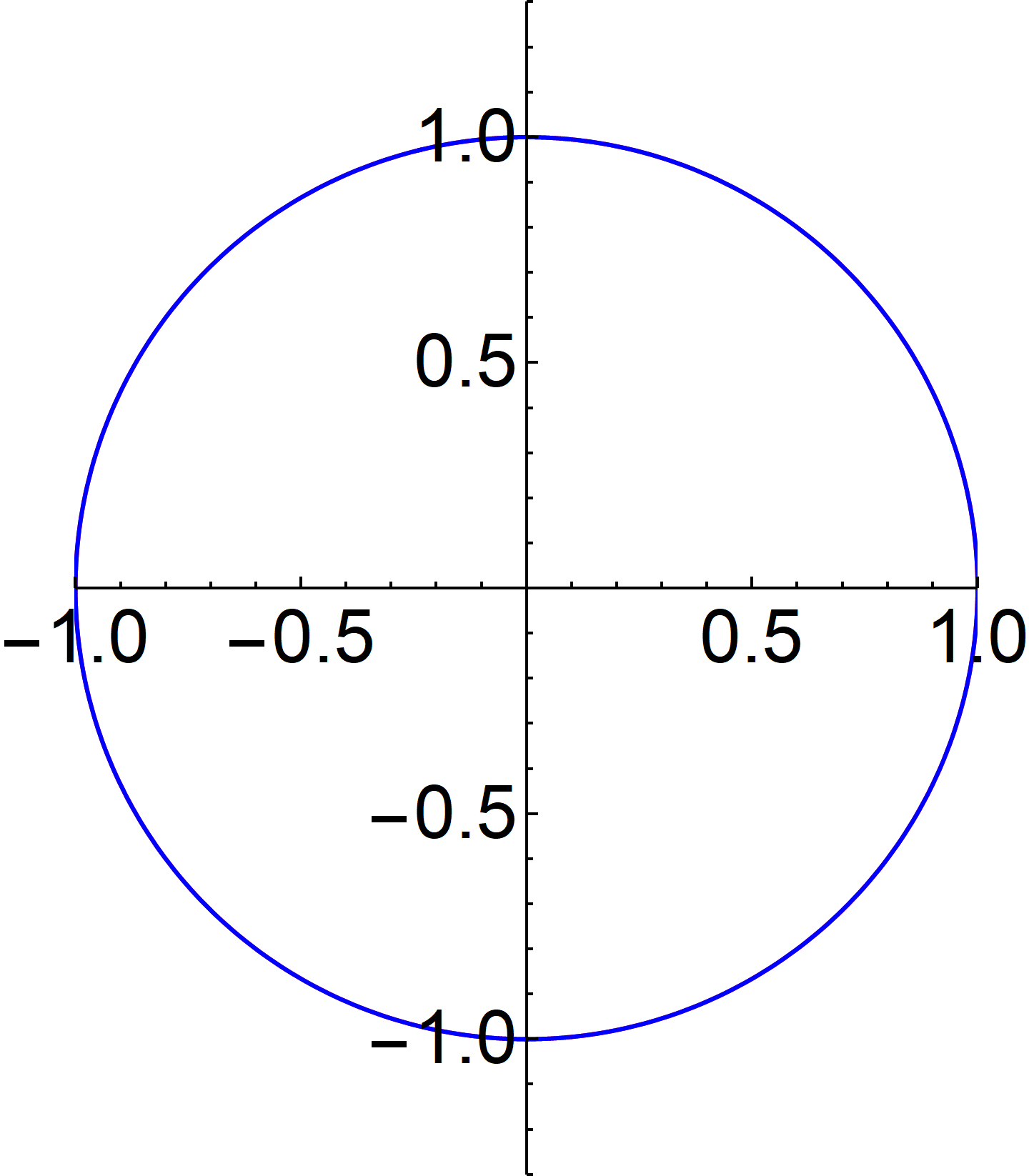}}
	}
	\vspace{0.5cm}
	\caption{Imaginary part of the solution  $u(t)$ of the linear  Schr\"odinger equation \eqref{eq:slinear}. The graphs are ploted by using the same initial data and $t$ values as in Figure.}
	\label{ima}
\end{figure}

\section*{Acknowledgement}
	
	Part of this work was carried out while the second author was visiting Beijing Institute of Mathematical Sciences and Applications (BIMSA). She thanks the institute for its hospitality and support.


\begin{thebibliography}{XXX}
		\bibitem{Adams_1975} R. A. Adams, {\em Sobolev Spaces},  Academic Press, New York, 1975.
		
		\bibitem{Ambjorn-Jurkiewicz-Loll_2005} J. Ambj\o rn, J. Jurkiewicz and R. Loll, 
		The spectral dimension of the universe is scale dependent, \textit{Phys. Rev. Lett.} \textbf{95} (2005), 171301.
		
		\bibitem{Benedetti-Henson_2009}D. Benedetti and J. Henson,
		Spectral geometry as a probe of quantum spacetime,
		{\em Phys. Rev. D} {\bf 80} (2009), 124036.
		
		\bibitem{Bird-Ngai-Teplyaev_2003}E. J. Bird, S.-M. Ngai, and A. Teplyaev, Fractal Laplacians on the unit interval, {\em Ann. Sci. Math. Qu$\acute{e}$bec} {\bf 27} (2003), 135--168.
		
		\bibitem{Calcagni_2010}G. Calcagni, Fractal universe and quantum gravity, {\em Phys. Rev. Lett.} {\bf 104} (2010), 251301.
		
		\bibitem{Calcagni_2011}G. Calcagni, Gravity on a multifractal, {\em Phys. Lett. B} {\bf 679} (2011), 251--253.
		
		\bibitem{Coletta-Dias-Strichartz_2004} K. Coletta, K. Dias, and  R. S. Strichartz,  Numerical analysis on the Sierpinski gasket, with applications to Schr\"odinger equations, wave equation, and Gibbs' phenomenon,  {\em Fractals} {\bf 12} (2004), 413--449.
		
		\bibitem{Chan-Ngai-Teplyaev_2015}J. F.-C. Chan, S.-M. Ngai, and A. Teplyaev, One-dimensional wave equations defined by fractal Laplacians, {\em J. Anal. Math.} {\bf127} (2015), 219--246.
		
		\bibitem{Chen-Ngai_2010}J. Chen and S.-M. Ngai, Eigenvalues and eigenfunctions of one-dimensional fractal Laplacians defined by iterated function systems with overlaps, {\em J. Math. Anal. Appl.} {\bf364} (2010), 222--241.
		
		\bibitem{Cawley-Mauldin_1992} R. Cawley and R. D. Mauldin, Multifractal decompositions of Moran fractals, {\em Adv. Math.} {\bf 92} (1992), 196--236.
		
		\bibitem{Davies_1995} E. B. Davies, {\em Spectral Theory and Differential Operators},  Cambridge Stud. Adv. Math., vol. 42, Cambridge Univ. Press, Cambridge, 1995.
		
		\bibitem{Deng-Ngai_2015} D.-W. Deng and S.-M. Ngai, Eigenvalue estimates for Laplacians on measure spaces, {\em J. Funct. Anal.} {\bf 268} (2015), 2231--2260.
		
		\bibitem{Edgar_1990} G. A. Edgar, {\em Measure, Topology, and Fractal Geometry}, Springer, New York, 1990.
		
		
		\bibitem{Freiberg_2003} U. Freiberg, Analytical properties of measure geometric Kre\u{\i}n-Feller-operators on the real line, {\em Math. Nachr.} {\bf 260} (2003), 34--47.
		
		\bibitem{Freiberg_2005} U. Freiberg, Spectral asymptotics of generalized measure geometric Laplacians on cantor like sets, {\em Forum Math.} {\bf 17} (2005), 87--104.
		
		\bibitem{Freiberg_2011} U. Freiberg, Refinement of the spectral asymptotics of generalized Kre\u{\i}n Feller operators. {\em Forum Math.} {\bf 23} (2011), 427--445.
		
		\bibitem{Freiberg-Zahle_2002} U. Freiberg and M. Z\"ahle,  Harmonic calculus on fractals---a measure geometric approach, {\em Potential Anal.} {\bf 16} (2002), 265--277.
		
		\bibitem{Gu-Hu-Ngai_2020}Q. Gu,  J. Hu,  and S.-M. Ngai, Geometry of self-similar measures on intervals with overlaps and applications to sub-Gaussian heat kernel estimates, {\em Commun. Pure Appl. Anal.} {\bf 19} (2020), 641--676.
		
		\bibitem{Green_1985}R. M. Green, {\em Spherical Astronomy}, Cambridge Univ. Press, Cambridge, 1985.
		
		\bibitem{Hu_2002} J. Hu, Nonlinear wave equations on a class of bounded fractal sets. {\em J. Math. Anal. Appl.}	{\bf 270} (2002), 657--680.
		
		\bibitem{Hutchinson_1981} J. E. Hutchinson, Fractals and self-similarity, {\em Indiana Univ. Math. J.} {\bf 30} (1981), 713--747.
		
		\bibitem{Hu-Lau-Ngai_2006} J. Hu, K.-S. Lau, and S.-M. Ngai, Laplace operators related to self-similar measures on $\mathbb{R}^d$, {\em J. Funct. Anal.} {\bf 239} (2006), 542--565.
		
		\bibitem{Kessebohmer-Niemann_2022-1}M. Kesseb$\ddot{o}$hmer and A. Niemann, Spectral asymptotics of Kre\u{\i}n-Feller operators for weak Gibbs measures on self-conformal fractals with overlaps, {\em Adv. Math.} {\bf403} (2022), Paper No. 108384.
		
		\bibitem{Kessebohmer-Niemann_2022-2}M. Kesseb\"ohmer and A. Niemann, Spectral dimensions of Kre\u{\i}n-Feller operators and $L^q$-spectra, {\em Adv. Math.} {\bf399} (2022), Paper No. 108253.
		
		\bibitem{Kessebohmer-Niemann_2022-3}M. Kesseb\"ohmer and A. Niemann, Spectral dimensions of Kre\u{\i}n-Feller operators in higher dimensions, arXiv: 2202.05247 (2022).
		
		\bibitem{Kigami_1989} J. Kigami, A harmonic calculus on the Sierpinski spaces, {\em Japan J. Appl. Math.} {\bf 6} (1989), 259--290.
		
		\bibitem{Kigami-Lapidus_1993} J. Kigami and M. L. Lapidus, Weyl's problem for the spectral distribution of Laplacians on p.c.f. self-similar fractals, {\em Comm. Math. Phys.} {\bf 158} (1993), 93--125.
		
		
		\bibitem{Mauldin-Williams_1988} R. D. Mauldin and S.C. Williams, Hausdorff dimension in graph directed constructions, {\em Trans. Am. Math. Soc.} {\bf 309} (1988), 811--829. 
		
		\bibitem{Naimark-Solomyak_1994} K. Naimark and M. Solomyak, On the eigenvalue behaviour for a class of operators related to self-similar measures
		on $\mathbb{R}^d$, {\em C. R. Acad. Sci. Paris S$\acute{e}$r. I Math.} {\bf 319} (1994), 837--842.
		
		\bibitem{Naimark-Solomyak_1995} K. Naimark and M. Solomyak, The eigenvalue behaviour for the boundary value problems related to self-similar measures on $\mathbb{R}^d$, {\em Math. Res. Lett.} {\bf 2} (1995), 279--298.
		
		\bibitem{Ngai_2011}S.-M. Ngai, Spectral asymptotics of Laplacians associated with one-dimensional iterated function systems with overlaps, {\em Canad. J. Math.} {\bf 63} (2011), 648--688.
		
		
		\bibitem{Ngai-Ouyang_2023} S.-M. Ngai and L. Ouyang, Kre\u{\i}n-Feller operators on Riemannian manifolds and a compact embedding theorem, arXiv:2301.06438 (2023).
		
		\bibitem{Ngai-Ouyang_2024} S.-M. Ngai and L. Ouyang, Hodge Theorem for Kre\u{\i}n-Feller operators on compact Riemannian manifolds, arXiv:2403.06182.
		
		\bibitem{Ngai-Tang_2019}S.-M. Ngai and W. Tang, Eigenvalue asymptotics and Bohr's formula for fractal Schr\"odinger operators, {\em Pacific J. Math.} {\bf 300} (2019), 83--119.
		
		\bibitem{Ngai-Tang_2023} S.-M. Ngai and  W. Tang, Schr\"odinger equations defined by a class of self-similar measures. {\em J. Fractal Geom.} {\bf 10} (2023), 209--241.
		
		\bibitem{Ngai-Tang-Xie_2018}S.-M. Ngai, W. Tang, and Y. Y. Xie, Spectral asymptotics of one-dimensional fractal Laplacians in the absence of second-order identities, {\em Discrete Cont. Dyn. Syst.} {\bf38} (2018), 1849--1887.
		
		\bibitem{Ngai-Tang-Xie_2020}S.-M. Ngai, W. Tang, and Y. Y. Xie, Wave propagation speed on fractals, {\em J. Fourier Anal. Appl.} {\bf 26} (2020), 1--38.
		
		\bibitem{Ngai-Xie_2020}S.-M. Ngai and Y. Y. Xie, Spectral asymptotics of Laplacians related to one-dimensional graph-directed self-similar measures with overlaps, {\em Ark. Mat.} {\bf 58} (2020), 393--435.
		
		\bibitem{Ngai-Xie_2021}S.-M. Ngai and Y. Y. Xie, Spectral asymptotics of Laplacians associated with a class of higher-dimensional graph-directed self-similar measures, {\em Nonlinearity} {\bf 34} (2021), 5375--5398.
		
		\bibitem{Ngai-Xu_2021} S.-M. Ngai and Y. Y. Xu, Existence of $L^q$-dimension and entropy dimension of self-conformal measures on Riemannian manifolds, {\em Nonlinear Anal.} {\bf 230} (2023), Paper No. 113226.
		
		\bibitem{Ngai-Xu_2023} S.-M. Ngai and Y. Y. Xu, Separation conditions for iterated function systems with overlaps on Riemannian manifolds, {\em J. Geom. Anal.} {\bf 33} (2023), no. 8, Paper No. 262.
		
		
		\bibitem{Olsen_1994} L. Olsen, {\em Random Geometrically Graph Directed Self-Similar Multifractals},
		Pitman Res. Notes Math. Ser., vol. 307, New York, 1994.
		
		\bibitem{Pinasco-Scarola_2019} J. P. Pinasco and C. Scarola, Eigenvalue bounds and spectral asymptotics for fractal Laplacians, {\em J. Fractal Geom.} {\bf6} (2019), 109--126.
		
		\bibitem{Pinasco-Scarola_2021}J. P. Pinasco and C. Scarola, A nodal inverse problem for measure geometric Laplacians, {\em Commun Nonlinear Sci. Numer. Simul.} {\bf 94} (2021), 105542.
		
		
		\bibitem{Tang-Ngai_2022}W. Tang and S.-M. Ngai, Heat equations defined by a class of self-similar measures with overlaps, {\em Fractals} {\bf 30} (2022), Paper No. 2250073.
		
		\bibitem{Tang-Wang_2022} W. Tang and Z. Y. Wang, Strong damping wave equations defined by a class of self-similar measures with overlaps, {\em J. Anal. Math.} {\bf 150} (2023), 249--274.
		
		
		
		\bibitem{Wloka_1987} J. Wloka, {\em Partial Differential Equations}, Cambridge University Press, Cambridge, 1987.
		
	\end{thebibliography}
\end{document}